\newtheorem{theorem}{Theorem}[section]
\newtheorem{lemma}[theorem]{Lemma}
\newtheorem{definition}[theorem]{Definition}
\newtheorem{example}[theorem]{Example}
\newtheorem{corollary}[theorem]{Corollary}
\newtheorem{proposition}[theorem]{Proposition}
\newcommand{\keywords}[1]{\par\medskip\noindent\textbf{Keywords:} #1}
\newcommand{\amsclass}[1]{\par\medskip\noindent\textbf{MSC 2020:} #1}
\newcommand{\dashmapsto}{\mapstochar\dashrightarrow}
\tikzset{%
    symbol/.style={%
        draw=none,
        every to/.append style={%
            edge node={node [sloped, allow upside down, auto=false]{$#1$}}}
    }
}
\tikzset{LA/.style = {
                      line width=#1, -{Straight Barb[length=3pt]}},
         LA/.default=1pt
        }
\tikzset{
  Earrow/.style={
    line width=0.4pt,
    double,
    -{Straight Barb[scale=1.0]},
    double distance=1.2pt
  }
}
  \tikzset{Tarrow/.style = {
                      line width=1.5pt, -{Straight Barb[length=3pt]}}
        }
\newcommand{\Rel}{\mbox{\bf Rel}}
\newcommand{\RelMon}{\mbox{\bf RelMon}}
\newcommand{\RelFA}{\mbox{\bf RelFA}}
\newcommand{\Hom}{\mbox{\rm Hom}}
\newcommand{\EA}{\mbox{\bf EA}}
\newcommand{\PEA}{\mbox{\bf PEA}}
\newcommand{\POG}{\mbox{\bf POG}}
\newcommand{\Set}{\mbox{\bf Set}}
\newcommand{\eSSet}{\mathrm{Set}^\epsilon_\Delta}
\newcommand{\SSet}{\mathrm{Set}_\Delta}
\newcommand{\boxprod}{\mathrel{\Box}}
\newcommand{\Cell}{\mathrm{Cell}}
\newcommand{\Horn}{\mathrm{Horn}}
\newcommand{\relto}{\mathrel{\dashmapsto}}
\newcommand*\Sim{\includegraphics[scale=0.26]{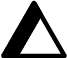}}
\newcommand{\eps}{{\bf \epsilon}}
\DeclareRobustCommand{\ELambda}{\text{\reflectbox{$\bf\Lambda$}}}
\def\dashmapsto{\mapstochar\dashrightarrow}
\newcommand{\markindex}[1]{\mathpalette\markindexhelper{#1}} 
\newcommand{\markindexhelper}[2]{%
  \fcolorbox{gray}{gray!20}{$\mathsurround=0pt#1#2$}%
}
\title{Simplicial Perspectives on (Pseudo) Effect Algebras}
\author{Dominik Lachman\thanks{Department of Algebra and Geometry,\\
Palacký University Olomouc,\\
17. Listopadu 12,\\
Czech Republic,\\
\texttt{dominik.lachman@upol.cz}}%
\space\thanks{Author acknowledge the support by the Czech Science Foundation (GAČR): project 24-14386L}} 
\date{}
\begin{document}

\maketitle  

\begin{abstract}
The study of Frobenius algebras in the category $\mathbf{Rel}$ via their nerve functor into simplicial sets has been introduced recently.
In this article, we focus on the particular case of effect algebras and pseudo effect algebras and investigate these using tools from combinatorial topology. 
To each (pseudo) effect algebra $E$, we associate a simplicial set with edge marking $N(E)$, called an $\epsilon$-simplicial set, 
and analyze its structural properties. In particular, we provide certain characterizations of effect algebras, orthoalgebras, and orthomodular posets 
among Frobenius algebras in $\mathbf{Rel}$. We show that the universal group $\mathrm{Gr}(E)$ of an effect algebra $E$ 
coincides with the first homology group of $N(E)$.  
For a pair of effect algebras $E,F$, we study the mapping space $[N(E),N(F)]$ and prove that the category of effect algebras 
can be enriched over the category of Frobenius algebras in $\mathbf{Rel}$. We extend this result to the category of pseudo effect algebras. Given pseudo effect algebras $E,F$, 
for the initial pseudo effect algebra $\underline{1}$, we show that the unique morphism $\underline{1}\to E$ induces a Kan fibration  
\(
[N(E),N(F)] \;\longrightarrow\; [N(\underline{1}),N(F)].
\)
We discuss how this result captures several structural features of conjugations 
in the theory of pseudo effect algebras.
\end{abstract}
\keywords{Frobenius algebras, quantum logic, simplicial sets, category of relations}
\amsclass{03G12, 18N50, 18B10, 08A55}

\section*{Introduction}
Frobenius algebras---a classical concept in representation theory---are algebraic structures that admit both a monoid and a comonoid 
structure, tied together by the Frobenius identity. Following this definition, one may consider Frobenius algebras internal to any 
monoidal category $\mathcal{C}$. A prime example comes from the categorical approach to quantum information theory, where we model quantum processes 
in a dagger compact category $\mathcal{C}$ and internal Frobenius algebras play an essential role. 
Standard examples of dagger compact categories (the two running examples in~\cite{HV}) are
the category of finite-dimensional Hilbert spaces $\mathbf{Hilb}$ and the category of sets and relations $\mathbf{Rel}$. 

In this article, we concentrate on $\Rel$. It turns out that several well-studied structures can be organized as Frobenius 
monoids in $\Rel$: groupoids~\cite{CCH} (small non-empty categories of isomorphisms), effect algebras~\cite{PS2016} 
(fundamental structures in algebraic quantum logic), and coherent configurations~\cite{JJL}.  
These examples combine both algebraic and combinatorial aspects: the first two are partial monoids, while the latter lead to hypermonoids 
(multi-valued monoids). One of our main aims is to develop the combinatorial side of the coin. Our results show that certain essential 
properties of Frobenius algebras are inherently combinatorial. 
Our basic tool is the theory of simplicial sets $\SSet$, more concretely we will use so-called $\epsilon$-simplicial sets,
which are simplicial sets where some edges are marked. The corresponding category we denote as $\eSSet$. 
(Note that $\eSSet$ should not be confused with the category of marked simplicial sets $\SSet^+$, used in the study of Cartesian fibrations, even though the underlying idea is similar.)
The bridge between $\RelFA$ 
and $\eSSet$ is the fully faithful nerve functor
\begin{equation}\label{eq:eNerv}
    N\colon\RelFA \to \eSSet.
\end{equation} 
The study of groupoids (and more generally categories) from this perspective is classical~\cite{L}. The case of general $\RelFA$ was 
first considered in~\cite{MZ2020} (see also~\cite{CMS} and~\cite{OCS}). 
A slightly different approach is adopted in~\cite{My}. The main result in~\cite{My} provides a description of the image of the nerve 
functor~\eqref{eq:eNerv} in terms of right lifting properties. Hence, we can think of relational Frobenius algebras as certain fibrant 
objects in $\eSSet$. This result serves as our springboard for shifting from algebraic to combinatorial analysis. 
The result is analogous to the characterization of the image of groupoids as the simplicial sets admitting 
unique filling of each horn $\Lambda^n_i\subset\Delta^n$, $0\leq i\leq n$.

In this article, we will be mostly concerned with effect algebras, forming a category denoted $\EA$, and their non-commutative variant, pseudo effect algebras, forming 
a category denoted $\PEA$. However, many of our results are formulated for more general classes of $\RelFA$. 
Effect algebras are a central concept in algebraic quantum logic and were introduced by Foulis and Bennet~\cite{BF} in the 1990s, 
although essentially equivalent notions had appeared earlier (see~\cite{GG89}). Effect algebras are usually understood as algebraic 
structures and investigated by algebraic means; nevertheless, their analysis has traditionally been grounded in the theory of 
effect test spaces, developed by Foulis and Randall, which offer a combinatorial viewpoint.
In~\cite{F}, Foulis observed that classical test spaces associated with orthoalgebras (certain special class of effect algebras) are essentially simplicial complexes, 
and he encouraged further development of this connection with combinatorial topology. This paper can be seen as an elaboration of that 
program. However, instead of simplicial complexes, we use simplicial sets, and elements of an effect algebra $E$ correspond to edges of the associated simplicial set $N(E)$.

The rough motivation for the simplicial analysis of $\RelFA$ is that the category of simplicial sets has a very rich structure. 
Its theory is well developed and provides a wide range of tools. 
We demonstrate, through the following three main theorems, that this language is particularly natural for (pseudo) effect algebras.
\begin{itemize}
    \item In the theory effect algebras, there is an important construction of the so-called universal group, which is a functor $U\colon \EA\to \mathbf{Ab}$ to Abelian groups. 
    More precisely, the functor is taken to a certain category of po-groups. We show that the functor $U$ splits as $H_1\circ N$. 
    That is, $U(E)$ coincides with the first homology group of $N(E)$. While the observation is straightforward, 
    it appears to be new in the community around effect algebras.  
    \item Given $E,F\in\EA$, there is no obvious structure on $\mathrm{Hom}_{\EA}(E,F)$; it is merely a set. 
    In contrast, under passage to $\eSSet$, the hom-space $\mathrm{Map}(N(E),N(F))$ acquires a natural simplicial structure, 
    and it turns out that there exists $G\in\RelFA$ such that 
    $$N(G)\cong \mathrm{Map}(N(E),N(F)).$$ Consequently, we can arrange
    effect algebras as a category enriched over $\RelFA$.
    This is further generalized to the pseudo effect algebras.
    \item Given two pseudo effect algebras $E, F$, the vertices of the mapping space $$\mathrm{Map}(N(E),N(F))$$
    correspond to partial monoid morphisms (ignoring some additional structure), and edges correspond to conjugations between partial monoid morphisms. 
    It turns out that for a given partial monoid morphism $f\colon E\rightarrow F$, the data concerning conjugations of $f$ 
    are to a great extent determined by data about conjugations of the element $f(1)$. We make this observation precise 
    by proving that for pseudo effect algebras $E,F$ and the initial object 
$\underline{1}\in\PEA$, the restriction functor
\begin{equation}
    \mathrm{Map}(N(E),N(F)) \to \mathrm{Map}(N(\underline{1}),N(F))
\end{equation}
is a Kan fibration which also has the right lifting property with respect to $\partial\Delta^n\subset\Delta^n$, $n\geq 2$.  
\end{itemize}

The article is organized as follows.  
In Section~1 we recall the basic concepts: simplicial sets, enriched categories, Frobenius algebras, and the nerve functor.  
Section~2 gives the first taste of simplicial combinatorics in relation to our algebraic setting: we study the internal relation $\boxslash$ and provide a new characterization of orthoalgebras and orthomodular algebras.  
Section~3 shows that the well-known construction of the universal group of an effect algebra $E$ is essentially the first homology group of the associated simplicial set $N(E)$. 
Section~4 contains one of our main results: we study the mapping space $\mathrm{Map}(N(E),N(F))$ for effect algebras $E$ and $F$. 
We show that $\EA$ can be viewed as an $\RelFA$-enriched category.
Section~5 provides several technical lemmas needed in the sequel.  
Section~6 explores the mapping space $\mathrm{Map}(N(E),N(F))$ for pseudo effect algebras. We establish here the aforementioned main results. 

We note that some of the results in the Section~6 are presented strictly in the combinatorial language and are therefore considerably stronger than
the mentioned consequences for pseudo effect algebras. Similarly, just as results about $\infty$-groupoids are stronger than their implications for ordinary groupoids, our findings here have a similar flavor.

 We assume the reader is familiar with the fundamentals of category theory, 
but we do not presuppose advanced knowledge of simplicial sets, which is generally uncommon in the community of effect algebra 
researchers. Consequently, some arguments and proofs are presented in full detail, even when they might be considered routine or 
folklore by experts in category theory and simplicial sets.


\section{Preliminaries}
\subsection{Simplicial sets and $\epsilon$-simplicial sets}
Let us briefly recall the notion of simplicial sets. By $\Delta$ we denote the category whose objects are finite ordinals 
$[n] = \{0,1,\ldots,n\}$ and whose morphisms are order-preserving maps $f\colon [m]\to [n]$.
A functor (presheaf) $X\colon \Delta^{op}\to \Set$ is called a \emph{simplicial set}. 
All simplicial sets together with natural transformations form the category of simplicial sets, denoted by $\SSet$. 

For each object $[n]\in \Delta$, the representable simplicial set $\mathrm{Hom}_\Delta([n],-)$ is the standard $n$-simplex, 
denoted by $\Delta^n$. For $X\in\SSet$, we call the elements of $X_n = X([n])$ the \emph{$n$-simplices} of $X$. 
In particular, the $0$-simplices are called \emph{vertices}, and the $1$-simplices are called \emph{edges}. 
By the Yoneda lemma, there is a natural bijection between the set $X_n$ of $n$-simplices and the set of morphisms $\Delta^n \to X$. 
Following this bijection, we will sometimes identify these two sets.

Let $0\leq i\leq n$. We denote by $\delta^n_i\colon [n-1]\to [n]$ the unique injective morphism whose image does not contain $i$, 
and by $\sigma^n_i\colon [n+1]\to [n]$ the unique surjective morphism such that $i\in[n]$ has two preimages. 
For a simplicial set $X$, the map $d^n_i := X(\delta^n_i)\colon X_n \to X_{n-1}$ is called a \emph{face map}, 
and the map $s^n_i := X(\sigma^n_i)\colon X_n \to X_{n+1}$ is called a \emph{degeneracy map}. 
For an $n$-simplex $x\in X_n$, the language of face and degeneracy maps allows us to refer to its codimension-one faces 
$d^n_0(x),\ldots,d^n_n(x)$ and, similarly, to the $n+1$ degenerate $(n+1)$-simplices $s^n_0(x),\ldots,s^n_n(x)$ 
that naturally arise from $x$.

The Yoneda embedding $y\colon \Delta\to\SSet$ sends $\delta^n_i\colon [n-1]\to [n]$ to 
$y(\delta^n_i)\colon \Delta^{n-1}\hookrightarrow\Delta^n$, whose image we denote by $\partial_i\Delta^n$. 
Geometrically, $\partial_i\Delta^n$ is the face opposite the $i$-th vertex. 
Similarly, $y(\sigma^n_i)\colon \Delta^{n+1}\to\Delta^n$ corresponds geometrically to the retraction that collapses 
the edge between the $i$-th and $(i+1)$-th vertices. 
The Yoneda embedding gives us the following commutative diagram:
\begin{equation}
\begin{tikzcd}[row sep=25pt, column sep=40pt]
    \Delta^{n-1}\arrow[r,hook, "y(\delta^n_i)"]\arrow[rd,"d^n_i(x)"']&
    \Delta^n\arrow[d,"x"]&
    \Delta^{n+1}\arrow[ld,"s^n_i(x)"]\arrow[l, "y(\sigma^n_i)"']\\
    &X&
\end{tikzcd}
\end{equation}

Colimits and limits in $\SSet$ (as in any presheaf category) are computed objectwise. 
Similarly, for subsimplicial sets we will use the familiar set--theoretic operations $\cup$ and $\cap$, computed objectwise. 
We use the following standard notation for boundaries and horns: for all $0\leq i\leq n$,
\begin{equation}\label{eq:simpxAndHorn}
    \partial\Delta^n \;=\; \bigcup_{i=0}^n \partial_i\Delta^n,\quad\quad \Lambda^n_i      \;=\; \bigcup_{j=0,\, j\neq i}^n \partial_j\Delta^n.
\end{equation}
If the simplex $\Delta^n$, $n\geq 0$, is clear from the context, we will often address its faces as $\Delta^I$ for $I\subseteq\{0,\ldots,n\}$. 
Here $\Delta^I\subseteq \Delta^n$ denotes the face whose vertices belong to $I$. 
We also denote 
\begin{align*}
  \Cell         &= \{\,\partial\Delta^n \hookrightarrow \Delta^n \mid n \geq 0 \,\}, &
  \Horn         &= \{\,\Lambda_i^n \hookrightarrow \Delta^n \mid n \geq 1,\; 0\leq i \leq n \,\}, \\
  \Cell_{\geq k}&= \{\,\partial\Delta^n \hookrightarrow \Delta^n \mid n \geq k \,\}, &
  \Horn_{\geq k}&= \{\,\Lambda_i^n \hookrightarrow \Delta^n \mid n \geq k,\; 0\leq i \leq n \,\}.
\end{align*}
We will also adopt the following relaxed notation: if $\sigma$ is a simplex isomorphic to $\Delta^n$ and $x$ is a vertex of $\sigma$, 
then we denote the face of $\sigma$ opposite to $x$ by $\partial_x\sigma$.

For our purposes we will need slightly richer structures than simplicial sets. 
In particular, we want to distinguish certain edges by marking. 
To achieve this, we consider the category $\Sim$ freely generated by $\Delta$ and an arrow $[1]\to [\epsilon]$, 
where $[\epsilon]$ is an additional object. A presheaf on $\Sim$ is called an \emph{$\epsilon$-simplicial set}. 
By $\eSSet$ we denote the category of all $\epsilon$-simplicial sets together with natural transformations. 
Note that an $\epsilon$-simplicial set $X$ is essentially a simplicial set together with a map $X(\epsilon)\to X(1)$. 
We call the edges in the image of this map \emph{$\epsilon$-edges}, and the elements of $X(\epsilon)$ we call \emph{witnesses}.

By $\Sim^n$ we denote the standard $n$-simplex $\Delta^n$ where the edge $\Delta^{\{0,n\}}$ between the first and the last vertex is an $\epsilon$-edge.
For each $0\leq i\leq n$, the $\epsilon$-simplicial set $\partial_i\Sim^n$ equals $\partial_i\Delta^n$ with the edge $\Delta^{\{0,n\}}$ being an $\epsilon$-edge, 
whenever it belongs to $\partial_i\Delta^n$ (i.e., $i\neq 0,n$).
Following the same pattern, we refer to the faces of $\Sim^n$, $n\geq 0$, using the notation $\Sim^I$, $I\subseteq\{0,\ldots,n\}$. For our purposes, the following simplicial sets are important
\begin{equation}
    \ELambda^n_0=\bigcup_{i=1}^n \partial_i\Sim^n,\quad\quad \ELambda^n_n=\bigcup_{i=0}^{n-1} \partial_i\Sim^n,
\end{equation} 
we call them \emph{$\epsilon$-horns}.

Working with diagrams, we will mark the $\epsilon$-edges by thick arrows. For example, the following diagram are $\Sim^n$, for $n=1,2,3$:
\[
\begin{tikzcd}
  0\arrow[r,Tarrow]&1  
\end{tikzcd}\quad\quad
\begin{tikzcd}[row sep=20pt, column sep=15pt]
    &2&\\
    0\arrow[rr]\arrow[ru,Tarrow]&&1\arrow[lu]
\end{tikzcd}
\quad\quad
    \begin{tikzcd}[row sep=10pt, column sep=20pt]
  & 3 & \\
  &[20pt] & \\
  & 2 \arrow[uu] & \\
  0 
    \arrow[rr, bend right=5] 
    \arrow[ruuu, Tarrow, bend left=5] 
    \arrow[ru] 
  & & 
  1 
    \arrow[lu] 
    \arrow[luuu, bend right=5]
\end{tikzcd}
\]

The product of two standard simplices $\Delta^n\times\Delta^m$ is the simplicial set whose $k$-simplices correspond to poset morphisms $[k]\to [m]\times [n]$. 
Consequently, $\Delta^n\times\Delta^m$ has dimension $m+n$, and the number of its top-dimensional simplices is \(\binom{m+n}{m}\).

\subsection{Lifting properties}
Let $f\colon A \to B$ and $g\colon X \to Y$ be morphisms in a category $\mathcal{C}$. 
We write
\(
f \,\boxslash\, g
\)
if, for every commutative square
\begin{equation}\label{eq:liftProb}
\begin{tikzcd}
A \arrow[r] \arrow[d,"f"'] & X \arrow[d,"g"] \\
B \arrow[r] \arrow[ru,dotted,"l"] & Y
\end{tikzcd}
\end{equation}
there exists a lift $l\colon B \to X$. 
In this case, $f$ is said to have the \emph{left lifting property} (LLP) with respect to $g$, 
and $g$ the \emph{right lifting property} (RLP) with respect to $f$. 
If the lift in~\eqref{eq:liftProb} is unique, we say that $f$ (resp.~$g$) has the \emph{unique} LLP (resp.~RLP).  
If $Y$ is the terminal object and $g$ the unique morphism $X \to Y$, we write $f \,\boxslash\, X$. 
If, moreover, $f$ is a monomorphism, we sometimes say that $X$ \emph{admits a filling of} $A$.

\begin{example}
A morphism of simplicial sets $f\colon X \to Y$ is a Kan fibration if $\mathrm{Horn}\boxslash f$.  
If $f$ has the unique RLP with respect to $\mathrm{Horn}$, we call it a minimal Kan fibration.
\end{example}

For a class of morphisms $\mathcal{K}$ we denote by
\[
\overline{\mathcal{K}} \;=\; {}^\boxslash(\mathcal{K}^\boxslash)
\]
the closure of $\mathcal{K}$ with respect to the Galois connection induced by $\boxslash$.
The class $\overline{\mathcal{K}}$ is closed under pushouts, retracts, and (transfinite) compositions; 
a class of morphisms with this property is called \emph{weakly saturated}. 
For many reasonable categories (such as $\SSet$ and $\eSSet$) the closure operator with respect to these operations
coincides with~$\overline{(-)}$.

For $X\in\SSet$ and integer $n\geq 0$, we denote by $\mathrm{sk}_n X$ the simplicial set obtained from $X$ by omitting all simplices of dimension greater than $n$. 
Hence, for $l\geq n$, each $l$-simplex of $\mathrm{sk}_nX$ is degenerate. 

The following lemma is well known.

\begin{lemma}\label{lem:fillingCells}
    Let $X,Y\in \eSSet$ and $n\geq 0$. If $\mathrm{sk}_n Y\subseteq X\subseteq Y$, then $X\hookrightarrow Y \in \overline{\Cell_{\geq n+1}}$. 
\end{lemma}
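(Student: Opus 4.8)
The plan is to run the standard skeletal-filtration argument, deferring the edge markings to the end. Introduce the increasing filtration of $\epsilon$-simplicial sets
\[
X = X_n \subseteq X_{n+1} \subseteq X_{n+2} \subseteq \cdots, \qquad X_m := X \cup \mathrm{sk}_m Y,
\]
where unions and skeleta are formed objectwise. Since $\mathrm{sk}_n Y \subseteq X$ we have $X_n = X$, and since every simplex of $Y$ lies in a finite skeleton, $\bigcup_m X_m = Y$. Hence $X \hookrightarrow Y$ is the (countable) transfinite composition of the inclusions $X_{m-1} \hookrightarrow X_m$ for $m \geq n+1$. Because $\overline{\Cell_{\geq n+1}}$ is weakly saturated, and so closed under transfinite composition, it suffices to show each single step $X_{m-1} \hookrightarrow X_m$ lies in $\overline{\Cell_{\geq n+1}}$.

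First I would realize each step as one pushout of a coproduct of boundary inclusions. Let $S_m$ denote the set of non-degenerate $m$-simplices of $Y$ that do not lie in $X$. Every face of such a $y \in S_m$ is an $(m-1)$-simplex of $Y$, hence lies in $\mathrm{sk}_{m-1} Y \subseteq X_{m-1}$, so $y$ determines an attaching map $\partial\Delta^m \to X_{m-1}$. The claim is that the square
\[
\begin{tikzcd}
\coprod_{y \in S_m} \partial\Delta^m \arrow[r] \arrow[d] & X_{m-1} \arrow[d] \\
\coprod_{y \in S_m} \Delta^m \arrow[r] & X_m
\end{tikzcd}
\]
is a pushout, with the lower map sending the $y$-th copy of $\Delta^m$ onto $y$. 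This is the usual bookkeeping: by the Eilenberg--Zilber lemma every simplex of $Y$ is uniquely a degeneracy of a non-degenerate one, so the simplices of $X_m$ not already in $X_{m-1}$ are exactly the elements of $S_m$ together with their degeneracies, and these are precisely what the pushout freely attaches along the interiors of the $\Delta^m$'s. Each attached cell $\partial\Delta^m \hookrightarrow \Delta^m$ has $m \geq n+1$, so it belongs to $\Cell_{\geq n+1}$; a coproduct of such maps (itself a transfinite composite of pushouts of the summands) followed by a pushout stays in $\overline{\Cell_{\geq n+1}}$, settling the step.

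The point that needs genuine care --- and where I expect the real work to sit --- is the interaction with the markings, since colimits in $\eSSet$ are computed objectwise and must reproduce the witnesses of $Y$ without manufacturing new ones. The key observation is that the representable $\Delta^m$ carries no witnesses, $\Delta^m(\epsilon) = \emptyset$, so the objectwise pushout adds no witnesses at all: the $\epsilon$-part of $X_m$ agrees with that of $X_{m-1}$, and every edge of $X_m$ keeps exactly the marking it had in $X_{m-1}$. For $m \geq 2$ this is consistent with $Y$, because every edge of an attached $m$-simplex is a $1$-face lying in its boundary $\partial\Delta^m$, hence already sits in $\mathrm{sk}_1 Y \subseteq X_{m-1}$ with its correct marking, which the attaching map transports faithfully. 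In particular, once $n \geq 1$ every step has $m \geq 2$, and the argument above is complete verbatim.

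It remains to dispose of the one anomalous step $X \hookrightarrow X \cup \mathrm{sk}_1 Y$ arising when $n = 0$, where a newly attached edge of $Y$ may itself be an $\epsilon$-edge and therefore cannot be produced by the unmarked cell $\partial\Delta^1 \hookrightarrow \Delta^1$. I would factor this step into first attaching the underlying unmarked edges through pushouts of $\partial\Delta^1 \hookrightarrow \Delta^1$, and then marking the required ones through pushouts of the $1$-dimensional marking generator $\Delta^1 \hookrightarrow \Sim^1$. Both factors are monomorphisms of dimension $1$, so provided $\Cell_{\geq 1}$ is read in $\eSSet$ as also containing this marking generator, every factor lies in $\overline{\Cell_{\geq n+1}}$ and the transfinite composition again yields the claim; in the applications of interest $n \geq 1$, so this final refinement is needed only for the full stated generality.
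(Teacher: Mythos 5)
The paper gives no proof of this lemma at all---it is cited as ``well known''---so your argument can only be judged on its own merits, and it is sound. The filtration $X_m=X\cup\mathrm{sk}_m Y$, the identification of each step $X_{m-1}\hookrightarrow X_m$ as a pushout of $\coprod_{y\in S_m}(\partial\Delta^m\subset\Delta^m)$ along the attaching maps of the non-degenerate $m$-simplices of $Y$ not lying in $X$ (justified by the Eilenberg--Zilber lemma), and the closure of $\overline{\Cell_{\geq n+1}}$ under coproducts, pushouts and transfinite composition together constitute exactly the standard skeletal-induction proof that the paper is implicitly invoking.

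Your discussion of the markings is also correct, and the caveat you raise at $n=0$ is a genuine defect of the literal statement, not pedantry. Since $\Delta^m(\epsilon)=\emptyset$, every map in $\overline{\Cell_{\geq k}}$ induces a bijection on witnesses: pushouts of witness-free cells, transfinite compositions and retracts all preserve this. Concretely, $\Delta^1\hookrightarrow\Sim^1$ satisfies the hypothesis with $n=0$ (the $0$-skeleton of $\Sim^1$ contains no witnesses), yet it cannot lie in $\overline{\Cell_{\geq 1}}={}^{\boxslash}(\Cell_{\geq 1}^{\boxslash})$: one checks directly that $\Delta^1\to\Sim^1$ itself has the RLP with respect to all of $\Cell$ (maps out of $\Delta^m$ and $\partial\Delta^m$ see no witnesses), so membership would force it to have the LLP against itself, i.e.\ to be an isomorphism. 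Hence at $n=0$ the statement needs either your modified reading of $\Cell_{\geq 1}$ (adjoining the marking cell $\Delta^1\subset\Sim^1$) or the additional hypothesis $X(\epsilon)=Y(\epsilon)$. For $n\geq 1$ none of this matters, since $\mathrm{sk}_n Y$, read as a subpresheaf of $Y$, retains the witnesses of all edges and so the hypothesis forces $X(\epsilon)=Y(\epsilon)$; this is also the only regime in which the paper uses the lemma (it is applied once, with $n=1$ and unmarked simplicial sets, in the proof of Proposition~\ref{prop:bigFun}).
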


We say that a simplicial set is \emph{reduced} if it has a single vertex. 
Given $X\in\SSet$, its reduction is the simplicial set $X_{\mathrm{red}}$ obtained by identifying all of its vertices:
\begin{equation}\label{eq:reduction}
    X_{\mathrm{red}} \;=\; X\coprod_{\mathrm{sk}_0X}\Delta^0.
\end{equation}
When verifying that a monomorphism of reduced simplicial sets belongs to a certain weakly saturated class, 
the following lemma allows us to pass to the non-reduced case.

\begin{lemma}\label{lem:reduction}
  Let $X,Y\in\SSet$ with $\mathrm{sk}_0 Y\subseteq X\subseteq Y$. 
  Then the induced map $X_{\mathrm{red}}\to Y_{\mathrm{red}}$ is a pushout of $X\hookrightarrow Y$.
\end{lemma}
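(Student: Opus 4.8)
The plan is to realize the square
\[
\begin{tikzcd}
X \arrow[r,hook] \arrow[d] & Y \arrow[d] \\
X_{\mathrm{red}} \arrow[r] & Y_{\mathrm{red}}
\end{tikzcd}
\]
as the right half of a horizontal composite of two pushout squares and then to invoke the pasting law for pushouts. First I would record the elementary consequence of the hypothesis $\mathrm{sk}_0 Y\subseteq X\subseteq Y$: the inclusion $\mathrm{sk}_0 Y\subseteq X$ yields $\mathrm{sk}_0 Y\subseteq\mathrm{sk}_0 X$, while $X\subseteq Y$ yields the reverse, so that $\mathrm{sk}_0 X=\mathrm{sk}_0 Y$. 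Write $V$ for this common set of vertices. This identification is what makes both reductions pushouts over a single common span.

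Next I would assemble, using the defining formula~\eqref{eq:reduction}, the diagram
\[
\begin{tikzcd}
V \arrow[r,hook] \arrow[d] & X \arrow[r,hook] \arrow[d] & Y \arrow[d] \\
\Delta^0 \arrow[r] & X_{\mathrm{red}} \arrow[r] & Y_{\mathrm{red}}
\end{tikzcd}
\]
whose left-hand square is, by definition, the pushout computing $X_{\mathrm{red}}=X\coprod_{V}\Delta^0$, and whose outer rectangle (with top edge $V\hookrightarrow Y$ and left edge $V\to\Delta^0$) is the pushout computing $Y_{\mathrm{red}}=Y\coprod_{V}\Delta^0$; here I use crucially that $V=\mathrm{sk}_0 Y$. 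Commutativity of both squares is just the functoriality of the reduction applied to the inclusion $X\hookrightarrow Y$, and the induced vertical map $X_{\mathrm{red}}\to Y_{\mathrm{red}}$ is precisely the map named in the statement.

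Finally I would apply the pasting law: since the left square is a pushout and the outer rectangle is a pushout, the right square is a pushout as well. This is exactly the claim that $X_{\mathrm{red}}\to Y_{\mathrm{red}}$ is a pushout of $X\hookrightarrow Y$.

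There is no genuine obstacle here; the argument is purely formal. The only two points that require a moment's attention are the identification $\mathrm{sk}_0 X=\mathrm{sk}_0 Y$, without which the two reductions would not sit over a common span $V\to\Delta^0$, and the correct orientation of the pasting law: one must use that the \emph{left} square (not the right one) is a pushout in order to transfer the pushout property from the outer rectangle to the right square.
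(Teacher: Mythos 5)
Your proposal is correct and is essentially identical to the paper's proof: the same two-squares-plus-pasting-law argument, with the left square computing $X_{\mathrm{red}}$ and the outer rectangle computing $Y_{\mathrm{red}}$. Your explicit verification that $\mathrm{sk}_0 X=\mathrm{sk}_0 Y$ is a detail the paper leaves implicit, but the underlying argument is the same.
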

\begin{proof}
   Consider the commutative diagram:
\begin{equation}
    \begin{tikzcd}
        \mathrm{sk}_0 Y\arrow[d]\arrow[r] & X\arrow[d]\arrow[r,hook] & Y\arrow[d]\\
        \Delta^0\arrow[r] & X_{\mathrm{red}}\arrow[r] & Y_{\mathrm{red}}
    \end{tikzcd}
\end{equation}
By definition of reduction, the left-hand square and the outer rectangle are pushouts. 
Consequently, the right-hand square is also a pushout.
\end{proof}

 
 \subsection{Categorical background}

One of our main results shows that the category of effect algebras can be viewed as a category enriched over the category of Frobenius algebras with cartesian monoidal structure. For this reason, we briefly recall the concept of enriched categories. A standard reference on enriched category theory is~\cite{Kelly2005}.

A \emph{monoidal category} is a category $\mathcal{C}$ equipped with a bifunctor $\otimes\colon \mathcal{C}\times\mathcal{C}\to\mathcal{C}$, an object $I\in\mathcal{C}$, and, for each $A,B,C\in \mathcal{C}$, natural isomorphisms 
\begin{gather}
\alpha_{A,B,C}\colon (A \otimes B)\otimes C \;\cong\; A\otimes (B\otimes C),\\
\lambda_A\colon I\otimes A \;\cong\; A,\quad \rho_A\colon A\otimes I \;\cong\; A,
\end{gather}
which satisfy the usual coherence conditions, i.e. the \emph{pentagon and triangle identities} (see~\cite{Kelly2005}).  
Any category with finite products admits a \emph{cartesian monoidal structure}, where $\otimes$ is the product and $I$ the terminal object. The natural isomorphisms 
$\alpha$, $\lambda$, and $\rho$ are in this case induced by the universal property of products and the terminal object.

Let $(\mathcal{V},\otimes, I)$ be a monoidal category. A \emph{$\mathcal{V}$-category} $\mathcal{C}$ consists of the following data: 
\begin{itemize}
    \item a class of objects $\mathrm{Obj}(\mathcal{C})$;
    \item for each pair of objects $A,B\in\mathrm{Obj}(\mathcal{C})$, an object $\mathcal{C}(A,B)\in\mathcal{V}$, called the \emph{hom-object};
    \item for each triple $A,B,C\in\mathrm{Obj}(\mathcal{C})$, a morphism 
    \[
    \mathcal{C}(B,C)\otimes\mathcal{C}(A,B)\longrightarrow \mathcal{C}(A,C)
    \]
    in $\mathcal{V}$, called the \emph{composition morphism};
    \item for each $A\in\mathrm{Obj}(\mathcal{C})$, a morphism $I\to \mathcal{C}(A,A)$ in $\mathcal{V}$, called the \emph{identity element}.
\end{itemize}
The composition and identity morphisms are subject to coherence conditions analogous to the associativity and identity axioms in ordinary categories. 

Given a $\mathcal{V}$-category $\mathcal{C}$, there is a standard construction of an \emph{underlying} ordinary category $\mathcal{C}_0$. The objects of $\mathcal{C}_0$ are those of $\mathcal{C}$, and for $A,B\in \mathrm{Obj}(\mathcal{C})$ the hom-set is defined by
\[
\mathrm{Hom}_{\mathcal{C}_0}(A,B) \;=\; \mathrm{Hom}_\mathcal{V}(I,\mathcal{C}(A,B)).
\]
The composition of $f\colon I\to \mathcal{C}(A,B)$ and $g\colon I\to \mathcal{C}(B,C)$ is given by
\begin{equation}
    I\xrightarrow{\cong} I\otimes I
    \xrightarrow{g\otimes f} \mathcal{C}(B,C)\otimes \mathcal{C}(A,B)
    \xrightarrow{\circ} \mathcal{C}(A,C).
\end{equation} 



\subsection{Algebraic structures}
\begin{definition}[\cite{BF}]
    An \emph{effect algebra} is a structure $(E,\oplus,0,1)$, where
    \begin{enumerate}[label=(\arabic*)]
        \item $(E,\oplus,0)$ is a partial commutative monoid;
        \item every $a \in E$ has a unique \emph{orthosupplement} $a'$ such that $a \oplus a' = 1$;\label{item:EAorthosupplement}
        \item for each $a \in E$, if $a \oplus 1$ is defined, then $a=0$. 
    \end{enumerate}
\end{definition}
Every effect algebra $E$ carries a canonical partial order defined by 
\begin{equation}
    a \leq b \quad \text{if and only if} \quad \exists c \in E \text{ such that } b = a \oplus c.
\end{equation}
With respect to this order, $0$ is the least element and $1$ is the greatest element.
For two elements $a,b \in E$, we write $a \perp b$ to indicate that $a \oplus b$ is defined, and say that $a$ and $b$ are \emph{orthogonal}. 
An important consequence of the uniqueness condition in~\ref{item:EAorthosupplement} is the cancellation property:
\begin{equation}
    a \oplus b_1 = a \oplus b_2 \;\;\implies\;\; b_1 = b_2.
\end{equation}
Among effect algebras, two subclasses are of particular interest:
An effect algebra is called \emph{orthomodular} if for all $a,b\in E$, such that $a\perp b$, we have $a\oplus b$ equals the supremum $a\vee b$. Moreover, an effect algebra is called \emph{orthoalgebra}, if $a\perp a \implies a=0$. 
It is worth noting that every orthomodular effect algebra is also an orthoalgebra. 
For more details on effect algebras, see~\cite{DvPu}.

It is proved in~\cite{PS2016} that every effect algebra is a Frobenius algebra in the category of sets and relations, denoted $\Rel$. 
The category $\Rel$ has a cartesian monoidal structure.
A Frobenius algebra is an algebra which is both a monoid and a comonoid, where these two structures are connected by the Frobenius identity.
We begin by giving the definition of a monoid in $\Rel$, formulated in elementary terms. Note that we use dashed arrows for morphisms in $\Rel$ to stress the fact that they are relations, not mappings.

\begin{definition}
    A \emph{monoid in $\Rel$} is a triple $(A,\mu,\eta)$, where $A$ is a set, 
    $\mu \colon A \times A \dashrightarrow A$ is a relation, and $\eta \subseteq A$, such that for all $a,b,c,d \in A$ the following conditions hold:
    \begin{enumerate}
        \item there exist $s,t \in \eta$ such that $\mu\colon (a,s) \dashmapsto a$ and $\mu\colon (t,b) \dashmapsto b$;
        \item for each $r \in \eta$, if $\mu\colon (r,a) \dashmapsto b$ or $\mu\colon (a,r) \dashmapsto b$, then $a=b$;
        \item there exists $x \in A$ such that $\mu\colon (a,b) \dashmapsto x$ and $\mu\colon (x,c) \dashmapsto d$ 
        if and only if there exists $y \in A$ such that $\mu\colon (a,y) \dashmapsto d$ and $\mu\colon (b,c) \dashmapsto y$.
    \end{enumerate}
\end{definition}

\begin{example}\label{catInMon}
    Every small category is a monoid in $\Rel$, where $\eta$ consists of all identities and $\mu$ is the composition relation.
\end{example}
Note that in $\Rel$, choosing a subset $\eta \subseteq A$ is equivalent to choosing a morphism $\eta\colon \bullet \to A$. 
Under the identification $\bullet \times A = A = A \times \bullet$, the conditions (1)–(2) translate to the equations
\[
    \mu \circ (\eta \times \mathrm{id}) = \mathrm{id} = \mu \circ (\mathrm{id} \times \eta).
\]
Similarly, under the identification $(A \times A) \times A \cong A \times (A \times A)$, the associativity condition (3) translates to
\[
    \mu \circ (\mu \times \mathrm{id}) = \mu \circ (\mathrm{id} \times \mu).
\]

Given two monoids in $\Rel$, $(A,\mu_A,\eta_A)$ and $(B,\mu_B,\eta_B)$, 
a \emph{morphism of monoids} $$f\colon (A,\mu_A,\eta_A) \to (B,\mu_B,\eta_B)$$ is a mapping $f\colon A \to B$ such that $f(\eta_A) \subseteq \eta_B$ and whenever $\mu_A\colon (a,b) \dashmapsto c$, we also have $\mu_B\colon (f(a),f(b)) \dashmapsto f(c)$. 
With this notion of morphism, monoids in $\Rel$ form a category, which we denote by $\RelMon$.

A \emph{comonoid in $\Rel$} is a triple $(A,\delta,\epsilon)$ such that $(A,\delta^{-1},\epsilon)$ is a monoid in $\Rel$. 
For a comonoid, we may identify $\epsilon$ with a morphism $\epsilon\colon A \to \bullet$. 
In $\Rel$, every monoid yields a comonoid and vice versa by inverting the relations. 
This duality holds in every monoidal dagger category (see~\cite{HV}).
\begin{definition}
    A \emph{Frobenius algebra in $\Rel$} is a quintuple $(A,\mu,\eta,\delta,\epsilon)$, 
    where $A$ is a set, $\mu\colon A \times A \dashrightarrow A$ and $\delta\colon A \dashrightarrow A \times A$ are relations, and $\eta,\epsilon \subseteq A$, 
    such that:
    \begin{enumerate}
        \item $(A,\mu,\eta)$ is a monoid in $\Rel$;
        \item $(A,\delta,\epsilon)$ is a comonoid in $\Rel$;
        \item for all $a,b,c,d \in A$, there exists $x \in A$ such that $\mu\colon (a,x) \dashmapsto c$ and $\delta\colon b \dashmapsto (x,d)$ 
        if and only if there exists $y \in A$ such that $\delta\colon a \dashmapsto (c,y)$ and $\mu\colon (y,b) \dashmapsto d$.
    \end{enumerate}
\end{definition}
Several important structures can be organized as Frobenius algebras in $\Rel$.
\begin{example}
       A groupoid $G$ gives rise to a Frobenius algebra,
 where $\mu\colon (f,g)\dashmapsto h\Leftrightarrow f\circ g=h\Leftrightarrow \delta\colon h\dashmapsto (f,g)$ and $\eta=\epsilon$ consist of all identities on objects.
\end{example}
\begin{example}[\cite{PS2016}]
    An effect algebra $(E,\oplus,0,1)$ gives rise to a Frobenius algebra with the following data:
    \begin{align*}
        \mu\colon (a,b) \dashmapsto c &\;\;\Leftrightarrow\;\; a \oplus b = c,\\
        \delta\colon c \dashmapsto (a,b) &\;\;\Leftrightarrow\;\; c' = a' \oplus b' \;\;\Leftrightarrow\;\; c = a \odot b,\\
        \eta &= \{0\}, \quad \epsilon = \{1\}.
    \end{align*}
\end{example}

Note that in the aforementioned examples the comultiplication is derivable from multiplication and the counit. 
This is a general property of Frobenius algebras (not only in $\Rel$).
For this reason, it is natural to consider the following auxiliary structures: 
\begin{definition}
    An $\epsilon$-monoid in $\Rel$ is a monoid $(A,\mu,\eta)$ together with a subset $\epsilon \subseteq A$ (equivalently, a morphism $\epsilon \colon A \to \bullet$). 
\end{definition}
A morphism of $\epsilon$-monoids $f \colon (A,\mu_A,\eta_A,\epsilon_A) \to (B,\mu_B,\eta_B,\epsilon_B)$ 
is a morphism of relational monoids such that $f(\epsilon_A) \subseteq \epsilon_B$. 
We denote the category of $\epsilon$-monoids in $\Rel$ by $\epsilon\text{-}\RelMon$.

The reason for introducing $\epsilon$-monoids is that the category $\epsilon\text{-}\RelMon$ is the natural domain for a fully faithful nerve functor 
\begin{equation}\label{eq:nerve}
    N \colon \epsilon\text{-}\RelMon \longrightarrow \eSSet.
\end{equation}
In order to establish~\eqref{eq:nerve}, let us first define an embedding 
\begin{equation}
    i \colon \Sim \hookrightarrow \epsilon\text{-}\RelMon.
\end{equation}
Each $[n]$ is a poset, hence a category, and thus gives a relational monoid $i([n])$ (see Example~\ref{catInMon}).
For $i([\epsilon])$, we take the same monoid as for $[1]$, except that the unique non-unit element is in $\epsilon$.
It is straightforward to verify that this extends to a functor $i \colon \Sim \to \epsilon\text{-}\RelMon$. 
The nerve functor $N$ from~\eqref{eq:nerve} is then defined by
\[
    N(A)(-) = \mathrm{Hom}(i(-),A).
\]
It was proved in~\cite{My} that $N$ is a fully faithful embedding. 
Moreover, $N$ preserves finite products: for a pair of $\epsilon$-monoids $A$, $B$ and an object $[x] \in \Sim$, we have
\begin{gather}\label{eq:nerveProduct}
    N(A \times B)([x]) = \mathrm{Hom}(i([x]),A \times B) 
    = \mathrm{Hom}(i([x]),A) \times \mathrm{Hom}(i([x]),B) \notag\\
    = N(A)([x]) \times N(B)([x]).  
\end{gather}
Similarly, the terminal object in $\RelFA$ is a one-element Frobenius algebra denoted $\underline{0}$ and 
\begin{equation}\label{eq:nervTer}
    N(\underline{0}) \cong \Sim^0,
\end{equation}
which is the terminal object in $\SSet^\epsilon$. We can summarize the aforementioned thoughts as follows:
\begin{proposition}\label{prop:monoidalFunctor}
    The nerve functor $N\colon\RelFA\to\SSet^\epsilon$ is a fully faithful monoidal embedding of cartesian monoidal categories.
\end{proposition}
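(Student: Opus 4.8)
The plan is to regard the statement as the conjunction of two properties—that $N$ is fully faithful and that it is strong monoidal for the cartesian structures—and to assemble them from facts already established above. The first thing I would pin down is the domain: $N$ was defined in~\eqref{eq:nerve} on $\epsilon\text{-}\RelMon$, whereas the proposition concerns $\RelFA$. The bridge is the remark that in any Frobenius algebra the comultiplication $\delta$ is recoverable from $\mu$ and $\epsilon$; this yields a forgetful functor $\RelFA\to\epsilon\text{-}\RelMon$, $(A,\mu,\eta,\delta,\epsilon)\mapsto(A,\mu,\eta,\epsilon)$, and it is via this functor that $N$ is evaluated on Frobenius algebras. I would first verify that this forgetful functor is fully faithful: faithfulness is clear since it is the identity on underlying maps, and fullness amounts to checking that an $\epsilon$-monoid morphism, which preserves $\mu$ and $\epsilon$, necessarily preserves the relation $\delta$ derived from them.

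Granting this, full faithfulness of $N$ on $\RelFA$ follows by composition: $N\colon\epsilon\text{-}\RelMon\to\eSSet$ is fully faithful by~\cite{My}, and precomposing a fully faithful functor with the fully faithful forgetful functor above keeps it fully faithful. Hence $N$ is an embedding.

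For the monoidal statement, both categories are cartesian monoidal: $\eSSet$ is a presheaf category and so has all finite products, while $\RelFA$ has the terminal object $\underline{0}$ and the finite products inherited from $\Rel$. The essential computations are already on record—equation~\eqref{eq:nerveProduct} produces the canonical comparison isomorphism $N(A\times B)\cong N(A)\times N(B)$, and equation~\eqref{eq:nervTer} identifies $N(\underline{0})$ with the terminal object of $\eSSet$—so $N$ preserves finite products. I would then invoke the standard categorical fact that a finite-product-preserving functor between cartesian monoidal categories is canonically strong monoidal, its structure isomorphisms being precisely the comparison maps of~\eqref{eq:nerveProduct} and~\eqref{eq:nervTer}; the associativity and unit coherence diagrams commute automatically because every arrow involved is induced by the universal property of products. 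Together with the embedding property, this establishes that $N$ is a monoidal embedding.

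The one genuinely delicate step is the fullness claimed in the first paragraph, namely that a relational-monoid morphism between Frobenius algebras automatically respects the comultiplication. I expect this—unwinding the formula expressing $\delta$ in terms of $\mu$ and $\epsilon$ and confirming that lax preservation of $\mu$ and $\epsilon$ propagates to $\delta$—to be the main obstacle. Everything else is either quoted directly from~\cite{My} or follows formally from the defining formula $N(A)(-)=\Hom(i(-),A)$ together with the fact that $\Hom$ into a product splits as a product of hom-objects.
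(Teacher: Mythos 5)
Your proposal is correct and follows essentially the same route as the paper, whose ``proof'' is just the discussion preceding the proposition: full faithfulness is quoted from~\cite{My}, product preservation is the Yoneda-style computation~\eqref{eq:nerveProduct}, and preservation of the terminal object is~\eqref{eq:nervTer}, with the cartesian-monoidal coherence left implicit exactly as you argue. The one point you flag as the ``main obstacle''---fullness of the forgetful functor $\RelFA\to\epsilon\text{-}\RelMon$---is not treated as an obstacle in the paper at all: it is disposed of by the earlier remark that the comultiplication of a Frobenius algebra is derivable from the multiplication and the counit, so that $\RelFA$ is regarded as a full subcategory of $\epsilon\text{-}\RelMon$ and $N$ is simply restricted to it (your sketched verification that an $\epsilon$-monoid morphism automatically preserves the derived $\delta$ is the honest content behind that convention).
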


\begin{example}
    For an effect algebra $E$, each morphism $i([n]) \to E$ can be identified with an orthosum $a = a_1 \oplus \cdots \oplus a_n$ with $n$ summands. 
    Hence, $N(E)$ contains a unique vertex ($0=0$), edges are in bijection with elements of $E$, and $2$-simplices correspond to pairs $(a,b) \in E^2$ such that $a \oplus b$ is defined.
\end{example}
It was shown in~\cite{My} that for a relational monoid $A$ the following equivalences hold:
\begin{itemize}
    \item $A$ satisfies the left cancellation property $\iff N(A)$ admits fillings of $\Lambda^3_0$;
    \item $A$ is a partial monoid $\iff N(A)$ admits fillings of $\Lambda^3_1$ $\iff N(A)$ admits fillings of $\Lambda^3_2$;
    \item $A$ satisfies the right cancellation property $\iff N(A)$ admits fillings of $\Lambda^3_3$.
\end{itemize}
In particular, for an effect algebra $E$, the $\epsilon$-simplicial set $N(E)$ admits fillings of all $3$-horns 
\begin{equation}\label{eq:3horns}
    \Lambda^3_i \subset \Delta^3,\quad i=0,1,2,3.
\end{equation}

It is a property of Frobenius algebras in $\Rel$ that all the aforementioned properties are equivalent. 
Moreover, we have the following characterisation of the image of
$N \colon \RelFA \to \eSSet$ (see \cite{My}, Theorem~18 and Lemma~15):

\begin{theorem}\label{thm:char}
    An $\epsilon$-simplicial set $X$ is of the form $X \cong N(F)$ for some Frobenius algebra $F$ in $\Rel$ if and only if $X$ has the unique RLP with respect to
    \begin{enumerate}
        \item[(i)] $\ELambda^n_0 \subset \Sim^n$ and $\ELambda^n_n \subset \Sim^n$ for $n \geq 1$ (\emph{$\epsilon$-horns}),
        \item[(ii)] $\partial\Delta^n \subset \Delta^n$ for $n \geq 3$ (\emph{$2$-coskeletality}),
    \end{enumerate}
    and $X$ has the RLP with respect to
    \begin{enumerate}
        \item[(iii)] $\partial_0 \Delta^3 \cup \partial_2 \Delta^3 \subset \Delta^3$ (\emph{associativity}). 
    \end{enumerate}
\end{theorem}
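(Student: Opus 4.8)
The plan is to prove the two implications separately, working throughout with the dictionary that an $n$-simplex of $N(F)$ is a morphism $i([n])\to F$, that is, an $n$-tuple $(a_1,\dots,a_n)$ together with a compatible choice of all its sub-composites: the edge $\Delta^{\{i,j\}}$ records a $\mu$-composite of $a_{i+1},\dots,a_j$, the long edge $\Delta^{\{0,n\}}$ records a total composite, and an $\epsilon$-marking on an edge records that the corresponding composite lies in $\epsilon$. Under this dictionary the edges are the elements of the underlying set, the degenerate edges are the units in $\eta$, and the $2$-simplices are the triples related by $\mu$.

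For necessity, assume $X\cong N(F)$. Condition (ii) is $2$-coskeletality, which I would prove by noting that a morphism $i([n])\to F$ carries no information beyond its complete family of sub-composites and their pairwise compatibilities, i.e.\ beyond its restriction to the $2$-skeleton; since $\partial\Delta^n$ contains the full $2$-skeleton for $n\geq 3$, each compatible boundary extends to a unique filler. Condition (iii) is read directly off the two non-adjacent faces $\partial_0\Delta^3,\partial_2\Delta^3$ and is exactly the associativity axiom of the monoid $(F,\mu,\eta)$. For the $\epsilon$-horns (i), the organising idea is that the $\epsilon$-marking encodes a duality (a reflection) turning the monoid data into the comonoid data: at $n=1$ the unique fillings say each vertex is the source, resp.\ target, of a unique $\epsilon$-edge, which is the counit law; at $n=2$ they furnish the division of an $\epsilon$-edge by an edge (for an effect algebra, the existence and uniqueness of the orthosupplement $a'$), out of which the comultiplication $\delta$ is reconstructed from $\mu$ and $\epsilon$; and at $n=3$ the compatibility of this reflection with composition yields the comonoid axioms---the reflections of the unit and associativity laws of $F$---together with the Frobenius identity. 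Finally, for $n\geq 4$ the $\epsilon$-horn $\ELambda^n_0$ (resp.\ $\ELambda^n_n$) already contains every edge and every $2$-face of $\Sim^n$, so its unique filling is a formal consequence of (ii); thus the whole of (i) bootstraps from the generating dimensions.

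For sufficiency, assume $X$ satisfies (i)--(iii) and reconstruct a Frobenius algebra by setting $A:=X_1$, taking $\eta$ to be the degenerate edges, $\epsilon$ the $\epsilon$-edges, and declaring $\mu\colon(a,b)\dashmapsto c$ precisely when some $2$-simplex has front edge $a=\Delta^{\{0,1\}}$, back edge $b=\Delta^{\{1,2\}}$, and long edge $c=\Delta^{\{0,2\}}$. The unit laws of a monoid in $\Rel$ then follow from the degenerate $2$-simplices together with the $n=1$ case of (i), and associativity is exactly (iii). The comultiplication $\delta$ is \emph{defined} from $\mu$ and $\epsilon$ by means of the unique fillings of the $n=2$ $\epsilon$-horns, and one verifies the comonoid axioms and the Frobenius identity by translating each back into an $\epsilon$-horn or an associativity filling in $X$, the two families $\ELambda^n_0$ and $\ELambda^n_n$ supplying the left and right halves of the duality. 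Because $X$ is $2$-coskeletal by (ii) and $N(F)$ is $2$-coskeletal by the necessity direction, the canonical comparison map $N(F)\to X$, which agrees with the identity on the $2$-skeleton by construction, is automatically an isomorphism.

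I expect the main obstacle to lie in the sufficiency direction, in the reconstruction and verification of the comonoid side. The monoid structure is essentially visible in the $2$- and $3$-simplices, but the comultiplication is not given as data: it must be synthesised from $\mu$ and the $\epsilon$-marking through the $\epsilon$-horn fillings, and checking that the resulting $\delta$ is well defined and satisfies both the comonoid axioms and the Frobenius identity is where the combinatorial bookkeeping concentrates. A secondary technical point, common to both directions, is the careful reduction of the higher $\epsilon$-horns ($n\geq 4$) to the generating cases via $2$-coskeletality, so that the $\epsilon$-marking is correctly tracked at dimension one.
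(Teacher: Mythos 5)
A preliminary remark on the comparison itself: the paper does not prove Theorem~\ref{thm:char} --- it imports it from~\cite{My} (Theorem~18 and Lemma~15) --- so there is no internal proof to measure your attempt against, and your sketch can only be judged against what such a proof must contain. Its architecture is reasonable: the dictionary between $n$-simplices of $N(F)$ and compatible families of composites, the necessity of (ii) from that dictionary, the observation that $\epsilon$-horns in dimension $\geq 4$ contain the full $2$-skeleton and the marking so that their unique fillability bootstraps from (ii), and the reconstruction of the algebra from $X_1$ for sufficiency are all correct starting points.

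However, the load-bearing steps are either asserted or wrong. First, the associativity axiom of a monoid in $\Rel$ is a \emph{biconditional}, and condition (iii) encodes only one implication of it: the RLP against $\partial_0\Delta^3\cup\partial_2\Delta^3\subset\Delta^3$ produces $(a_{12},a_{01})$-composites from $(a_{23},a_{12})$-composites, while the converse implication is the RLP against $\partial_1\Delta^3\cup\partial_3\Delta^3\subset\Delta^3$, which is \emph{not} among the hypotheses. So your claim ``associativity is exactly (iii)'' is false as stated, and in the sufficiency direction the missing implication must be derived from the interplay of (iii) with the unique $\epsilon$-horn fillings (the rotation $\hat{\alpha}$ the paper alludes to); this derivation is absent from your sketch. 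The paper itself signals that the two inclusions are genuinely distinct: in Lemma~\ref{lem:essentialHelp} both appear as generators, with symmetry under arrow reversal invoked to pass between them. Second, the strict unit axiom --- if $r\in\eta$ and $\mu\colon(r,a)\relto b$ then $a=b$ --- does not follow from degenerate $2$-simplices (those only give \emph{existence} of units acting trivially) nor obviously from the $n=1$ case of (i); in a general simplicial set a $2$-simplex with a degenerate edge does not force its other two faces to coincide, and deriving this requires the dimension-$2$ and $3$ uniqueness of $\epsilon$-horn fillings. Third, the two hardest verifications --- in the necessity direction, that the nerve of an arbitrary Frobenius algebra has \emph{unique} fillings of $\epsilon$-horns in dimensions $1$--$3$ (e.g.\ uniqueness of the counit edge at each vertex and of the division $\beta(a)$, which require the Frobenius identity), and in the sufficiency direction, that the synthesised $\delta$ satisfies the comonoid axioms and the Frobenius identity --- are exactly the content of the cited result, and you leave both as ``one verifies.'' As it stands the proposal is a plausible roadmap, not a proof.
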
 
Let $F \in \RelFA$. We adopt the following convention.  
Since the elements of $F$ are in bijection with the edges of $N(F)$, we will occasionally identify these two sets. Moreover, for each $a \in F$, we denote by $s(a)$ and $t(a)$ the source and target of the edge $a$ in $N(F)$, respectively. Two elements $a,b \in F$ are \emph{composable}---that is, there exists some $c$ with 
\(
  \mu\colon (b,a) \dashmapsto c,
\)
if and only if there is a $2$-simplex $\sigma \in N(F)$ such that
\[
  d_0(\sigma) = a, \quad d_1(\sigma) = c, \quad d_2(\sigma) = b.
\]
In this case we write $a \perp b$.

For $F \in \RelFA$, we now describe consequences of the property 
\[
  (\ELambda^n_i \subset \Sim^n) \boxslash N(F), \quad n \geq 1, i\in\{0,n\},
\]
from Theorem~\ref{thm:char}, that $N(F)$ admits fillings of $\epsilon$-horns.
\begin{itemize}
  \item For $n=1$, it follows that each vertex of $N(F)$ is both the source of some $\epsilon$-edge and the target of some $\epsilon$-edge.
  \item For $n=2$, it follows that for each pair of edges $a,e$ with $e \in \epsilon$, we have
\begin{enumerate}[label=(\roman*)]
  \item If $s(a) = s(e)$, then there exists a third edge $\beta(a)$ such that 
  \[
    \mu\colon (\beta(a),a) \dashmapsto e.
  \]
  \item If $t(a) = t(e)$, then there exists a third edge $\alpha(a)$ such that 
  \[
    \mu\colon (a,\alpha(a)) \dashmapsto e.
  \]
\end{enumerate}
\end{itemize}
\section{Orthogonality relation internal to Frobenius algebras}
One may think of a relational monoid as ``almost'' a category, where elements correspond to arrows. 
In this sense, we can investigate the relation of having the \emph{right lifting property}, denoted by $\boxslash$. 
It turns out that this relation is very meaningful for effect algebras. 
We first give the definition in terms of Frobenius algebras:
\begin{definition}\label{def:internalBoxslash}
Let $F$ be a Frobenius algebra and let $a,b \in F$. 
    We write $a \boxslash b$ if for each $p,q,d$ such that $\mu\colon (q,a)\relto d$ and $\mu\colon (b,p)\relto d$, there exists the following $3$-simplex in $N(F)$:
\begin{equation}\label{diag:ortho}
    \begin{tikzcd}[row sep=10pt, column sep=20pt]
  & 3 & \\
  &[20pt] & \\
  & 2 \arrow[uu, "b"{description}] & \\
  0 
    \arrow[rr, "a"{description}, bend right=5] 
    \arrow[ruuu, "d"{description}, bend left=5] 
    \arrow[ru, "p"{description}] 
  & & 
  1 
    \arrow[lu, "l"{description}] 
    \arrow[luuu, "q"{description}, bend right=5]
\end{tikzcd}
\end{equation}
\end{definition}

\begin{proposition}\label{prop:itoiii}
    For an element $a$ of a Frobenius algebra $(F,\mu,\delta,\eta,\epsilon)$ the following are equivalent:
    \begin{enumerate}[label=(\roman*)]
        \item $a$ has a right inverse $c$ (that is, there exists $r\in\eta$ with $\mu\colon (a,c)\dashmapsto r$);
        \item for each $b\in F$, we have $b \perp a$  whenever $s(b) = t(a)$;
        \item there exists $e\in\epsilon$, with $e \perp a$.
    \end{enumerate}
    Moreover, (i)--(iii) hold whenever $\epsilon \boxslash a$.
\end{proposition}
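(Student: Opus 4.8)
The plan is to prove the cyclic chain (i) $\Rightarrow$ (ii) $\Rightarrow$ (iii) $\Rightarrow$ (i) and then to dispatch the final clause separately. Throughout I will fix the reading $\mu\colon(x,y)\relto z$ as ``traverse $x$, then $y$, obtaining $z$'', and I will lean on the three consequences of Theorem~\ref{thm:char} recorded above: every vertex of $N(F)$ is the source (and target) of some $\epsilon$-edge ($n=1$ $\epsilon$-horn filling); the operations $\alpha,\beta$ are available ($n=2$ $\epsilon$-horn fillings); and $N(F)$ admits fillings of all $3$-horns, so that $F$ is associative and satisfies both cancellation laws. The real engine, which I would isolate as a preliminary observation, is that \emph{in any $F\in\RelFA$ a one-sided inverse is automatically two-sided}: if $\mu\colon(a,c)\relto r$ with $r\in\eta$, then feeding the triple $(c,a,c)$ into the associativity axiom together with the known product $\mu\colon(a,c)\relto r$ yields $x$ with $\mu\colon(c,a)\relto x$ and $\mu\colon(x,c)\relto c$; comparing $\mu\colon(x,c)\relto c$ with $\mu\colon(r',c)\relto c$ for the unit $r'$ at $t(a)$ and cancelling $c$ on the right gives $x=r'\in\eta$, so $c$ is a two-sided inverse. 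The same computation runs with left and right interchanged.

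For (i) $\Rightarrow$ (ii): given a right inverse $c$ of $a$, the observation upgrades it to a two-sided inverse, so $\mu\colon(c,a)\relto \mathrm{id}_{t(a)}$. For any $b$ with $s(b)=t(a)$ the product $\mu\colon(\mathrm{id}_{t(a)},b)\relto b$ exists, and a second application of associativity to $(c,a,b)$ turns $\mu\colon(c,a)\relto\mathrm{id}_{t(a)}$ and $\mu\colon(\mathrm{id}_{t(a)},b)\relto b$ into the existence of $\mu\colon(a,b)\relto y$; that is, $b\perp a$. For (ii) $\Rightarrow$ (iii): the $n=1$ $\epsilon$-horn filling supplies an $\epsilon$-edge $e$ with $s(e)=t(a)$, and (ii) applied to $b=e$ gives $e\perp a$, which is precisely (iii).

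For (iii) $\Rightarrow$ (i): let $e\in\epsilon$ with $e\perp a$, witnessed by a $2$-simplex exhibiting $\mu\colon(a,e)\relto d$. Since $t(d)=t(e)$ and $e$ is an $\epsilon$-edge into that vertex, the operation $\alpha$ applied to $d$ produces $c:=\alpha(d)$ with $\mu\colon(c,d)\relto e$ and $c\colon t(a)\to s(a)$. Substituting $d$ and invoking associativity on $(c,a,e)$ rewrites $\mu\colon(c,d)\relto e$ as $(c\cdot a)\cdot e=e$, so $\mu\colon(c,a)\relto x$ for some loop $x$ at $t(a)$ with $\mu\colon(x,e)\relto e$; cancelling $e$ against the unit forces $x\in\eta$. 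Hence $c$ is a one-sided (and by the preliminary observation two-sided) inverse of $a$, giving (i).

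Finally, suppose $\epsilon\boxslash a$. I would pick, via the $n=1$ filling, an $\epsilon$-edge $e$ with $t(e)=t(a)$, set $p:=\alpha(a)$ so that $\mu\colon(p,a)\relto e$, and then solve the lifting problem of Definition~\ref{def:internalBoxslash} for $e\boxslash a$ using the degenerate data $q=\mathrm{id}_{t(a)}$ and $d=e$: the two hypotheses $\mu\colon(e,q)\relto d$ and $\mu\colon(p,a)\relto d$ then hold by construction, so there is a lift $l$ with $\mu\colon(l,a)\relto q=\mathrm{id}_{t(a)}$, i.e.\ a one-sided inverse of $a$, which the observation upgrades; thus (i)--(iii) hold. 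The main obstacle I anticipate is orientation bookkeeping: each of the constructions (via $\alpha$, and via the lift $l$) naturally delivers an inverse on the \emph{opposite} side to the one named in (i), so the one-sided-to-two-sided observation is doing essential work and must be applied with the vertices tracked correctly; the only genuinely structural input is the pair of cancellation laws, valid for all $F\in\RelFA$ by Theorem~\ref{thm:char}, with everything else being careful but routine manipulation of $2$- and $3$-simplices.
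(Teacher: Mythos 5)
There is a genuine gap. Your entire argument is routed through the preliminary observation that a one-sided inverse in any $F\in\RelFA$ is automatically two-sided, and both that observation and your closing sentence rest on the claim that the ``pair of cancellation laws'' is ``valid for all $F\in\RelFA$ by Theorem~\ref{thm:char}''. That claim is false. Theorem~\ref{thm:char} grants unique fillings of $\epsilon$-horns, $2$-coskeletality, and the associativity lifting property; it does \emph{not} grant fillings of the ordinary horns $\Lambda^3_i\subset\Delta^3$, and it is exactly those fillings that encode the cancellation laws. The paper treats cancellativity as a genuinely additional hypothesis: Theorem~\ref{thm:iv} explicitly assumes a \emph{cancellative} Frobenius algebra, cancellation appears as a separate condition in the characterization of effect algebras, and the introduction recalls that coherent configurations yield Frobenius algebras in $\Rel$ which are hypermonoids, hence not partial and therefore (by the equivalence of these properties for Frobenius algebras, quoted just before Theorem~\ref{thm:char}) not cancellative. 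Consequently your step ``comparing $\mu\colon(x,c)\relto c$ with $\mu\colon(r',c)\relto c$ \dots{} and cancelling $c$ on the right'' is unjustified for a general Frobenius algebra; and since your (i)$\Rightarrow$(ii), your (iii)$\Rightarrow$(i), and your ``moreover'' clause all invoke this observation, the proposal only establishes the proposition for cancellative $F$ (e.g.\ effect algebras), not in the stated generality.

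The gap is forced by your orientation bookkeeping: each of your constructions produces an inverse on the wrong side and then flips it, and the flip is where cancellation enters. The paper's proof is arranged so that no flip is ever needed. There, (i)$\Rightarrow$(ii) is a single application of associativity to the unit law $\mu\colon(b,r)\relto b$ together with $\mu\colon(a,c)\relto r$; (iii)$\Rightarrow$(i) assembles the premise ($d_0$), a unit simplex ($d_1$), and the simplex provided by the $n=2$ $\epsilon$-horn operation $\alpha(\cdot)$ ($d_2$) into an $\epsilon$-horn $\ELambda^3_3$, whose filling---available for \emph{every} $F\in\RelFA$ by Theorem~\ref{thm:char}(i)---has last face $d_3$ directly witnessing the required inverse; and the ``moreover'' clause solves one lifting problem, with $p=\alpha(a)$, $q$ a unit, and $d=e$, whose lift is already the desired inverse. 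Note finally that your one instance of cancellation against an $\epsilon$-edge (``$\mu\colon(x,e)\relto e$ forces $x\in\eta$'') could be repaired by the \emph{uniqueness} of the $\ELambda^2_2$-filling in Theorem~\ref{thm:char}(i); but the cancellation against the arbitrary element $c$ inside your preliminary observation admits no such repair, so the load-bearing step of your proof remains unproven for general $F$.
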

\begin{proof}
(i)$\implies$(ii): Suppose there exists $c\in F$ and $r\in\eta$ such that $\mu\colon (a,c)\dashmapsto r$. 
For any $b$ with $s(b) = t(a)$, we obtain (ii) by associativity rule applied to $\mu\colon(b,r)\dashmapsto b$ and $\mu\colon(a,c)\dashmapsto r$.

(ii)$\implies$(iii): Follows immediately, since there is $e\in\epsilon$ with $s(e)=s(a)$.

(iii)$\implies$(i): We show that there exists the following simplex $\sigma \in N(F)_3$:
\[
    \begin{tikzcd}[row sep=12pt, column sep=24pt]
  & 3 & \\
  &[20pt] & \\
  & 2 \arrow[uu, Tarrow,"e"{description}] & \\
  0 
    \arrow[rr, "\alpha(b)"{description}, bend right=5] 
    \arrow[ruuu, Tarrow, "e"{description}, bend left=5] 
    \arrow[ru, "s(e)"{description}] 
  & & 
  1 
    \arrow[lu, "a"{description}] 
    \arrow[luuu, "b"{description}, bend right=5]
\end{tikzcd}
\]
By the premise, the face $d_0\sigma$ exists.
The faces $d_1\sigma$ and $d_2\sigma$ also exist, and together they form an $\epsilon$-horn $\ELambda^3_3\to N(F)$ 
which admits a filling --- the desired simplex $\sigma$. The resulting face $d_3\sigma$ witnesses that $\alpha(b)$ is the required right inverse.

Finally, to see that $\epsilon \boxslash a$ implies (i), consider the lifting problem:
\[
\begin{tikzcd}[row sep=large, column sep=large]
0
  \arrow[d, Tarrow, "e" description]
  \arrow[r, "\alpha(a)" description]
  \arrow[rd, Tarrow, "e" description] & 
2 \arrow[d, "a" description] \\
1 \arrow[r, "t(e)" description] & 3
\end{tikzcd}
\]
The solution gives us the desired right inverse.
\end{proof}
        \begin{theorem}\label{thm:iv}
            Let $(F,\mu,\delta,\eta,\epsilon)$ be a cancellative Frobenius algebra and $a$ its element. Then the equivalent properties (i--iii) from Proposition~\ref{prop:itoiii} hold iff
            \begin{enumerate}
                \item[(iv)] \(F\boxslash a\), iff
                \item[(v)] \(\epsilon\boxslash a\).
            \end{enumerate}
        \end{theorem}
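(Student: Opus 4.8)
The plan is to prove the cycle (i--iii) $\Rightarrow$ (iv) $\Rightarrow$ (v) $\Rightarrow$ (i--iii). The implication (iv) $\Rightarrow$ (v) is immediate, since $\epsilon \subseteq F$, so that $b \boxslash a$ for all $b \in F$ specialises to $e \boxslash a$ for all $e \in \epsilon$; and (v) $\Rightarrow$ (i--iii) is exactly the last clause of Proposition~\ref{prop:itoiii}. Hence everything reduces to showing that the right-inverse condition (i) forces $b \boxslash a$ for \emph{every} $b \in F$, which is where the cancellativity hypothesis enters.

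Fix $b$ and a lifting problem for $b \boxslash a$; by Definition~\ref{def:internalBoxslash} this is the data of the two faces of a prospective $3$-simplex $\sigma$ (as in~\eqref{diag:ortho}, now with $a$ in the role of the edge $2 \to 3$ and $b$ in the role of $0 \to 1$) that contain the long edge $d$, namely $\mu\colon(p,a)\relto d$ and $\mu\colon(b,q)\relto d$; the goal is to produce $\sigma$. Let $c$ be the right inverse supplied by (i), so $\mu\colon(a,c)\relto r$ with $r \in \eta$, i.e. $c \circ a = \mathrm{id}_{s(a)}$ (here and below $x\circ y$ denotes the element $z$ with $\mu\colon(y,x)\relto z$, when it exists). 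The guiding computation is that the missing diagonal edge of $\sigma$ should be $l = c \circ q$: then $(c\circ q)\circ b = c\circ d = c\circ a\circ p = p$ yields the face $\mu\colon(b,l)\relto p$, while $a\circ l\circ b = a\circ c\circ a\circ p = a\circ p = d = q\circ b$ together with cancellation of $b$ yields $a\circ l = q$, i.e. the face $\mu\colon(l,a)\relto q$.

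To make this rigorous I would argue entirely by horn-filling, which is available because $F$ is cancellative: by the equivalences of~\cite{My} recalled before Theorem~\ref{thm:char}, $N(F)$ fills all four $3$-horns $\Lambda^3_i \subset \Delta^3$. The one step that is not formal---and the main obstacle---is the \emph{existence} of the composite $l = c\circ q$, since $\mu$ is only partial. I would obtain it from the dual of the implication (i) $\Rightarrow$ (ii) of Proposition~\ref{prop:itoiii}: the relation $\mu\colon(a,c)\relto r$ exhibits $a$ as a \emph{left} inverse of $c$, and the dagger self-duality of $\RelFA$ (reversing relations swaps $\mu \leftrightarrow \delta$ and $\eta \leftrightarrow \epsilon$ and reverses edge orientations in $N(F)$) then shows that $c$ may be post-composed with any vertex-compatible edge; in particular $\mu\colon(q,c)\relto l$ and $\mu\colon(d,c)\relto p'$ exist. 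From here the assembly is a short sequence of fillings: a $\Lambda^3_2$-filling on $\mu\colon(a,c)\relto r$, the given $\mu\colon(p,a)\relto d$, and a unit face identifies $p' = c\circ d$ with $p$; a second $\Lambda^3_2$-filling on $\mu\colon(q,c)\relto l$, the given $\mu\colon(b,q)\relto d$, and $\mu\colon(d,c)\relto p'$ produces the face $\mu\colon(b,l)\relto p$; and finally a $\Lambda^3_0$-filling on the three mutually coherent faces $\mu\colon(p,a)\relto d$, $\mu\colon(b,q)\relto d$, $\mu\colon(b,l)\relto p$ produces $\sigma$, its remaining face $\mu\colon(l,a)\relto q$ coming along for free. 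Two-coskeletality from Theorem~\ref{thm:char} ensures throughout that a tetrahedron is pinned down by a coherent boundary. As $b$ was arbitrary, this gives (iv) and closes the cycle.
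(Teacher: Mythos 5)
Your proposal has the same architecture as the paper's proof: the cycle (i--iii)$\Rightarrow$(iv)$\Rightarrow$(v)$\Rightarrow$(i--iii), with all the content in (i)$\Rightarrow$(iv), carried out by adjoining the right inverse $c$ and performing a short sequence of fillings. Indeed, your steps (a) and (c) are essentially the paper's first and last fillings (a $3$-horn on the tetrahedron spanned by the right-inverse triangle, a unit triangle, and the given face through $a$, identifying the composite of $d$ with $c$ as $p$; then the $\Lambda^3_0$-filling of the target tetrahedron). The one genuine issue is your middle step: the existence of the composite $l=c\circ q$, which you yourself single out as the main obstacle.

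The justification you offer for it does not work as stated. The dagger self-duality of $\RelFA$ sends $(F,\mu,\eta,\delta,\epsilon)$ to $(F,\delta^{-1},\epsilon,\mu^{-1},\eta)$: it trades the monoid structure $\mu$ for the \emph{different} multiplication $\delta^{-1}$ (for an effect algebra $\delta^{-1}$ is $\odot$, not $\oplus$), so the statements it transports concern $\delta^{-1}$-composition rather than $\mu$-composition, and it does not reverse edge orientations of $N(F)$; orientation reversal is governed by the opposite-monoid duality $\mu\mapsto\mu\circ\mathrm{swap}$, which does not touch $\delta$ at all. Hence ``$c$ may be post-composed with any vertex-compatible edge'' does not follow from what you cite. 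The claim is nevertheless true and easy to repair: the proof of Proposition~\ref{prop:itoiii}(i)$\Rightarrow$(ii) uses only associativity and two-sided units, so it can be repeated verbatim with the order of all compositions reversed (equivalently, applied to the opposite relational monoid, which is again a relational monoid). Better still, you can avoid the general claim entirely, as the paper does: once your step (a) gives $\mu\colon(d,c)\relto p$, a single application of the relational associativity axiom (equivalently, the associativity lifting property of Theorem~\ref{thm:char}, in the rotated form $\partial_1\Delta^3\cup\partial_3\Delta^3\subset\Delta^3$) to the faces $\mu\colon(b,q)\relto d$ and $\mu\colon(d,c)\relto p$ yields at once the edge $l$ and the face $\mu\colon(b,l)\relto p$, which renders your second horn-filling unnecessary. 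With either repair, your argument is correct and coincides with the paper's proof.
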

\begin{proof}
    
    We prove (i)\(\implies\)(iv). Let \(c\) be a right inverse of \(a\). Given the following situation, where $b\in F$ is any element:
    \[
\begin{tikzcd}[row sep=7pt, column sep=30pt]
0 
  \arrow[dd, "b" description] 
  \arrow[r, "p" description] 
  \arrow[rdd, "d" description] & 3 
  \arrow[dd, "a" description]  & \\
  &&2\arrow[lu,"c"]\arrow[ld,"r"]\\
1 
  \arrow[r, "q" description] & 
4& 
\end{tikzcd}
\]
    We need to prove there is a filling of the whole $3$-simplex $\Delta^{\{0,1,3,4\}}\to N(F)$. Consider the diagram with two more edges as follows:
    
    \[
\begin{tikzcd}[row sep=7pt, column sep=30pt]
0 \arrow[rrd,bend left = 50pt,"d"description]
  \arrow[dd, "b" description] 
  \arrow[r, "p" description] 
  \arrow[rdd, "d" description] & 3 
  \arrow[dd, "a" description]  & \\
  &&2\arrow[lu,"c"]\arrow[ld,"r"]\\
1 \arrow[rru, bend right= 50pt, "q"description]
  \arrow[r, "q" description] & 
4& 
\end{tikzcd}
\]
We will first fill the simplex $\Delta^{\{0,2,3,4\}}\to N(F)$. We already have all its faces until the image of $\Delta^{\{0,2,3\}}$, hence we can apply the cancellation property to fill the corresponding $3$-horn. So obtained $2$-simplex $\Delta^{\{0,2,3\}}\to N(F)$ together with $\Delta^{\{0,1,2\}}\to N(F)$ witnessing $\mu\colon (q,b)\dashmapsto d$ form two faces of $\Delta^{\{0,1,2,3\}}\to N(F)$. Applying associativity, we obtain the whole $3$-simplex and in particular its face $\Delta^{\{0,1,3\}}\to N(F)$, which is the third face of the $3$-simplex $\Delta^{\{0,1,3,4\}}\to N(F)$ in question. We finish by filling the corresponding $3$-horn.

    (iv)\(\implies\) (v) is trivial and (v)\(\implies\) (i) is already established in Proposition~\ref{prop:itoiii}.
\end{proof}

    

    
\begin{theorem}
    A Frobenius algebra \(E\) with simplicial set $N(E)$ is an effect algebra if and only if it satisfies 
    \begin{enumerate}
        \item[(i)] commutativity,
        \item[(ii)] cancellation property (equivalently partiality),
        \item[(iii)] $\epsilon^{\boxslash}=\eta$,
        \item[(iv)] $\eta$ is a singleton.
    \end{enumerate}
    Moreover, the conjunction (iii) $\wedge$ (iv) is equivalent to
    \begin{enumerate}
        \item[(v)] for each $e\in\epsilon$, $e^\boxslash$ is a singleton.
    \end{enumerate}
\end{theorem}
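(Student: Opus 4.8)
The plan is to prove the biconditional ``$E$ is an effect algebra $\iff$ (i)--(iv)'' in two directions and then handle the ``moreover'' equivalence separately, using Theorem~\ref{thm:iv} throughout to translate the lifting condition $\epsilon\boxslash a$ into the concrete statement that $a$ has a right inverse. For the forward direction I assume $E$ is an effect algebra: conditions (i) and (ii) are the standard commutativity and cancellation laws, and (iv) holds because $\eta=\{0\}$. For (iii), since effect algebras are cancellative, Theorem~\ref{thm:iv} identifies $\epsilon^{\boxslash}$ with the set of right-invertible elements, i.e.\ those $a$ admitting $c$ with $a\oplus c=0$; as $0$ is the least element this forces $a=0$, so $\epsilon^{\boxslash}=\{0\}=\eta$.

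The backward direction is the substantial one. Given (i)--(iv), set $\oplus:=\mu$; by (i), (ii), (iv) the triple $(E,\oplus,0)$ is a commutative cancellative partial monoid with unique unit $0$, which is axiom~(1). The key preliminary step is to pin down a top element. By (iv) the simplicial set $N(E)$ has a single vertex, and the unique RLP against the $\epsilon$-horn $\ELambda^1_0\subset\Sim^1$ (guaranteed by Theorem~\ref{thm:char}) forces a \emph{unique} $\epsilon$-edge at that vertex, so $\epsilon=\{1\}$ is a singleton. Filling the $n=2$ $\epsilon$-horn then produces for every $a$ an element $a'$ with $a\oplus a'=1$; this gives existence of orthosupplements and simultaneously shows $a\leq 1$ for all $a$, so $1$ is the top element, while uniqueness of $a'$ is immediate from cancellation. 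This establishes axiom~(2).

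For axiom~(3) I would first record that the canonical order is antisymmetric: if $a\leq b\leq a$, writing $b=a\oplus x$ and $a=b\oplus y$ and cancelling yields $x\oplus y=0$, so $x$ is right-invertible, whence $x\in\epsilon^{\boxslash}=\eta=\{0\}$ by (iii) and $a=b$. Now if $a\oplus 1$ is defined, commutativity gives $1\leq a\oplus 1$, while $a\oplus 1\leq 1$ since $1$ is the top; antisymmetry forces $a\oplus 1=1=0\oplus 1$, and cancellation yields $a=0$. Since the comultiplication of a Frobenius algebra is determined by $(\mu,\epsilon)$, the reconstructed $(E,\oplus,0,1)$ recovers the original Frobenius structure, so $E$ is an effect algebra. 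The \textbf{main obstacle} is precisely axiom~(3): it is the only axiom that genuinely couples the ``$0$-side'' hypothesis $\epsilon^{\boxslash}=\eta$ with the ``$1$-side'' data coming from the $\epsilon$-horns, and it requires first extracting that $\epsilon$ is a singleton, that $1$ is a two-sided top, and that the canonical preorder is antisymmetric. Carrying out these deductions in the right order---singleton $\epsilon$, then top element, then antisymmetry via (iii), then (3)---is where the care lies.

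For the moreover equivalence I argue within the same (cancellative) setting. For (iii)$\wedge$(iv)$\implies$(v): as above (iv) gives $\epsilon=\{1\}$, so $\epsilon^{\boxslash}=1^{\boxslash}$, and (iii) together with (iv) says this equals the singleton $\eta$; thus the unique $e=1$ has $e^{\boxslash}$ a singleton. Conversely, for (v)$\implies$(iii)$\wedge$(iv), fix any $e\in\epsilon$ (nonempty by the $n=1$ $\epsilon$-horn). Every unit is its own right inverse, and since units correspond to degenerate edges one has $e\boxslash r$ for all $r\in\eta$ (equivalently by Theorem~\ref{thm:iv}), giving $\eta\subseteq e^{\boxslash}$; as $e^{\boxslash}$ is a singleton and $\eta\neq\emptyset$, we conclude $\eta$ is a singleton, which is (iv) and forces $\epsilon=\{1\}$. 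Then $\epsilon^{\boxslash}=1^{\boxslash}$ is a singleton containing the nonempty $\eta$, so $\epsilon^{\boxslash}=\eta$, which is (iii).
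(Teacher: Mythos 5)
Your proof is correct, and its skeleton matches the paper's: the forward direction through Proposition~\ref{prop:itoiii}/Theorem~\ref{thm:iv}, the backward direction by assembling the three effect-algebra axioms with Theorem~\ref{thm:iv} plus (iii) supplying the last one, and the moreover part resting on the fact that every unit lies in $e^\boxslash$. The differences are in mechanism, and they are worth recording. Where the paper twice invokes the rotation $\hat{\alpha}$ --- for the bijection between $\epsilon$ and $\eta$ (so that (iv) forces $\epsilon$ to be a singleton) and for the existence of orthosupplements --- you obtain both from the unique RLP against $\epsilon$-horns in Theorem~\ref{thm:char}(i) combined with the single-vertex observation; this is more self-contained, since the rotation is only cited from the literature and never constructed in the paper. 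For axiom (3) you detour through antisymmetry of the canonical order and the fact that $1$ is a top element, whereas the paper reaches $a=0$ from $1\perp a$ in one step via Proposition~\ref{prop:itoiii}(iii)$\Rightarrow$(i) and Theorem~\ref{thm:iv}; both arguments rest on exactly the same input, namely cancellativity together with (iii). One caveat: in the direction (v)$\Rightarrow$(iii)$\wedge$(iv), your parenthetical ``(equivalently by Theorem~\ref{thm:iv})'' is not legitimate, since that theorem assumes cancellativity, which is not among the hypotheses of the moreover statement; without (ii), right-invertibility does not, by the cited results, yield $e\boxslash r$. Your primary justification --- that $e\boxslash r$ holds because units are degenerate edges, so the required $3$-simplex can be produced directly (take $l=q$; its four faces are the two given $2$-simplices and degenerate ones, and $2$-coskeletality fills the boundary) --- is the correct general fact, and it is precisely the assertion $\epsilon\boxslash\eta$ on which the paper's own proof of this step rests without further justification; simply delete the parenthetical or confine it to the cancellative setting.
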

\begin{proof}
    Assume first \(E\) is an effect algebra. Statements (i--ii) and (iv) obviously hold. By Proposition~\ref{prop:itoiii}, each element $a\in E$ satisfying $\epsilon\boxslash a$ has a right inverse. Since \(E\) is an effect algebra, such $a$ need to be \(0\).

    Next assume a Frobenius algebra satisfying (i--iv). It induces a cancellative partial commutative monoid $(E,\oplus,0)$, where $0$ is the unique element in $\eta$. In each Frobenius algebra, counit elements are in bijection (induced by the rotation \(\hat{\alpha}\)) with unit elements. Hence, (iv) implies $\epsilon$ has a unique element; denote it $1$.
    
    Orthosupplements are given by the rotation \(\hat{\alpha}\). It remains to verify that for each $a\in E$, if \(1\oplus a\) is defined, then $a=0$. But this is immediate from Theorem~\ref{thm:iv} and the assumption (iii).

    Finally, let us verify (iii)\(\wedge\)(iv) iff (v). Assume (v) holds. From \(\epsilon\boxslash\eta\) we deduce at once (iii) and (iv). In order to show the opposite implication, assume $e\in \epsilon$ and some $a\in E$ such that $e\boxslash a$. It is enough to show $a\in\eta$. By (iv), $\epsilon$ is a singleton, hence equal to $\{e\}$. Consequently, $\epsilon\boxslash a$, and (iii) entails $a\in \eta$.  
\end{proof}
Let us reveal the meaning of \(\boxslash\) in the case of effect algebras. 
\begin{lemma}\label{lem:charBoxSlash}
    Assume an effect algebra \(E\) and two elements \(a,b\in E\). We have \(a\boxslash b\) iff \(a\perp b\) and \(a\oplus b=a\vee b\).
\end{lemma}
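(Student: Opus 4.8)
The plan is to first translate the simplicial condition $a\boxslash b$ into a purely algebraic statement, and then read off the join characterization. Since $N(E)$ is $2$-coskeletal (property (ii) of Theorem~\ref{thm:char}), a $3$-simplex of $N(E)$ is uniquely determined by, and exists as soon as one supplies, four compatible $2$-faces. In the diagram~\eqref{diag:ortho} the premise $\mu\colon(q,a)\relto d$ and $\mu\colon(b,p)\relto d$ furnishes exactly the two faces $\partial_2$ and $\partial_1$ (on the vertex sets $\{0,1,3\}$ and $\{0,2,3\}$), which in the effect-algebra language read $a\oplus q=d$ and $p\oplus b=d$. The two remaining faces $\partial_3$ and $\partial_0$ are present precisely when there is a single edge $l$ with $a\oplus l=p$ and $l\oplus b=q$. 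Hence, unwinding Definition~\ref{def:internalBoxslash}, I claim $a\boxslash b$ is equivalent to the implication: for all $p,q,d$ with $a\oplus q=d=p\oplus b$ there exists $l\in E$ with $a\oplus l=p$ and $l\oplus b=q$. Commutativity of $E$ lets me ignore the order of arguments throughout.

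Granting this reformulation, I would prove the two implications. For the forward direction, observe that a triple $p,q,d$ satisfying the premise is nothing but a common upper bound $d$ of $a$ and $b$, exhibited by $q$ (with $a\oplus q=d$, so $a\leq d$) and $p$ (with $p\oplus b=d$, so $b\leq d$); conversely $1$ is always such an upper bound. Feeding any such $d$ into $a\boxslash b$ returns an $l$ with $a\oplus l=p$ and $l\oplus b=q$, so that $d=a\oplus q=a\oplus(l\oplus b)=(a\oplus b)\oplus l$ after commuting and reassociating. In particular $a\oplus b$ is defined, i.e.\ $a\perp b$, and $a\oplus b\leq d$. As $a\oplus b$ is itself an upper bound of $\{a,b\}$, it is the least one, giving $a\oplus b=a\vee b$.

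For the converse, assume $a\perp b$ and $a\oplus b=a\vee b$, and take any $p,q,d$ with $a\oplus q=d=p\oplus b$. As above $d$ is a common upper bound, so $a\oplus b=a\vee b\leq d$, yielding $l$ with $d=(a\oplus b)\oplus l$. Rebracketing, $a\oplus q=d=a\oplus(l\oplus b)$, and cancellation gives $q=l\oplus b$; likewise $p\oplus b=d=(a\oplus l)\oplus b$, and cancellation gives $p=a\oplus l$. These are exactly the two missing faces, so by $2$-coskeletality the required $3$-simplex exists and $a\boxslash b$. The main obstacle is the bookkeeping of the first paragraph: identifying which $2$-faces of~\eqref{diag:ortho} the hypotheses and the sought edge $l$ correspond to, and verifying that each rebracketing is legitimate in a \emph{partial} commutative monoid (every rearrangement must keep all intermediate sums defined, which is where partial associativity and cancellation are used). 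Once that translation is pinned down, both implications reduce to short applications of associativity, cancellation, and the definition of $\leq$.
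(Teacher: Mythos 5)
Your proof is correct and takes essentially the same route as the paper's: both reduce \(a\boxslash b\) (via \(2\)-coskeletality) to the purely algebraic statement ``for all \(p,q,d\) with \(a\oplus q=d=p\oplus b\) there exists \(l\) with \(a\oplus l=p\), \(l\oplus b=q\)'', obtain \(a\perp b\) from the instance \(d=1\), get \(a\oplus b\leq d\) for a general upper bound \(d\), and in the converse take \(l=d\ominus(a\oplus b)\) and verify the two missing faces by cancellation. Your explicit bookkeeping of which \(2\)-faces of the tetrahedron correspond to which equations is simply a more detailed rendering of what the paper's proof leaves implicit.
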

\begin{proof}
    Assume \(a\boxslash b\) and consider the following situation:
    \begin{equation}\label{diag:orthoEA1}
        \begin{tikzcd}
0\arrow[d,"a" {description}]\arrow[r,"b'" {description}]\arrow[rd,Tarrow,"1" {description}]&2\arrow[d,"b" {description}]\\  
1\arrow[r,"a'" {description}]&3       
        \end{tikzcd}
    \end{equation}
    For \(d=1\), a solution gives \(l\), such that \(a\oplus l\oplus b=1\), which gives \(a\perp b\).  In general situation, the following
     data, essentially encodes just upper bound \(a,b\leq d\):
\begin{equation}\label{diag:orthoEA2}
        \begin{tikzcd}
0\arrow[d,"a" {description}]\arrow[r]\arrow[rd,"d" {description}]&2\arrow[d,"b" {description}]\\  
1\arrow[r]&3       
        \end{tikzcd}
    \end{equation}
The presence of solution to \eqref{diag:orthoEA2} gives us \(l\) such that  \(a\oplus b\oplus l=d\). That is \(a\oplus b\leq d\).

    Conversely, assume \(a\perp b\) and \(a\oplus b=a\vee b\). Then for any lifting problem~\eqref{diag:orthoEA2} we can set \(l=d\ominus (a\oplus b)\). For such \(l\) we have
    \begin{align*}
        l\oplus a\;=\;& (d\ominus (a\oplus b))\oplus a\;=\;d\ominus b,\\
        l\oplus b\;=\;& (d\ominus (a\oplus b))\oplus b\;=\;d\ominus a.\\
    \end{align*}
\end{proof}
\begin{theorem}
    An effect algebra is 
    \begin{enumerate}
        \item orthomodular if and only if \(\perp\,=\,\boxslash\);
        \item orthoalgebra if and only if \(\alpha\subseteq \boxslash\).    
    \end{enumerate}
\end{theorem}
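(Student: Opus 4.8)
The plan is to read both equivalences off Lemma~\ref{lem:charBoxSlash}, which states that $a\boxslash b$ holds iff $a\perp b$ and $a\oplus b=a\vee b$. In particular $a\boxslash b$ always implies $a\perp b$, so $\boxslash\subseteq\perp$ in every effect algebra; hence in each item the nontrivial content is the reverse inclusion.

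For item~(1), note that $\perp=\boxslash$ is equivalent to $\perp\subseteq\boxslash$, i.e.\ to the implication ``$a\perp b\Rightarrow a\boxslash b$'' for all $a,b$. Since the hypothesis $a\perp b$ already supplies the first conjunct in Lemma~\ref{lem:charBoxSlash}, this is exactly the assertion that $a\oplus b=a\vee b$ whenever $a\perp b$, which is the definition of orthomodularity. Thus item~(1) needs no further argument.

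For item~(2), I would first unwind $\alpha$. For an effect algebra the element $\alpha(a)$ is characterised by $\mu\colon(a,\alpha(a))\dashmapsto e$ with $e\in\epsilon=\{1\}$, i.e.\ $a\oplus\alpha(a)=1$; hence $\alpha(a)=a'$ and $\alpha$ is the graph $\{(a,a')\mid a\in E\}$ of orthosupplementation. So $\alpha\subseteq\boxslash$ means $a\boxslash a'$ for every $a$. As $a\perp a'$ and $a\oplus a'=1$ hold automatically, Lemma~\ref{lem:charBoxSlash} reduces $a\boxslash a'$ to the single condition $a\vee a'=1$. Item~(2) is therefore equivalent to the purely order-theoretic claim that $E$ is an orthoalgebra iff $a\vee a'=1$ for all $a\in E$, and this is what I would prove.

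For that claim, the direction from orthoalgebra to $a\vee a'=1$ runs as follows: $1$ is an upper bound of $\{a,a'\}$, and if $d$ is any upper bound, then applying orthosupplementation (which reverses $\leq$) to $a\le d$ and $a'\le d$ yields $d'\le a'$ and $d'\le a$; writing $a=d'\oplus u$ and $a'=d'\oplus v$ and using that $a\oplus a'=1$ is defined forces $d'\oplus d'$ to be defined, so $d'\perp d'$ and hence $d'=0$ by the orthoalgebra law, i.e.\ $d=1$. For the converse, if $a\perp a$ then $a\le a'$ (cancel one $a$ in $a\oplus a\oplus s'=1=a\oplus a'$, where $s'$ is the orthosupplement of $a\oplus a$), so $a'$ is itself an upper bound of $\{a,a'\}$; the hypothesis $a\vee a'=1$ then gives $1\le a'$, whence $a'=1$ and $a=0$. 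The main obstacle is precisely this order-theoretic equivalence --- with the forcing of $d'\perp d'$ through the defined sum $a\oplus a'$ being its crux --- since all the simplicial and relational content has already been absorbed into Lemma~\ref{lem:charBoxSlash}.
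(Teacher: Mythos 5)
Your proposal is correct and takes essentially the same route as the paper: both items are read directly off Lemma~\ref{lem:charBoxSlash}, with item~(1) reducing to the definition of orthomodularity and item~(2), after identifying $\alpha$ with orthosupplementation, reducing to the characterization of orthoalgebras by $a\vee a'=1$. The only difference is that the paper cites this last characterization as a known fact, whereas you supply a full (and correct) order-theoretic proof of it, including the key step that a common upper bound $d$ of $a,a'$ forces $d'\perp d'$.
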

\begin{proof}
        According to Lemma~\ref{lem:charBoxSlash}, \(\perp\,=\,\boxslash\) iff for all \(a\perp b\) we have \(a\oplus b=a\vee b\). The last is characterisation of orthomodular posets among effect algebras.
Similarly, orthoalgebras are characterised by the property \(a\oplus a'=a\vee  a'=1\). 
\end{proof}
As shown in~\cite{MZ2020,My}, for each $F \in \RelFA$, the simplicial set $N(E)$ admits a $\mathbb{Z}$-action on $n$-simplices ($n \geq 0$), compatible with face and degeneracy maps, called \emph{rotation}. Orthomodular effect algebras are characterized among effect algebras via a lifting problem on \(3\)-simplices of $N(E)$. Thus we may examine its four rotations~\eqref{eq:rotationsOrthomodularLaw}, where each case, together with \(a \perp b\) and the two simplices, yields three assumptions which are written under the corresponding diagram.

\begin{equation}\label{eq:rotationsOrthomodularLaw}
\begin{array}{cccc}
        \begin{tikzcd}[row sep={5em,between origins}, column sep={5em,between origins},ampersand replacement=\&]
0\arrow[d,"a" {description}]\arrow[r]\arrow[rd,"d" {description},pos=0.75]\&2\arrow[d,"b" {description}]\\  
1\arrow[ru,dashed]\arrow[r]\&3       
        \end{tikzcd}&
          \begin{tikzcd}[row sep={5em,between origins}, column sep={5em,between origins},ampersand replacement=\&]
1\arrow[d,dashed]\arrow[r]\arrow[rd,"a'"{description},pos=0.75]\&3\arrow[d,"d'" {description}]\\  
2\arrow[ru,"b"{description},pos=0.75]\arrow[r]\&4         
        \end{tikzcd}
    &
          \begin{tikzcd}[row sep={5em,between origins}, column sep={5em,between origins},ampersand replacement=\&]
2\arrow[d,"b" {description}]\arrow[r]\arrow[rd,dashed]\&4\arrow[d,"a" {description}]\\  
3\arrow[ru,"d'"{description},pos=0.75]\arrow[r]\&5         
        \end{tikzcd}&
          \begin{tikzcd}[row sep={5em,between origins}, column sep={5em,between origins},ampersand replacement=\&]
3\arrow[d,"d'" {description}]\arrow[r]\arrow[rd,"b'"{description,pos=0.75}]\&5\arrow[d,dashed]\\  
4\arrow[ru,"a"{description},pos=0.75]\arrow[r]\&6         
        \end{tikzcd}
    \\
    a\perp b,  & b\perp d',& a\perp b,b\perp d',& d'\perp a, \\
a,b\leq d & b,d'\leq a'&d'\perp a &a,d'\leq b'
\end{array}
    \end{equation}
See that the first two and the last one lead to the same property, only the roles of \(a\), \(b\), and \(d\) (respective corresponding orthosupplements) are rotated.
While the third one leads to familiar \emph{coherence law} of orthomodular posets:
\[a\perp b,b\perp c,c\perp a\implies a\oplus b\oplus c \text{\ exists.}\]


\section{Homology group}

In the theory of effect algebras, there is an important adjunction between $\EA$ and the category of partially ordered Abelian groups with order unit $\POG_u$ (see~\cite{DvPu}):
\[
\begin{tikzcd}[column sep=huge, row sep=large]
\mathbf{EA}
  \arrow[r, bend left=30, "U"]
  \arrow[r, phantom, "\dashv" description]
&
\mathbf{POG}_u
  \arrow[l, bend left=30, "\Gamma"]
\end{tikzcd}
\]
Here \(U\) associates to an effect algebra \(E\) its universal group \((\Gamma(E),1)\), while \(\Gamma\) is the Mundici functor sending \((A,u)\in\POG_u\) to the interval effect algebra \([0,u]_A\). In the construction of the universal group, one can already recognize the first homology group.

Recall the construction of \emph{the first homology group of a simplicial set} \(X\) (see, e.g., \cite[Ch.~3]{GJ99}). For each \(n\in\mathbb{N}\), set \(C_n(X)\) to be the free Abelian group generated by \(X_n\). Equivalently, \(C_n(X)\cong \mathbb{Z}^{(X_n)}\), the group of finitely supported functions from \(X_n\) to \(\mathbb{Z}\).
The \emph{boundary map} \(\partial_n\colon C_n(X)\to C_{n-1}(X)\) is defined on generators $[a]$, $a\in X_n$, by
\[
[a]\mapsto \sum_{i=0}^n (-1)^i[d^n_i(a)].
\]
These maps satisfy \(\partial_{n-1}\circ\partial_n=0\) for all \(n\). Hence, we can define the homology group \(H_n(X)\) as the quotient \(\ker(\partial_n)/\operatorname{im}(\partial_{n+1})\).

\begin{theorem}
    Let \(E\) be an effect algebra. Then the universal group \(\Gamma(E)\) is, as an Abelian group, isomorphic to the first homology group \(H_1(N(E))\).
\end{theorem}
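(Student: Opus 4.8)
The plan is to compute $H_1(N(E))$ directly from the chain complex and to recognise the resulting presentation as the standard one for the universal group. First I would record the low-dimensional simplices of $N(E)$: as recalled in the Example following Proposition~\ref{prop:monoidalFunctor}, $N(E)$ has a single vertex, its edges are in bijection with the elements of $E$ (the degenerate edge corresponding to $0$), and its $2$-simplices correspond to orthogonal pairs, i.e.\ pairs $(a,b)$ with $a\oplus b$ defined. Consequently $C_0(N(E))\cong\mathbb{Z}$, the group $C_1(N(E))\cong\mathbb{Z}^{(E)}$ is free abelian on the underlying set of $E$, and $C_2(N(E))$ is free on the composable pairs.

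Next I would compute the two relevant boundary maps. Since $N(E)$ has a single vertex, the two faces of every edge coincide, so $\partial_1=0$ and therefore $\ker(\partial_1)=C_1(N(E))=\mathbb{Z}^{(E)}$. For $\partial_2$, I would invoke the face conventions fixed after Theorem~\ref{thm:char}: a $2$-simplex $\sigma$ with $d_0\sigma=a$ and $d_2\sigma=b$ has $d_1\sigma=a\oplus b$, whence $\partial_2[\sigma]=[a]-[a\oplus b]+[b]$. In particular the degenerate $2$-simplices $s_0(a),s_1(a)$, which represent the orthogonal pairs involving $0$, contribute only the element $[0]$. Hence $\operatorname{im}(\partial_2)$ is exactly the subgroup generated by $\{[a]+[b]-[a\oplus b] : a\perp b\}$, which already contains $[0]=[0]+[0]-[0]$. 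Putting these together yields
\[
H_1(N(E)) \;=\; \ker(\partial_1)\big/\operatorname{im}(\partial_2) \;\cong\; \mathbb{Z}^{(E)}\big/\big\langle\,[a]+[b]-[a\oplus b] : a\perp b\,\big\rangle.
\]

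Finally I would match this quotient with the universal group. The construction of $\Gamma(E)$ (see~\cite{DvPu}) produces precisely the free abelian group on $E$ modulo the relations $a+b=a\oplus b$ for all orthogonal pairs; equivalently, $\Gamma(E)$ is initial among abelian groups $G$ equipped with a map $\phi\colon E\to G$ satisfying $\phi(a\oplus b)=\phi(a)+\phi(b)$ whenever $a\perp b$. To conclude I would check that $H_1(N(E))$ together with the assignment $a\mapsto[a]$ enjoys this universal property: any such $\phi$ extends uniquely along the free generators to $\mathbb{Z}^{(E)}$ and, by the additivity relations, factors through $\operatorname{im}(\partial_2)$, giving a unique factorisation through $H_1(N(E))$. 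This exhibits the canonical isomorphism $\Gamma(E)\cong H_1(N(E))$ sending $\gamma(a)$ to the class $[a]$, and the same description makes it natural in $E$.

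The computation is genuinely routine; the only points requiring care are the bookkeeping conventions---pinning down which face of a $2$-simplex carries $a\oplus b$, and verifying that the degenerate $2$-simplices add no relation beyond $[0]=0$---and stating the universal property of $\Gamma(E)$ precisely enough that the identification is canonical (in particular that the generator $[0]$ is forced to vanish on both sides). I do not expect a real obstacle here, in line with the remark that the observation, though apparently new in the effect-algebra community, is essentially immediate once the two presentations are written side by side.
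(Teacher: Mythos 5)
Your proposal is correct and follows essentially the same route as the paper: both arguments observe that the single vertex forces $\ker(\partial_1)=C_1(N(E))\cong\mathbb{Z}^{(E)}$, identify $\operatorname{im}(\partial_2)$ as the subgroup generated by $[a]-[a\oplus b]+[b]$ for orthogonal pairs, and recognize the resulting quotient as the standard presentation of the universal group from~\cite{DvPu}. The extra bookkeeping you supply (the degenerate $2$-simplices contributing only $[0]$, and the verification of the universal property) is a more careful spelling-out of what the paper treats as immediate, not a different argument.
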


\begin{proof}
    Unwinding the definition of \(H_1(N(E))\): since each edge \(a\in N(E)_1\) is a loop (i.e.\ \(s(a)=t(a)\)), we have
    \(\ker(\partial_1)=C_1(N(E))\cong \mathbb{Z}^{(E)}\).
    The image of \(\partial_2\) is the subgroup of \(\mathbb{Z}^{(E)}\) generated by \([a]-[c]+[b]\) for all \(a,b,c\in N(E)_1 = E\) with \(c=a\oplus b\).
    Thus the definition of \(H_1(N(E))\) coincides with that of the universal group described in~\cite{DvPu}.
\end{proof}
\section{Mapping space}\label{sec:5}
Since $\epsilon$-simplicial sets form a presheaf category, for each pair of $\epsilon$-simplicial sets $X$ and $Y$ there is an $\epsilon$-simplicial set $[X,Y]$ (also denoted $Y^X$) such that for each $Z$ we have the adjunction
\begin{equation}\label{eq:inHom}
 \begin{split}
        Z\to [X,Y]\\
        \hline
         Z\times X\to Y.
    \end{split}
\end{equation}
By Yoneda’s lemma,
\begin{align*}
[X,Y](n)&=\Hom(\Delta^n,[X,Y])=\Hom(\Delta^n\times X,Y),\\
[X,Y](\epsilon)&=\Hom(\Sim^1,[X,Y])=\Hom(\Sim^1\times X,Y).
\end{align*}
It is well known that for a pair of (quasi-)groupoids, their mapping space is again a (quasi-)groupoid. A similar result holds for effect algebras.

\begin{theorem}\label{thm:mappingSpace}
    Let $E,F$ be effect algebras. Then there exists a Frobenius algebra $G$, a coproduct (disjoint union) of effect algebras in $\RelFA$, such that
    \begin{equation}
        N(G)\cong[N(E),N(F)].
    \end{equation}
    Moreover, for the trivial one-element effect algebra $\underline{0}$, $N(\underline{0})$ is the terminal object in $\eSSet$, hence the monoidal unit.
\end{theorem}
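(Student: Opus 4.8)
The plan is to deduce the statement from the lifting-property characterization of nerves of Frobenius algebras (Theorem~\ref{thm:char}) together with the internal-hom adjunction~\eqref{eq:inHom}. By Theorem~\ref{thm:char} it is enough to show that $[N(E),N(F)]$ has the unique right lifting property against the $\epsilon$-horns $\ELambda^n_0,\ELambda^n_n\subset\Sim^n$ and the cells $\partial\Delta^n\subset\Delta^n$ ($n\geq 3$), together with the (plain) right lifting property against the associativity inclusion $\partial_0\Delta^3\cup\partial_2\Delta^3\subset\Delta^3$. The key observation is that for any monomorphism $A\hookrightarrow B$ the exponential transpose turns a lifting problem of $[N(E),N(F)]$ (against the terminal object) into one for $N(F)$:
\[
(A\hookrightarrow B)\,\boxslash\,[N(E),N(F)]\quad\Longleftrightarrow\quad(A\times N(E)\hookrightarrow B\times N(E))\,\boxslash\,N(F),
\]
and the lift is unique on one side exactly when it is unique on the other. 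Thus each of the three conditions for $[N(E),N(F)]$ reduces to a property of $N(F)$ — which Theorem~\ref{thm:char} already grants — tested against the product of the relevant generator with $N(E)$.

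First I would filter the map $A\times N(E)\hookrightarrow B\times N(E)$ by the skeleta of $N(E)$: pushing out along $\mathrm{sk}_{k-1}N(E)\hookrightarrow\mathrm{sk}_kN(E)$ presents it as a transfinite composite of pushouts of the pushout-products $(A\hookrightarrow B)\boxprod(\partial\Delta^k\hookrightarrow\Delta^k)$, one for each nondegenerate $k$-simplex of $N(E)$, with the $\epsilon$-marking on the $\Delta^k$-factor inherited from that simplex. It then remains to place each such pushout-product inside the weakly saturated class generated by the generators of Theorem~\ref{thm:char} for which $N(F)$ admits the appropriate lift. Here one uses that the class of monomorphisms against which a fixed $N(F)$ has the unique right lifting property is itself weakly saturated (closed under pushout, retract, transfinite composition and coproduct), and likewise for the plain right lifting property used for associativity; hence it suffices to treat one generator at a time. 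The two-cells case is already covered by Lemma~\ref{lem:fillingCells}: a skeletal count of $\Delta^n\times\Delta^k$ shows $(\partial\Delta^n\hookrightarrow\Delta^n)\boxprod(\partial\Delta^k\hookrightarrow\Delta^k)\in\overline{\Cell_{\geq 3}}$ whenever $n\geq 3$.

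The hard part will be the remaining pushout-product computations — the $\epsilon$-simplicial analogue of the classical fact that the pushout-product of an anodyne map with a cofibration is anodyne. The difficulty is twofold: one must carry out the bookkeeping of the $\epsilon$-markings that $N(E)$ contributes to $\Sim^n\times N(E)$ (an edge of a product is an $\epsilon$-edge precisely when both of its coordinates are), so that a pushout-product of an $\epsilon$-horn with a cell genuinely lands in $\overline{\{\epsilon\text{-horns}\}\cup\Cell_{\geq 3}}$, and similarly for associativity; and one must track uniqueness of fillers, not merely their existence, along the whole filtration. These are exactly the technical lemmas collected for Section~5.

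Granting this, $[N(E),N(F)]\cong N(G)$ for some Frobenius algebra $G$, and it remains to identify $G$ as a disjoint union of effect algebras. I would first note that commutativity of $\mu_F$ is inherited pointwise by $\mu_G$, and that in a commutative monoid in $\Rel$ the unit laws force $s(a)=t(a)$ for every element $a$; hence every edge of $N(G)$ is a loop and each connected component of $N(G)$ is reduced. Writing $G=\coprod_i G_i$ for the decomposition into connected components (which matches the decomposition of $[N(E),N(F)]$ into connected components), each $G_i$ has a single unit and a single counit. Finally I would check that each $G_i$ is an effect algebra by the characterization of effect algebras among Frobenius algebras: commutativity holds by the above; cancellation and partiality transfer from $F$ through the same adjunction-plus-pushout-product mechanism (the required fillings reduce to ordinary horn-fillings of $N(F)$, which an effect algebra enjoys); $\eta$ is a singleton since the component is reduced; and the axiom $\epsilon^{\boxslash}=\eta$ is transported from the corresponding effect-algebra axiom of $F$ by way of Proposition~\ref{prop:itoiii} and Theorem~\ref{thm:iv}. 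The concluding clause is immediate: $N(\underline{0})\cong\Sim^0$ is terminal in $\eSSet$ by~\eqref{eq:nervTer}, and it is the monoidal unit because $N$ is monoidal (Proposition~\ref{prop:monoidalFunctor}).
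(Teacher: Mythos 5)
Your overall strategy---transpose the lifting problems through the internal-hom adjunction~\eqref{eq:inHom} and reduce to pushout-product computations against the generators of Theorem~\ref{thm:char}---is sound in outline, and it is essentially the strategy the paper deploys in Section~6 for the harder non-commutative case. But the central claim on which your plan rests is false: the pushout-products produced by the skeletal filtration of $N(E)$ do \emph{not} land in $\overline{\{\epsilon\text{-horns}\}\cup\Cell_{\geq 3}}$ (even allowing the associativity generator), and this is not a matter of deferred bookkeeping. Consider the piece contributed by any non-marked nondegenerate edge of $N(E)$ (i.e.\ an element $a\neq 1_E$), namely
\[
m \;=\; (\ELambda^1_0\subset\Sim^1)\boxprod(\partial\Delta^1\subset\Delta^1).
\]
Since an edge of a product is an $\epsilon$-edge precisely when both coordinates are, the codomain $\Sim^1\times\Delta^1$ of $m$ carries \emph{no} $\epsilon$-edges at all: the product with an unmarked cell destroys the marking that makes $\epsilon$-horns fillable. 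Now unwind $m\boxslash N(F)$ for an effect algebra $F$: a map from the domain of $m$ is an arbitrary triple $\alpha,\beta_0,\beta_1\in F$ (the images of the edge $\{0\}\times\Delta^1$ and the two edges $\Sim^1\times\partial\Delta^1$, which are subject to no compatibility conditions), and a lift amounts to elements $\delta,\tau\in F$ with $\alpha\oplus\beta_1=\delta$ and $\beta_0\oplus\tau=\delta$. Taking $\alpha=\beta_1=1_F$ in any nontrivial $F$, no lift exists because $1\perp 1$ fails. Hence $m$ is not lifted against by $N(F)$, so it cannot belong to any weakly saturated class generated by maps that $N(F)$ does lift against---in particular not to the class generated by the generators of Theorem~\ref{thm:char}. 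Your induction therefore breaks at the very first attachment of an unmarked edge.

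The obstruction is structural: an $\epsilon$-edge of $[N(E),N(F)]$ over a vertex $h$ is globally constrained (the conjugating element must be $h(1_E)'$), so it cannot be built cell by cell over bare edges of $N(E)$. This is exactly why the paper's Section~6, which does run a relative version of your argument, (a) filters the inclusion $N(\underline{1})\subseteq N(E)$ rather than $\emptyset\subseteq N(E)$, using the maps $g_0,g_1,g_2$ of Proposition~\ref{prop:bigFun} that attach a new edge \emph{together with} a $2$-simplex bounding it by an already-present dominating edge, and (b) assumes lifting against braiding~\eqref{diag:braiding} and cancellativity of $F$ as hypotheses beyond Theorem~\ref{thm:char}. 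Your plan never invokes commutativity of $E$ and $F$ in the lifting-theoretic part, which is already a warning sign, since for general (non-braided) Frobenius algebras the conclusion is not available. Note finally that the paper's actual proof of Theorem~\ref{thm:mappingSpace} is entirely different and far more elementary: a direct computation of the vertices, edges, $\epsilon$-edges and higher simplices of $[N(E),N(F)]$, using commutativity and cancellation in $F$, which identifies the mapping space as the disjoint union $\coprod_h N([0,h(1_E)'])$ over all partial monoid morphisms $h\colon E\to F$; the lifting machinery is reserved for the pseudo effect algebra case, where it genuinely requires the additional apparatus described above.
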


\begin{proof}
 We will analyze the simplices of $[N(E),N(F)]$:
 \begin{enumerate}
 \item[(i)] \emph{Vertices:}
Observe that $\Delta^0\times N(E)$ is isomorphic to $N'(E)$, which arises from $N(E)$ be forgetting  all elements in $N(E)(\eps)$. 
 Hence, the vertices of $[N(E),N(F)]$ correspond to partial monoid morphisms $E\rightarrow F$ (which may not preserve top element). 

 \item[(ii)] \emph{Edges:}  
 Given $H\colon \Delta^1\times N(E)\to N(F)$, let $h_i$, $i=0,1$, be its restriction to $\{i\}\times N(E)$. By (i), we can identify $h_0,h_1$ with partial monoid morphisms $E\to F$.  
 Let $v$ be the unique vertex of $N(E)$ and set $x=H(\Delta^1\times\{v\})\in F$. For each $a\in N(E)_1=E$, the square $\Delta^1\times\{a\}$ maps to
\begin{equation}\label{diag:internalMap}
    \begin{tikzcd}
        H(\{1\}\times \{v\})\arrow[rr,"h_1(a)"]&&H(\{1\}\times \{v\})\\
        \\
        H(\{0\}\times \{v\})\arrow[rr,"h_0(a)"]\arrow[uu,"x"]\arrow[rruu,"H(\Delta^1\times \{a\})"]&&H(\{0\}\times \{v\})\arrow[uu,"x"']
    \end{tikzcd}
\end{equation}
so $h_1(a)\oplus x=x\oplus h_0(a)$. By commutativity and cancellativity in $F$, $h_0=h_1$ and $x\leq h_0(1)'$.  
In other words, all edges in $[N(E),N(F)]$ 
are loops and $[N(E),N(F)]$ decompose as the following coproduct (disjoint union) in $\eSSet$:
\begin{equation}\label{eq:coprodEdges}
    \coprod_{h\colon E\rightarrow F} M_h.
\end{equation}
Given a vertex corresponding to partial monoid morphism $h\colon E\rightarrow F$, 
the loops over $h$ correspond to elements of $[0,h(1_E)']\subseteq F$.

\item[(iii)] \emph{$\epsilon$-edges:}  Assume $H\colon \Sim^1\times N(E)\rightarrow N(F)$. The domain of $H$ has 
a unique $\epsilon$-edge $\Sim^1\times\{1_E\}$. Since $H$ preserves $\epsilon$-edges, the diagonal edge in~\eqref{diag:internalMap} necessarily equals $1_F$. 
Using the identification of edges over $h\colon E\rightarrow F$ with elements in $[0,h(1_E)']$ from the 
previous point, we see that the $\epsilon$-edge corresponds to $h(1_E)'$.
\item[(iv)] Following the same pattern, one can prove that for each partial monoid morphism $h\colon E\rightarrow F$, the corresponding $\epsilon$-simplicial set $M_h$ from \eqref{eq:coprodEdges} coincides with $N([0,h(1)'])$.
Since each $[0,h(1)']$ is an effect algebra, the desired $G$ equals the coproduct (disjoint union) of effect algebras in $\RelFA$.
\end{enumerate}
The “moreover” part is immediate.
\end{proof}

Now follows one of our main theorems.

\begin{theorem}\label{thm:enrichedEA}
    There exists a category $\EA$, enriched over the cartesian monoidal category $\RelFA$, whose objects are effect algebras and such that for each pair $E,F$ we have
    \begin{equation}
        N(\EA(E,F))\cong[N(E),N(F)].
    \end{equation}
    Moreover, the underlying category $\EA_0$ is isomorphic to the ordinary category of effect algebras.
\end{theorem}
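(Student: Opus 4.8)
The plan is to assemble the enriched category $\EA$ directly from the mapping-space data provided by Theorem~\ref{thm:mappingSpace}, and then verify that this assembly satisfies the axioms of a $\RelFA$-enriched category. For each pair of effect algebras $E,F$, Theorem~\ref{thm:mappingSpace} furnishes a Frobenius algebra $G_{E,F}\in\RelFA$ with $N(G_{E,F})\cong[N(E),N(F)]$; since the nerve functor $N$ is fully faithful (Proposition~\ref{prop:monoidalFunctor}), this $G_{E,F}$ is determined up to unique isomorphism, so I set $\EA(E,F):=G_{E,F}$ as the hom-object. The work is then to produce the composition morphisms $\EA(F,K)\times\EA(E,F)\to\EA(E,K)$ and identity elements $\underline{0}\to\EA(E,E)$ in $\RelFA$, and to check associativity and unitality.

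First I would obtain the composition morphism at the level of mapping spaces, where it is completely standard: for any three $\epsilon$-simplicial sets there is a canonical composition map $[N(F),N(K)]\times[N(E),N(F)]\to[N(E),N(K)]$ arising from the closed monoidal (internal-hom/product) structure of the presheaf category $\eSSet$, adjoint to the evaluation-composite $\bigl([N(F),N(K)]\times[N(E),N(F)]\bigr)\times N(E)\to N(K)$. I then transport this back through $N$. The crucial enabling fact is that $N$ preserves finite products (equation~\eqref{eq:nerveProduct}), so $N(\EA(F,K)\times\EA(E,F))\cong N(\EA(F,K))\times N(\EA(E,F))\cong[N(F),N(K)]\times[N(E),N(F)]$; composing with the mapping-space composition and the isomorphism $[N(E),N(K)]\cong N(\EA(E,K))$ yields a morphism of $\epsilon$-simplicial sets between two objects in the image of $N$. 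By full faithfulness of $N$, this descends to a unique morphism in $\RelFA$, which I declare to be the composition morphism. The identity elements are obtained the same way from the units $N(\underline{0})\cong\Sim^0\to[N(E),N(E)]$ picking out the identity morphism of $N(E)$.

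Associativity and unitality of these composition and identity morphisms then follow formally: the corresponding coherence diagrams in $\RelFA$ are sent by $N$ to diagrams in $\eSSet$ that commute because they are exactly the associativity and unit diagrams of the internal-hom composition in a closed monoidal presheaf category. Since $N$ is faithful and product-preserving, commutativity of the image diagrams forces commutativity of the original diagrams in $\RelFA$. This gives the enriched category $\EA$ with the required isomorphism $N(\EA(E,F))\cong[N(E),N(F)]$.

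The main obstacle, and the only genuinely substantive point, is the identification of the underlying category $\EA_0$ with the ordinary category of effect algebras. By the construction of $\mathcal{C}_0$ recalled in the preliminaries, $\Hom_{\EA_0}(E,F)=\Hom_{\RelFA}(\underline{0},\EA(E,F))=\Hom_{\eSSet}(\Sim^0,[N(E),N(F)])$, which is the set of \emph{vertices} of the mapping space. By part~(i) of the proof of Theorem~\ref{thm:mappingSpace}, these vertices are precisely the partial monoid morphisms $E\to F$ that need not preserve the top element $1$. The hard part is therefore twofold: I must check that the enriched composition restricts on vertices to the ordinary composition of such maps (which follows by chasing the definition of $\mathcal{C}_0$-composition through the evaluation map), and I must reconcile the apparent discrepancy that effect-algebra morphisms are required to preserve $1$ whereas vertices of the mapping space are only top-element-\emph{ignoring} partial monoid morphisms. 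I expect this to be resolved by observing that a partial monoid morphism $h$ arising as a vertex automatically satisfies the relevant structural constraints forcing $h$ to be an honest effect-algebra morphism once the $\epsilon$-structure (the counit, carrying the top element) is taken into account---recalling from Theorem~\ref{thm:mappingSpace}(iii) that the $\epsilon$-edge over $h$ encodes $h(1_E)'$---so that requiring the morphism in $\RelFA$ (equivalently the $\epsilon$-preserving structure) pins down preservation of $1$. Verifying this compatibility carefully is where the argument must be made precise rather than merely formal.
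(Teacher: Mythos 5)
Your proposal is correct and follows essentially the same route as the paper: transport the self-enrichment of $\eSSet$ (internal hom composition) through the fully faithful, product-preserving embedding $N$, using Theorem~\ref{thm:mappingSpace} to know the hom-objects lie in the image of $N$, and then identify the underlying category. The compatibility you leave as an expectation at the end is precisely the paper's one-line argument: an element of $\Hom_{\RelFA}(\underline{0},\EA(E,F))$ is a map out of $N(\underline{0})$ (the terminal object, whose degenerate loop is $\epsilon$-marked), not out of the unmarked point $\Delta^0$, so it must land on a vertex $h$ whose degenerate loop $0_F\in[0_F,h(1_E)']$ is the $\epsilon$-edge $h(1_E)'$ from Theorem~\ref{thm:mappingSpace}(iii), forcing $h(1_E)'=0_F$, i.e.\ $h(1_E)=1_F$.
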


\begin{proof}
    Effect algebras can be viewed as a category enriched over $\SSet^\epsilon$ via the embedding $N$.  
    By Proposition~\ref{prop:monoidalFunctor}, the nerve functor $N$ is a fully faithful monoidal embedding, and for each $E,F$ the $\epsilon$-simplicial set $[N(E),N(F)]$ lies (up to isomorphism) in the image of $N$.  
    Hence the $\SSet^\epsilon$-enrichment restricts to a $\RelFA$-enrichment.

    For the “moreover” part: by Theorem~\ref{thm:mappingSpace}, a morphism $\Sim^0\to[N(E),N(F)]$ corresponds to a partial monoid morphism $h\colon E\to F$ with $[0_F,h(1_E)']\cong \underline{0}$, i.e.\ $h(1_E)=1_F$. Thus $h$ is a morphism of effect algebras.
\end{proof}


\section{Some results from simplicial combinatorics}

As observed earlier, simplicial combinatorics provides a fruitful approach to effect algebras.  
In the next section, where we consider more general $\mathrm{FA}$ than effect algebras, we show that our algebraic structures of interest can be analyzed purely in the language of simplicial combinatorics.  
As preparation, we recall some notation and establish several technical lemmas.

Given morphisms $f\colon A_0\to A_1$ and $g\colon X_0\to X_1$ in $\mathcal{C}$, the morphism
\begin{equation}\label{eq:pushoutProd}
f \boxprod g \colon A_1 \times X_0 \coprod_{A_0 \times X_0} A_0 \times X_1 \longrightarrow A_1\times X_1
\end{equation}
is called the \emph{pushout product} or \emph{box product}. 

\begin{example}\label{ex:cup}
Let $n\geq 1$. Then 
\[
(\Delta^{\{0\}}\subset\Delta^{\{0,1\}})\boxprod (\partial\Delta^n\subset\Delta^n)\in 
\overline{\{\Lambda^{n+1}_i\mid 0 \leq n\}}.
\]
We will demonstrate the proof on the case $n=2$. Then $B:=\Delta^1\times \Delta^2$ is a prism with triangular base. The domain $A$ of the box product contains the lower base of $B$ and the three faces:
\begin{equation}\label{diag:prismEx}
    \begin{tikzcd}[row sep=9pt, column sep=15pt]
& 2'  & \\
  0' 
    \arrow[rr, bend right=5] 
    \arrow[ru] 
  & & 
  1' 
    \arrow[lu] \\
  &[10pt] & \\
  & 2 \arrow[uuu] & \\
  0 \arrow[uuuur]\arrow[uuurr]
  \arrow[uuu]
    \arrow[rr, bend right=5] 
    \arrow[ru] 
  & & 
  1 \arrow[uuuul]\arrow[uuu]
    \arrow[lu] 
\end{tikzcd}
\end{equation}
We can fill the prism in three steps, by filling the horns 
$\Lambda^{\{0,1,2,2'\}}_{2}$, 
$\Lambda^{\{0,1,1',2'\}}_{1}$, and 
$\Lambda^{\{0,0',1',2'\}}_{0}$.  
Hence $A\subset B$ is a composition of three pushouts of $\Lambda^3_i\subset \Delta^3$.  
\end{example}

The box product naturally arises in the following situation.  
For morphisms $f\colon A\to B$, $g\colon X\to Y$, and $h\colon M\to N$ in a cartesian closed category, the following lifting problems are equivalent via adjunction:
\begin{equation}\label{diag:3equivalentLifProb}
    \begin{tikzcd}[row sep=10pt, column sep=13pt]
        A\arrow[d,"f"]\arrow[r]&M^Y\arrow[d]&A\times Y\coprod_{A\times X} B\times X \arrow[r]\arrow[d,hook]&M\arrow[d,"h"] & X\arrow[d,"g"]\arrow[r]&{M^B}\arrow[d]\\
         B\arrow[ru,dotted]\arrow[r]&{M^X\times_{N^X}N^Y}&B\times Y \arrow[ru,dotted]\arrow[r]& N& Y\arrow[ru,dotted]\arrow[r]&{M^A\times_{N^A}N^B}
    \end{tikzcd}
\end{equation}
In each version of~\eqref{diag:3equivalentLifProb}, exactly one of $f,g,h$ stands alone.  
This is the key idea in the proof of the following lemma:

\begin{lemma}[Rezk, Prop.~19.9.]\label{lem:boxAndClosure}
Let $\mathcal{K},\mathcal{L}$ be classes of morphisms of a cartesian closed category with pushouts. Then
\[
\overline{\mathcal{K}}\boxprod\overline{\mathcal{L}}\subseteq \overline{\mathcal{K}\boxprod\mathcal{L}}.
\]
\end{lemma}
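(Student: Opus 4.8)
The plan is to unfold the weak saturation on the right-hand side and reduce the statement to a single-morphism lifting assertion. Fix $h$ in the orthogonality class $(\mathcal{K}\boxprod\mathcal{L})^\boxslash$; it then suffices to prove that $f\boxprod g$ has the left lifting property with respect to $h$ for every $f\in\overline{\mathcal{K}}$ and $g\in\overline{\mathcal{L}}$, since this gives $f\boxprod g\in{}^\boxslash\big((\mathcal{K}\boxprod\mathcal{L})^\boxslash\big)=\overline{\mathcal{K}\boxprod\mathcal{L}}$. The essential tool is the adjunction recorded in~\eqref{diag:3equivalentLifProb}: writing $\langle g,h\rangle$ for the pullback-hom morphism $M^{Y}\to M^{X}\times_{N^{X}}N^{Y}$ attached to $g\colon X\to Y$ and $h\colon M\to N$, and $\langle f,h\rangle$ for the analogous morphism $M^{B}\to M^{A}\times_{N^{A}}N^{B}$ attached to $f\colon A\to B$, the three equivalent lifting problems yield
\[
(f\boxprod g)\boxslash h \iff f\boxslash\langle g,h\rangle \iff g\boxslash\langle f,h\rangle.
\]
This lets me trade a box product against $h$ for a single morphism against a derived morphism, and it is what makes the two-variable closure argument work.

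First I would fix a generator $g\in\mathcal{L}$ and deduce that $\langle g,h\rangle\in\mathcal{K}^\boxslash$. Indeed, for every $k\in\mathcal{K}$ we have $k\boxprod g\in\mathcal{K}\boxprod\mathcal{L}$, so $k\boxprod g\boxslash h$ by the choice of $h$, and by the equivalence above this is exactly $k\boxslash\langle g,h\rangle$. Consequently every $f\in\overline{\mathcal{K}}={}^\boxslash(\mathcal{K}^\boxslash)$ satisfies $f\boxslash\langle g,h\rangle$, i.e.\ $f\boxprod g\boxslash h$. As $g\in\mathcal{L}$ was arbitrary, this establishes the intermediate inclusion $\overline{\mathcal{K}}\boxprod\mathcal{L}\subseteq\overline{\mathcal{K}\boxprod\mathcal{L}}$.

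Then I would run the symmetric step to absorb the closure in the second variable. Fixing now $f\in\overline{\mathcal{K}}$, the intermediate inclusion gives $f\boxprod l\boxslash h$ for every $l\in\mathcal{L}$; by the equivalence on the other side this reads $l\boxslash\langle f,h\rangle$, so $\langle f,h\rangle\in\mathcal{L}^\boxslash$. Hence every $g\in\overline{\mathcal{L}}={}^\boxslash(\mathcal{L}^\boxslash)$ satisfies $g\boxslash\langle f,h\rangle$, which again is $f\boxprod g\boxslash h$. Since $h\in(\mathcal{K}\boxprod\mathcal{L})^\boxslash$ was arbitrary, I conclude $f\boxprod g\in\overline{\mathcal{K}\boxprod\mathcal{L}}$, as desired.

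The only genuinely delicate point is the bookkeeping of the pullback-hom adjunction in~\eqref{diag:3equivalentLifProb}: one must confirm that the cartesian closed structure together with the existence of pushouts makes all three lifting problems literally equivalent, so that the pullback $M^{X}\times_{N^{X}}N^{Y}$ and the pushout in the domain of $f\boxprod g$ are genuine adjoint mates. Once this naturality is in hand, no extra hypothesis on $\mathcal{K}$ or $\mathcal{L}$ is needed: the argument is a purely formal two-step transfer along the Galois connection induced by $\boxslash$, and in particular never requires $\mathcal{K}$ or $\mathcal{L}$ to be weakly saturated in advance.
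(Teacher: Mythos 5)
Your proof is correct and follows exactly the route the paper intends: the paper gives no self-contained proof (deferring to Rezk, Prop.~19.9) but explicitly flags the key idea—that in the three adjointly equivalent lifting problems of~\eqref{diag:3equivalentLifProb} exactly one of $f,g,h$ stands alone—and your two-step Galois-connection transfer (first absorbing $\overline{\mathcal{K}}$ against generators of $\mathcal{L}$, then symmetrically absorbing $\overline{\mathcal{L}}$) is precisely the standard argument built on that idea. The only caveat, shared with the paper's own formulation, is that forming the pullback-hom $M^{X}\times_{N^{X}}N^{Y}$ needs pullbacks in addition to the stated hypotheses, which is harmless in the presheaf categories where the lemma is applied.
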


Let $0\leq i<n$. A. Joyal showed that $\Lambda^n_i\subset \Delta^n$ is a retract of $(\Delta^{\{0\}}\subset \Delta^{\{0,1\}})\boxprod(\Lambda^n_i\subset \Delta^n)$ (see~\cite{Joyal2008}, Lem.~2.25).   
Consequently,
\begin{align*}
    (\Lambda^n_i\subset \Delta^n)\boxprod \mathrm{Cell} 
    &\subseteq \overline{(\Delta^{\{0\}}\subset \Delta^{\{0,1\}})\boxprod(\Lambda^n_i\subset \Delta^n)\boxprod\mathrm{Cell}}\\
    &\subseteq \overline{(\Delta^{\{0\}}\subset \Delta^{\{0,1\}})\boxprod \mathrm{Cell}}
    \subseteq \overline{\mathrm{Horn}}.
\end{align*}
For the case $i=n$, the same argument works with $\Delta^{\{1\}}\subset\Delta^{\{0,1\}}$.
We will consider simplicial sets, that admit fillings of $\Lambda^n_i$, only for $n\geq 3$. 
For this reason, we will need more subtle modification of the argument above.
\begin{lemma}\label{lem:joyal}
Let $1\leq n$ and $0\leq k$. Then
\begin{equation}\label{eq:joyal}
    \mathrm{Horn}_{\geq n}\boxprod\mathrm{Cell}_{\geq k}\subseteq \overline{\mathrm{Horn}}_{\geq\max\{k+1,n\}}.
\end{equation}
\end{lemma}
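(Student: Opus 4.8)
The plan is to reduce to a single horn and a single cell, and then to track dimensions in an explicit horn-filtration of the resulting box product. Since $\boxprod$ distributes over the unions that define $\Horn_{\geq n}$ and $\Cell_{\geq k}$, and since a class lies in a closure $\overline{(-)}$ as soon as each of its members does, it suffices to prove, for all $p\geq n$, all $0\leq i\leq p$ and all $q\geq k$, that
\[
(\Lambda^p_i\subset\Delta^p)\boxprod(\partial\Delta^q\subset\Delta^q)\in\overline{\Horn_{\geq\max\{k+1,n\}}}.
\]
Writing $A=\Lambda^p_i\times\Delta^q\cup\Delta^p\times\partial\Delta^q$ and $B=\Delta^p\times\Delta^q$, this is the inclusion $A\hookrightarrow B$. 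The computation preceding the lemma already gives $A\hookrightarrow B\in\overline{\Horn}$, so the only genuinely new content is the lower bound $\max\{k+1,n\}$ on the dimensions of the horns that occur.

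First I would isolate the nondegenerate simplices of $B$ not in $A$. Such a simplex is a strictly increasing chain $(a_0,b_0)<\cdots<(a_d,b_d)$ in the poset $[p]\times[q]$, and it lies outside $A$ exactly when its $\Delta^q$-projection is surjective onto $[q]$ (so it avoids $\Delta^p\times\partial\Delta^q$) and its $\Delta^p$-projection contains $[p]\setminus\{i\}$ (so it avoids $\Lambda^p_i\times\Delta^q$). Counting distinct coordinates, every such \emph{new} simplex has dimension at least $\max\{p-1,q\}$. Now I would realise $A\hookrightarrow B$ as a finite composite of pushouts of horns $\Lambda^d_e\subset\Delta^d$ that attach the new simplices: each horn-filling step adjoins a $d$-simplex together with its single missing $(d-1)$-face, and \emph{both} of these are necessarily new (they are absent from the partial union, hence from $A$). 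Since a face of a nondegenerate simplex of a product of simplices is again nondegenerate, the free face is a genuine new $(d-1)$-simplex, so $d-1\geq\max\{p-1,q\}$, i.e. $d\geq\max\{p,q+1\}\geq\max\{n,k+1\}$, which is the required bound (using $p\geq n$, $q\geq k$).

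The main obstacle is to produce this horn-filtration, equivalently an acyclic matching on the poset of new simplices that pairs each one with a neighbouring new simplex differing in a single vertex, the lower member of each pair playing the role of the free face. I would fix an explicit pairing rule---e.g. adjoining or deleting the first vertex, read along the chain, whose alteration keeps the simplex nondegenerate and new---order the resulting elementary expansions by dimension, and verify that at each stage precisely one top simplex and its free face are missing, so that the attachment is a pushout of a horn rather than of a full boundary. Checking that the matching is \emph{perfect} (no unmatched new simplex survives, which would force a cell attachment and destroy the bound) and \emph{acyclic} (so the expansions admit a linear order) is the delicate point; it is the general-$p,q$ analogue of the hand-made three-horn prism filling of Example~\ref{ex:cup}.

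As an organising device I would also exploit the reductions already available: Joyal's retract exhibits $\Lambda^p_i\subset\Delta^p$ as a retract of $(\Delta^{\{0\}}\subset\Delta^{\{0,1\}})\boxprod(\Lambda^p_i\subset\Delta^p)$, while Example~\ref{ex:cup} and Lemma~\ref{lem:boxAndClosure} supply $(\Delta^{\{0\}}\subset\Delta^{\{0,1\}})\boxprod(\partial\Delta^q\subset\Delta^q)\in\overline{\Horn_{\geq q+1}}\subseteq\overline{\Horn_{\geq k+1}}$; together these account cleanly for the $k+1$ half of the bound, the $n$ half coming from the dimension of $\Lambda^p_i$. I expect, however, that such reductions merely reshuffle the problem into a box product of two horns, which still rests on the matching above, so the acyclic matching remains the heart of the argument. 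Finally, Lemma~\ref{lem:fillingCells} applied to $\mathrm{sk}_{\,\max\{p-1,q\}-1}\,B\subseteq A\subseteq B$ yields the weaker cell-level statement $A\hookrightarrow B\in\overline{\Cell_{\geq\max\{p-1,q\}}}$, a useful consistency check on the dimension count.
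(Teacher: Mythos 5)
Your dimension count is correct and every auxiliary tool you cite is the right one, but the proof as proposed has a genuine gap at what you yourself call its heart: the perfect acyclic matching producing a horn filtration of $(\Lambda^p_i\subset\Delta^p)\boxprod(\partial\Delta^q\subset\Delta^q)$ is never constructed. You sketch a tentative pairing rule and then list exactly the properties (well-definedness, perfection, acyclicity) that would need to be checked, without checking any of them; for the product of a horn with a boundary this verification \emph{is} the hard combinatorial content, and nothing in your proposal reduces it to anything known. What you actually establish is conditional: \emph{if} such a filtration exists, then its horns have dimension at least $\max\{p,q+1\}$. Note also that the closure $\overline{(-)}$ is allowed to contain retracts, so the lemma does not even require an honest filtration of the inclusion by horn pushouts; by insisting on one you have made the problem strictly harder than what is being claimed.

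The irony is that you hold all the pieces of a complete proof and then talk yourself out of assembling them. Your dismissal of the retract route (``merely reshuffles the problem into a box product of two horns'') rests on the wrong bracketing. The paper groups the triple product as
\[
(\Delta^{0}\subset\Delta^{1})\boxprod\Bigl[(\Lambda^p_i\subset\Delta^p)\boxprod(\partial\Delta^q\subset\Delta^q)\Bigr],
\]
and the inner bracket is exactly the inclusion you analyse in your last sentence: its domain contains every simplex of $\Delta^p\times\Delta^q$ of dimension $\leq q-1$ or $\leq p-2$, so Lemma~\ref{lem:fillingCells} places it in $\overline{\Cell_{\geq\max\{q,p-1\}}}$. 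This is not a ``consistency check''---it is the crucial step, because a \emph{cell} filtration (unlike a horn filtration) comes for free from skeletal containment. Lemma~\ref{lem:boxAndClosure} then gives
\[
(\Delta^{0}\subset\Delta^{1})\boxprod\overline{\Cell_{\geq\max\{q,p-1\}}}
\;\subseteq\;
\overline{(\Delta^{0}\subset\Delta^{1})\boxprod\Cell_{\geq\max\{q,p-1\}}}
\;\subseteq\;
\overline{\Horn_{\geq\max\{q+1,p\}}},
\]
the last inclusion by Example~\ref{ex:cup}, and Joyal's retract transports this to $(\Lambda^p_i\subset\Delta^p)\boxprod(\partial\Delta^q\subset\Delta^q)$ itself, with $\max\{q+1,p\}\geq\max\{k+1,n\}$. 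No matching, and no box product of two horns, ever appears. To repair your write-up, either carry out this assembly (which is the paper's proof) or genuinely construct and verify the matching; as it stands, the central step is absent.
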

\begin{proof}
Assume $n\leq m$ and $0\leq i\leq m$. Depending on $i$, let $\Delta^0\subset\Delta^1$ be one of the two inclusions.  
By the retract trick:
\begin{equation}\label{eq:joyalTrick}
    (\Lambda^m_i\subset \Delta^m)\boxprod \mathrm{Cell}_{\geq k} 
    \subseteq \overline{(\Delta^0\subseteq \Delta^1)\boxprod(\Lambda^m_i\subset \Delta^m)\boxprod\mathrm{Cell}_{\geq k}}.
\end{equation}  
For $l\geq k$, let $A\subset B$ denote
\[
\Delta^m\times \partial\Delta^l\coprod_{\Lambda^m_i\times\partial\Delta^l}\Lambda^m_i\times\Delta^l\subset \Delta^m\times \Delta^l.
\]
Then $A$ contains all $j$-simplices of $B$ with $j\leq l-1$ or $j\leq m-2$.  
Thus~\eqref{eq:joyalTrick} is a subclass of
\[
\overline{(\Delta^0\subset\Delta^1)\boxprod \mathrm{Cell}_{\geq\max\{l,m-1\}}}\subseteq \overline{(\Delta^0\subset\Delta^1)\boxprod \mathrm{Cell}_{\geq\max\{k,n-1\}}}.
\]
Using Example~\ref{ex:cup}, the claim follows.
\end{proof}

\begin{lemma}\label{lem:essentialHelp}
Let $n\geq 3$, $I\subseteq \{0,\ldots,n\}$, and $\mathcal{K}$ the weakly saturated class generated by
\begin{equation}\label{eq:K}
\{\Lambda^k_i\subset \Delta^k\mid 3\leq k,\, 0\leq i\leq k\}\cup \{\partial_0\Delta^3\cup\partial_2\Delta^3\subset\Delta^3,\, \partial_1\Delta^3\cup\partial_3\Delta^3\subset\Delta^3\}.
\end{equation}
Then $\cup_{i\in I}\partial_i\Delta^n\subseteq \Delta^n$ belongs to $\mathcal{K}$ whenever
\begin{enumerate}
    \item[(i)] $3\leq |I|\leq n$, or
    \item[(ii)] $|I|=2$ and the two indices of $I$ are not consecutive modulo $n+1$.
\end{enumerate}
\end{lemma}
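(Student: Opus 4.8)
\section*{Proof proposal}

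The plan is to induct on $n$, and for a fixed $n$ to enlarge $\cup_{i\in I}\partial_i\Delta^n$ up to a full horn $\Lambda^n_j$ (a generator of $\mathcal{K}$ for $n\geq 3$) by attaching the missing codimension-one faces one at a time. The base case $n=3$ is immediate: condition (i) forces $|I|=3$, so $\cup_{i\in I}\partial_i\Delta^3$ is a horn $\Lambda^3_j$, while condition (ii) forces $I\in\{\{0,2\},\{1,3\}\}$, which gives exactly the two associativity generators $\partial_0\Delta^3\cup\partial_2\Delta^3\subseteq\Delta^3$ and $\partial_1\Delta^3\cup\partial_3\Delta^3\subseteq\Delta^3$ listed in~\eqref{eq:K}.

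The engine of the inductive step is the observation that, for any $j\notin I$, the inclusion $\cup_{i\in I}\partial_i\Delta^n\hookrightarrow\cup_{i\in I\cup\{j\}}\partial_i\Delta^n$ is a pushout of the lower-dimensional inclusion $\cup_{i\in I}\partial_i\Delta^{n-1}\hookrightarrow\Delta^{n-1}$ along the attaching map of the new face $\partial_j\Delta^n\cong\Delta^{n-1}$. Indeed, the relevant intersection $\partial_j\Delta^n\cap\bigl(\cup_{i\in I}\partial_i\Delta^n\bigr)$ equals $\cup_{i\in I}\Delta^{[n]\setminus\{i,j\}}$, which is precisely the union of the codimension-one faces of $\partial_j\Delta^n$ indexed by $I$. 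Since $\mathcal{K}$ is weakly saturated, this attaching step lies in $\mathcal{K}$ as soon as the $(n-1)$-dimensional inclusion $\cup_{i\in I}\partial_i\Delta^{n-1}\hookrightarrow\Delta^{n-1}$ does, which by the inductive hypothesis holds whenever the index set (of size $|I|$) satisfies condition (i) or (ii) at dimension $n-1$.

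For case (i), with $3\leq|I|\leq n$: if $|I|=n$ the object is already a horn and there is nothing to attach; otherwise $3\leq|I|\leq n-1$, and every intermediate union we pass through has between $3$ and $n-1$ faces, so each attaching step is governed by case (i) at dimension $n-1$. We attach faces until we reach a horn $\Lambda^n_j\hookrightarrow\Delta^n\in\mathcal{K}$; composing these pushouts with the final horn inclusion settles case (i).

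The delicate point is case (ii), $|I|=2$ with $n\geq 4$, where the first attaching step produces a two-face sub-problem at dimension $n-1$ and we must pick $j\notin I$ so that the two faces stay non-consecutive $\bmod\,n$ after deleting vertex $j$ and relabelling. Writing $I=\{a,b\}$ with $a<b$, non-consecutiveness $\bmod\,(n+1)$ is exactly $2\leq b-a\leq n-1$. If $b-a\leq n-2$ I delete a vertex $j<a$ or $j>b$ (at least one exists, since $a=0$ and $b=n$ would force $b-a=n$), keeping the gap equal to $b-a\in[2,n-2]$; if $b-a=n-1$ I instead delete a vertex $j$ strictly between $a$ and $b$, which exists because $b-a\geq 3$ and shrinks the gap to $n-2$, still in $[2,n-2]$ precisely because $n\geq 4$. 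In every case the resulting pair is non-consecutive $\bmod\,n$, so the first attaching step lies in $\mathcal{K}$ by case (ii) at dimension $n-1$; the enlarged set $I\cup\{j\}$ then has three faces and falls under case (i). The main obstacle is concentrated entirely in this choice of $j$: checking that a vertex can always be removed while preserving non-consecutiveness is exactly where the hypotheses $n\geq 4$ and "not consecutive $\bmod\,(n+1)$" are genuinely used.
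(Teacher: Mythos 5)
Your proof is correct and follows essentially the same strategy as the paper's: induction on $n$, attaching missing faces one at a time as pushouts of the lower-dimensional inclusions, with the base case $n=3$ given exactly by the horn and associativity generators, and case (ii) reduced to case (i) after a single well-chosen attachment. Your case analysis for which vertex $j$ to delete in case (ii) (exterior vertex when the gap is at most $n-2$, interior vertex when the gap is $n-1$) differs cosmetically from the paper's split (interior vertex for gap at least $3$, exterior for gap exactly $2$), but both choices serve the same purpose, and your write-up of the pushout/intersection argument is in fact more explicit than the paper's.
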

\begin{proof}
(i)  By induction on $n$. The base case $n=3$ is trivial. Let $n>3$ and $j\in \{0,\ldots,n\}\setminus I$. We can apply the induction hypothesis to $\partial_j\Delta^n$ and obtain that $$\bigcup_{i\in I}\partial_i\Delta^n\subset \bigcup_{i\in I\cup\{j\}}\partial_i\Delta^n$$
belongs to $\mathcal{K}$. Repeating this process, we reach $|I|=n$, in which case we obtain a horn of the form $\Lambda^n_j\subset\Delta^n$, $j\leq n$, which belongs to $\mathcal{K}$.

(ii) Again use induction on $n$. The case $n=3$ is trivial. Let $n>3$, and let $i<j$ be the indices of $I$.  
If $j-i\geq 2$, apply induction to $\partial_{i+1}\Delta^n$, reducing to case (i).  
If $j=i+2$, choose some $k$ not between $i$ and $j$, and apply induction to $\partial_k\Delta^n$. The rest is similar.
\end{proof}

\begin{lemma}\label{lem:genHorn}
Let $n,k\in\mathbb{N}$ and $I\subset\{0,\ldots,n\}$ with $k=|I|\leq n$. Then 
\[
\bigcup_{i\in I}\partial_i\Delta^n \subset \Delta^n \in \overline{\mathrm{Horn}}_{\geq k}.
\]
\end{lemma}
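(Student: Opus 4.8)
The plan is to argue by induction on the dimension $n$, peeling the faces $\partial_i\Delta^n$ off one at a time and recognising each attachment as a pushout of a strictly lower-dimensional instance of the very statement being proved. Throughout I take $I$ nonempty, i.e.\ $1\le k$, since that is the case that can be built from horns. The induction anchors on the situation $k=n$ (which in particular covers $n=1$): then $\bigcup_{i\in I}\partial_i\Delta^n$ omits exactly one face $\partial_m\Delta^n$ and therefore equals the horn $\Lambda^n_m$, so the inclusion $\Lambda^n_m\subset\Delta^n$ already lies in $\mathrm{Horn}_{\geq n}\subseteq\mathrm{Horn}_{\geq k}$ and nothing is to be done.

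Assume now $1\le k<n$ and that the statement holds in all dimensions $<n$. Enumerate the indices not in $I$ as $j_1,\ldots,j_{n-k},m$, set $I_t=I\cup\{j_1,\ldots,j_t\}$, and observe that $\bigcup_{i\in I_{n-k}}\partial_i\Delta^n=\Lambda^n_m$. This produces the factorisation
\[
\bigcup_{i\in I}\partial_i\Delta^n=\bigcup_{i\in I_0}\partial_i\Delta^n\subset\bigcup_{i\in I_1}\partial_i\Delta^n\subset\cdots\subset\Lambda^n_m\subset\Delta^n,
\]
whose final arrow is a generating horn in $\mathrm{Horn}_{\geq n}\subseteq\mathrm{Horn}_{\geq k}$. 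Since $\overline{\mathrm{Horn}}_{\geq k}$ is weakly saturated, and in particular closed under composition, it suffices to place each intermediate inclusion $\bigcup_{i\in I_{t-1}}\partial_i\Delta^n\subset\bigcup_{i\in I_t}\partial_i\Delta^n$ in $\overline{\mathrm{Horn}}_{\geq k}$.

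Each such step adjoins the single face $\partial_{j_t}\Delta^n$. As in any presheaf category $A\hookrightarrow A\cup B$ is a pushout of $A\cap B\hookrightarrow B$, the step is a pushout of
\[
\Big(\bigcup_{i\in I_{t-1}}\partial_i\Delta^n\Big)\cap\partial_{j_t}\Delta^n\hookrightarrow\partial_{j_t}\Delta^n.
\]
Here I would use that intersection distributes over union and that $\partial_i\Delta^n\cap\partial_{j_t}\Delta^n=\Delta^{\{0,\ldots,n\}\setminus\{i,j_t\}}$, the codimension-one face of $\partial_{j_t}\Delta^n$ opposite the vertex $i$. Under the canonical identification $\partial_{j_t}\Delta^n\cong\Delta^{n-1}$ the left-hand term thus becomes $\bigcup_{i\in J'}\partial_i\Delta^{n-1}$ for a set $J'$ with $|J'|=|I_{t-1}|$, which ranges over $\{k,\ldots,n-1\}$ as $t$ varies. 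Since $k\le|J'|\le n-1$, the induction hypothesis in dimension $n-1$ gives this inclusion in $\overline{\mathrm{Horn}}_{\geq|J'|}\subseteq\overline{\mathrm{Horn}}_{\geq k}$, and closure under pushouts transports it to the attachment itself.

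The only genuine obstacle is the bookkeeping of the previous paragraph: one must check that intersecting the partial boundary with the freshly added face again yields a union of codimension-one faces of the \emph{same} cardinality inside that face, and—crucially—that the peeling stops exactly at the horn $\Lambda^n_m$. Attempting to continue one further step to $\Delta^n$ by the same pushout would force the relevant intersection to be the full boundary $\partial\Delta^{n-1}$, i.e.\ a \emph{cell} rather than a horn, which is precisely why the terminal inclusion is instead handled directly as a generating horn.
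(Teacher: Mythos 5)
Your proof is correct and follows essentially the same route as the paper: the paper proves this lemma by citing the face-peeling induction of Lemma~\ref{lem:essentialHelp}(i), which is exactly your argument---adjoin the missing faces $\partial_{j_t}\Delta^n$ one at a time, recognise each attachment as a pushout of a lower-dimensional instance of the statement inside $\partial_{j_t}\Delta^n\cong\Delta^{n-1}$, and finish with the generating horn $\Lambda^n_m\subset\Delta^n$. Your explicit restriction to $k\geq 1$ is a sensible refinement the paper leaves implicit.
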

\begin{proof}
This is a straightforward generalization of the proof of Lemma~\ref{lem:essentialHelp}(i). 
\end{proof}
\begin{lemma}\label{lem:lotsOfWork}
Let $\mathcal{K}$ be as in Lemma~\ref{lem:essentialHelp} and $i,j\in\{0,1,2\}$.  
Then, except in the case $i=j=1$, the inclusion
\begin{equation}
    \Lambda^2_i\times\Delta^2\coprod_{\Lambda^2_i\times \Lambda^2_j}\Delta^2\times \Lambda^2_j\subseteq \Delta^2\times\Delta^2
\end{equation}
belongs to $\mathcal{K}$.
\end{lemma}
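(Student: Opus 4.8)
My plan is to realize the inclusion as a finite composite of pushouts of the generalized horns $\bigcup_{k\in I}\partial_k\Delta^4\subset\Delta^4$ shown to lie in $\mathcal{K}$ by Lemma~\ref{lem:essentialHelp}. The non-degenerate $4$-simplices of $\Delta^2\times\Delta^2$ are the six shuffles---the maximal chains from $(0,0)$ to $(2,2)$ in the poset $[2]\times[2]$---and every non-degenerate simplex is a face of at least one shuffle. Writing $A$ for the domain of the box product, I would build a filtration $A=Y_0\subset Y_1\subset\cdots\subset Y_6=\Delta^2\times\Delta^2$ in which $Y_r$ arises from $Y_{r-1}$ by adjoining the $r$-th shuffle $\sigma_r$, in an order still to be chosen, together with all of its faces not yet present. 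If $I_r\subseteq\{0,\dots,4\}$ records which codimension-one faces of $\sigma_r$ already lie in $Y_{r-1}$, then $Y_{r-1}\subset Y_r$ is a pushout of $\bigcup_{k\in I_r}\partial_k\Delta^4\subset\Delta^4$ along the attaching map of $\sigma_r$, and by Lemma~\ref{lem:essentialHelp} it belongs to $\mathcal{K}$ as soon as $|I_r|\geq 3$, or $|I_r|=2$ with the two indices non-consecutive modulo $5$. The problem thus reduces to ordering the six shuffles so that every $I_r$ is admissible in this sense.

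Before the bookkeeping I would cut down the pairs $(i,j)$ using the symmetries of $\Delta^2\times\Delta^2$ that preserve $\mathcal{K}$: the transposition automorphism $(x,y)\mapsto(y,x)$, which sends $(i,j)$ to $(j,i)$, and the order-reversing isomorphism $\Delta^2\times\Delta^2\cong(\Delta^2\times\Delta^2)^{\mathrm{op}}$, which sends $(i,j)$ to $(2-i,2-j)$, carries horns to horns, and interchanges the associativity generators $\partial_0\Delta^3\cup\partial_2\Delta^3$ and $\partial_1\Delta^3\cup\partial_3\Delta^3$. Up to these symmetries the pairs $(i,j)\neq(1,1)$ fall into three orbits, represented by $(0,0)$, $(0,2)$, and $(0,1)$. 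A useful observation is that, since the faces created at a step are exactly those $S$ with $I_r\subseteq S$, a missing edge $\{a,b\}$ can be produced only by the step with $I_r=\{a,b\}$, whose admissibility then forces $\{a,b\}$ to be non-consecutive; faces of dimension at least two impose no such rigidity. Thus the missing edges are the sole source of constraint, and the remaining verification---tracking, for each shuffle, which facets are present and confirming that the newly created lower faces are genuinely absent beforehand, so that each step is the claimed pushout---is the main, purely mechanical, obstacle.

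The conceptual heart, and the reason $i=j=1$ must be excluded, is the behaviour of the long diagonal edge $(0,0)\to(2,2)$. A direct inspection of $A$ shows that an edge lies outside $A$ exactly when its two coordinate projections have images $\{0,1,2\}\setminus\{i\}$ and $\{0,1,2\}\setminus\{j\}$; for the diagonal edge, whose projections both have image $\{0,2\}$, this happens precisely when $i=j=1$. Now $(0,0)\to(2,2)$ is the edge spanned by the first and last vertices of every shuffle, so it always occupies the slot $\{0,4\}$, which is consecutive modulo $5$; since an edge can be created only by a step whose core is that very slot, no admissible step can ever introduce it (and no three-simplex helps either, as the edge would then have to sit in a non-consecutive slot $\{0,2\}$ or $\{1,3\}$, impossible for the global minimum and maximum of a chain). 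Thus for $i=j=1$ the filtration is genuinely blocked. Conversely, in the three admissible orbits the unique missing edge is $(1,1)\to(2,2)$, $(1,0)\to(2,1)$, and $(1,0)\to(2,2)$ respectively, and in each case it occupies a non-consecutive slot of some shuffle; choosing an order that attaches that shuffle while only those two facets are present, and filling the remaining shuffles through horns and generalized horns of size $\geq 3$, completes the filtration. Weak saturation of $\mathcal{K}$ then yields the claim.
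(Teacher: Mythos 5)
Your strategy coincides with the paper's: reduce to three orbits by the transposition and order-reversal symmetries, then attach the six shuffles of $\Delta^2\times\Delta^2$ one at a time, each attachment being a pushout of a generalized horn supplied by Lemma~\ref{lem:essentialHelp}. Your structural analysis is correct, and in one respect sharper than what the paper records: the description of which edges are missing from the domain $A$, the observation that a missing edge can only be created by a step whose index set is exactly its (hence necessarily non-consecutive) slot, and the resulting explanation of why $i=j=1$ must be excluded are all accurate (the paper only remarks that its markings fail in that case). I also checked that in each of the three orbits the unique missing edge does occupy a non-consecutive slot of some shuffle, as you claim.

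The gap is that your final sentence asserts exactly what has to be proved: that an ordering of the six shuffles exists in which every step is a pushout of an \emph{admissible} generalized horn. Exhibiting such orderings is the actual content of the paper's proof (the marked adjacency graphs and the step-by-step filling), and it is less mechanical than you suggest, because each step requires the intersection of the next shuffle with the current subcomplex to equal a \emph{union of its facets}, while $A$ contains stray $2$-simplices lying in no facet that belongs to $A$. Concretely, in the case $(i,j)=(0,2)$, the shuffle $\sigma_1$ through $(0,0),(1,0),(2,0),(2,1),(2,2)$ meets $A$ in exactly its facets at the non-consecutive slot $\{1,3\}$ and can be attached first, as your plan dictates; but after that, the shuffle $\sigma$ through $(0,0),(1,0),(1,1),(1,2),(2,2)$ cannot come next, since its intersection with the current complex is $\partial_2\sigma\cup\partial_4\sigma\cup\Delta^{\{2,3,4\}}$ (indices denoting vertex positions along the chain), and the $2$-simplex $\Delta^{\{2,3,4\}}$ lies in neither facet; a similar stray face blocks every other shuffle except the one through $(0,0),(0,1),(0,2),(1,2),(2,2)$, which is the unique admissible second step. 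A valid order does exist in each case (for $(0,2)$ one can take, in path notation, RRUU, UURR, URUR, RUUR, URRU, RURU), so your claims are true and the gap is fillable; but as written, the decisive verification that the paper carries out is missing from your argument.
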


\begin{proof}
Since the generating set of $\mathcal{K}$ is symmetric under arrow reversal, and the pushout product has an obvious symmetry, it suffices to treat the three cases $i=0,1,2$ with $j=2$.
\smallskip
\emph{Case $i=1$.}  
Let $m\colon A\hookrightarrow B$ denote the inclusion in question.  
Here $B=\Delta^2\times\Delta^2$ is the union of six $4$-simplices.  
We label its vertices as in diagram~\eqref{diag:prismRelabel} (only some edges are shown):
\begin{equation}\label{diag:prismRelabel}
    \begin{tikzcd}[row sep=12pt, column sep=15pt]
2\arrow[rrr]&&& 2'\arrow[rrr]&&& 2''&\\
& 1\arrow[lu]\arrow[rrr]&&& 1'\arrow[lu]\arrow[rrr]&&& 1''\arrow[lu]\\
0\arrow[uu,bend left=5]\arrow[ru]\arrow[rrr]&&& 0'\arrow[uu,bend left=5]\arrow[ru]\arrow[rrr]&&& 0''\arrow[ru]\arrow[uu,bend left=5]&\\
\end{tikzcd}
\end{equation}
Here $\Delta^{\{0,1\}}\times\Delta^2\subset A$ and $\Delta^{\{1,2\}}\times\Delta^2\subset A$ are the left and right prism, respectively.  

We now form the adjacency graph~\eqref{diag:adjanGr}, whose vertices are the six non-degenerate $4$-simplices of $B$, with edges indicating a shared $3$-face.  
A vertex is marked whenever the opposite $3$-face lies in $A$: 
\begin{equation}\label{diag:adjanGr}
    \begin{tikzcd}[row sep=15pt, column sep=7pt]
\Delta^{\{\markindex{0},\markindex{1},2,2',\markindex{2''}\}}\arrow[r,no head]&\Delta^{\{\markindex{0},1,1',2',\markindex{2''}\}}\arrow[d,no head]\arrow[r,no head]&\Delta^{\{\markindex{0},1,1',1'',2''\}}\arrow[d,no head]&\\
& \Delta^{\{\markindex{0},0',\markindex{1'},2',\markindex{2''}\}}\arrow[r,no head]&\Delta^{\{\markindex{0},0',1',1'',2''\}}\arrow[r,no head]&\Delta^{\{\markindex{0},0',0'',\markindex{1''},2''\}}
\end{tikzcd}
\end{equation}

Label these simplices $D_1,\dots,D_6$ in the order indicated by~\eqref{diag:adjanGr}. To improve orientation in the diagram, let us describe in detail which faces of a given $D_l$ belong to $A$. Some $\Delta^I\subset D_l$ belongs to $A$ if:
\begin{itemize}
    \item $I$ omits $0,0',0''$, or
    \item $I$ omits $1,1',1''$, or
    \item $I$ omits $0,1,2$, or
    \item $I$ omits $0'',1'',2''$.
\end{itemize}
Note also that for each of the four conditions there is a unique greatest face $\Delta^I\subset D_l$, satisfying the condition.

The domain $A$ is the union
\[
A=\Delta^2\times \Delta^{\{0,2\}} \;\cup\; \Delta^2\times \Delta^{\{1,2\}} \;\cup\; \Delta^{\{0,1\}}\times\Delta^2 \;\cup\; \Delta^{\{1,2\}}\times \Delta^2,
\]
consisting of $12$ non-degenerate $3$-simplices, exactly matching the $12$ marked indices.  
Thus the graph~\eqref{diag:adjanGr} encodes $A$ completely.

We now fill the $D_l$'s sequentially using Lemma~\ref{lem:essentialHelp}.  
Simplex $D_1$ intersects $A$ in three faces, hence we can fill it. 
The adjanced simplex $D_2$ then has three filled faces, so we can fill it as well. This provides the face of $D_3$ opposite the vertex $1''$, 
which allows us to fill $D_3$. The simplex $D_4$ then has four filled faces (one arising from $D_2$). 
After filling $D_4$, the simplex $D_5$ admits three filled faces, and can thus be filled. 
Finally, the last simplex has three filled faces at this stage, so we conclude with the final application of Lemma~\ref{lem:essentialHelp}.

Note that at each step, the intersection of $D_l$ with $A$ lies inside the union of its already filled $3$-faces, so the use of Lemma~\ref{lem:essentialHelp} is valid.

\smallskip
\emph{Cases $i=0$ and $i=2$.}  
The argument is entirely analogous (in
fact, there are many possible ways to proceed, so we feel confident leaving the
details to the reader).  
The marked vertices in the adjacency graph are modified as in diagrams~\eqref{diag:prismCase4Graph0} and~\eqref{diag:prismCase4Graph2}, respectively:

\begin{equation}\label{diag:prismCase4Graph0}
    \begin{tikzcd}[row sep=15pt, column sep=7pt]
\Delta^{\{\markindex{0},\markindex{1},2,\markindex{2'},\markindex{2''}\}}\arrow[r,no head]&\Delta^{\{\markindex{0},1,1',2',\markindex{2''}\}}\arrow[d,no head]\arrow[r,no head]&\Delta^{\{\markindex{0},1,\markindex{1'},1'',2''\}}\arrow[d,no head]&\\
& \Delta^{\{0,0',\markindex{1'},2',\markindex{2''}\}}\arrow[r,no head]&\Delta^{\{0,0',1',1'',2''\}}\arrow[r,no head]&\Delta^{\{0,\markindex{0'},0'',\markindex{1''},2''\}}
\end{tikzcd}
\end{equation}

\begin{equation}\label{diag:prismCase4Graph2}
    \begin{tikzcd}[row sep=15pt, column sep=7pt]
\Delta^{\{\markindex{0},\markindex{1},2,\markindex{2'},2''\}}\arrow[r,no head]&\Delta^{\{\markindex{0},1,1',2',2''\}}\arrow[d,no head]\arrow[r,no head]&\Delta^{\{\markindex{0},1,\markindex{1'},1'',2''\}}\arrow[d,no head]&\\
& \Delta^{\{\markindex{0},0',\markindex{1'},2',2''\}}\arrow[r,no head]&\Delta^{\{\markindex{0},0',1',1'',2''\}}\arrow[r,no head]&\Delta^{\{\markindex{0},\markindex{0'},0'',\markindex{1''},2''\}}
\end{tikzcd}
\end{equation}

This completes the proof.
\end{proof}


We remark that in the case $i=j=1$, each simplex in the corresponding adjancency graph has both its first and last vertex marked, so Lemma~\ref{lem:essentialHelp} does not apply.

For later use, we require a variant of the previous lemma in which $\Lambda^2_j$ is replaced by $\Delta^{\{0,2\}}\cup\Delta^{\{1\}}$.

\begin{lemma}\label{lem:lotsOfWork2}
Let $\mathcal{K}$ be as in Lemma~\ref{lem:essentialHelp}.  
For $i=0,1,2$, the morphism
\begin{equation}
\Lambda^2_i\times\Delta^2\;\coprod_{\Lambda^2_i\times (\Delta^{\{0,2\}}\cup\Delta^{\{1\}})}\;\Delta^2\times (\Delta^{\{0,2\}}\cup\Delta^{\{1\}})
\;\subseteq\; \Delta^2\times\Delta^2
\end{equation}
belongs to $\mathcal{K}$.
\end{lemma}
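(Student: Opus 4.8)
The plan is to adapt the combinatorial strategy of Lemma~\ref{lem:lotsOfWork} almost verbatim. The key structural fact is that $\Delta^2\times\Delta^2$ decomposes into six non-degenerate $4$-simplices $D_1,\dots,D_6$, arranged along the adjacency graph~\eqref{diag:adjanGr}, and this decomposition does not depend on the second factor $\Lambda^2_j$ being a horn. What changes here is only the domain $A$: instead of $\Lambda^2_i\times\Delta^2\cup\Delta^2\times\Lambda^2_j$, we now have
\[
A=\Lambda^2_i\times\Delta^2\;\cup\;\Delta^2\times(\Delta^{\{0,2\}}\cup\Delta^{\{1\}}).
\]
So the first step is to recompute, for each of the six $D_l$, which of its codimension-one faces already lie in the new $A$. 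The condition ``$I$ omits $0'',1'',2''$'' (one of the four bullet conditions in Lemma~\ref{lem:lotsOfWork}) corresponds to $\Delta^2\times\Delta^{\{0,1\}}$ or similar full-prism faces and must be replaced by the conditions coming from $\Delta^{\{0,2\}}\cup\Delta^{\{1\}}$; concretely, a face $\Delta^I\subseteq D_l$ lies in $A$ if $I$ projects (under the second coordinate) into $\{0,2\}$ or into $\{1\}$, or if $I$ omits the appropriate vertex class dictated by $\Lambda^2_i$ in the first coordinate.

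Second, I would redraw the three adjacency graphs (for $i=0,1,2$), re-marking each vertex $D_l$ precisely when its opposite $3$-face belongs to the new $A$. The crucial quantitative check is that each $D_l$ still meets $A$ in at least three of its five facets, and that these facets can be arranged into a fillable configuration in the sense of Lemma~\ref{lem:essentialHelp}---that is, either at least three facets are present, or exactly two non-consecutive ones are, so that clause~(i) or~(ii) of that lemma applies. Then the same sequential filling argument runs: fill $D_1$ first, which feeds a new filled facet into its neighbor $D_2$, and so on along the graph, always checking that the part of $D_l$ lying in $A$ sits inside the union of its already-filled $3$-faces so that the application of Lemma~\ref{lem:essentialHelp} is legitimate.

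As before, I would reduce the work using the symmetry of the generating set of $\mathcal{K}$ under arrow reversal together with the symmetry of the pushout product, so that it suffices to handle the three cases $i=0,1,2$; there is no exceptional $i=j=1$ case to exclude here, since $\Delta^{\{0,2\}}\cup\Delta^{\{1\}}$ plays the role of the second factor uniformly. The main obstacle I anticipate is purely bookkeeping: verifying that the marking pattern on the six vertices $D_l$ still admits a valid filling order for each of the three values of $i$. Unlike the horn case, the set $\Delta^{\{0,2\}}\cup\Delta^{\{1\}}$ is not itself a horn (it is the union of an edge and the opposite vertex, i.e.\ two non-consecutive faces $\partial_1\Delta^2$ and $\partial_0\Delta^2\cap\partial_2\Delta^2$), so the marked facets may be distributed differently than in Lemma~\ref{lem:lotsOfWork}, and one must confirm that clause~(ii) of Lemma~\ref{lem:essentialHelp}---non-consecutivity modulo $n+1$ when only two facets are available---is genuinely satisfied at each step rather than merely hoped for. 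Once the marking diagrams are correctly drawn and the fillability at each node is confirmed, the conclusion follows exactly as in the previous lemma, and the remaining cases may safely be left to the reader.
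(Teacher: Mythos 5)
There is a genuine gap, and it sits precisely where your own validity check fails. Keep the labelling of Lemma~\ref{lem:lotsOfWork} and take $i=1$. The new piece of the domain, $\Delta^2\times\Delta^{\{1\}}$, is the single $2$-simplex $\Delta^{\{1,1',1''\}}$, and it is not contained in any $3$-simplex of $A$: every $3$-simplex of $A$ omits at least one of the vertices $1,1',1''$. The only top-dimensional simplex containing it is $D_3=\Delta^{\{0,1,1',1'',2''\}}$, and one checks directly that \emph{none} of the five facets of $D_3$ lies in $A$ --- so, contrary to your ``crucial quantitative check'', $D_3$ meets $A$ in zero facets. Even after all five other $D_l$ have been filled, only the two facets $\partial_{1''}D_3$ (shared with $D_2$) and $\partial_{1}D_3$ (shared with $D_5$) are filled, and at that stage the intersection of $D_3$ with the filled part equals $\partial_1 D_3\cup\partial_{1''}D_3\cup\Delta^{\{1,1',1''\}}$, which is \emph{not} a union of facets of $D_3$. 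Hence Lemma~\ref{lem:essentialHelp} does not apply (its domain is a union of codimension-one faces), and attaching $D_3$ by a pushout along just the two facets --- tempting, since their indices are non-consecutive, so clause~(ii) would cover that inclusion --- does not compute the union inside $\Delta^2\times\Delta^2$: the stray $2$-simplex would be duplicated rather than identified. Your proposed ``same sequential filling argument'' therefore halts at $D_3$ with no instruction for how to proceed; this is a missing idea, not bookkeeping.

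The paper's proof supplies exactly that idea. After filling $D_2$ and $D_5$, restrict to the facet $\partial_{2''}D_3=\Delta^{\{0,1,1',1''\}}$. Inside this copy of $\Delta^3$, three of the four $2$-faces are available: $\Delta^{\{0,1,1'\}}$ (from $\partial_{1''}D_3$), $\Delta^{\{0,1',1''\}}$ (from $\partial_{1}D_3$), and the stray simplex $\Delta^{\{1,1',1''\}}$ itself. These form a $3$-horn, whose filling is a generator of $\mathcal{K}$; filling it produces the facet $\partial_{2''}D_3$, after which $D_3$ has three filled facets and Lemma~\ref{lem:essentialHelp}(i) finally applies. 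Some such nested, lower-dimensional horn-filling step is unavoidable in any proof along these lines: any generator of $\mathcal{K}$ attaching a $3$-simplex that contains $\Delta^{\{1,1',1''\}}$ must have that $2$-face in its attaching region together with further $2$-faces supplied by previously filled neighbours. The same device is needed again in the case $i=0$. Apart from this omission, your setup --- recomputing the markings, exploiting the reversal symmetry, and filling the $D_l$ sequentially --- coincides with the paper's.
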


\begin{proof}
We use the notation from Lemma~\ref{lem:lotsOfWork}.  
By symmetry, it suffices to treat $i=1$ and $i=0$.

\smallskip
\emph{Case $i=1$.}  
Here $A$ is the union of nine $3$-simplices and one $2$-simplex $\Delta^2\times\Delta^{\{1\}}$, corresponding in diagram~\eqref{diag:prismRelabel} to $\Delta^{\{1,1',1''\}}$.  
The adjacency graph is given in~\eqref{diag:adjGr4}.  
A simplex $\Delta^I\subset D_l$ belongs to $A$ iff:  
(1) $I$ omits $1,1',1''$; (2) $I$ omits $0,1,2$; or (3) $I$ omits $0'',1'',2''$.
    \begin{equation}\label{diag:adjGr4}
    \begin{tikzcd}[row sep=15pt, column sep=7pt]
\Delta^{\{0,\markindex{1},2,2',\markindex{2''}\}}\arrow[r,no head]&\Delta^{\{0,1,1',2',\markindex{2''}\}}\arrow[d,no head]\arrow[r,no head]&\Delta^{\{0,1,1',1'',2''\}}\arrow[d,no head]&\\
& \Delta^{\{\markindex{0},0',\markindex{1'},2',\markindex{2''}\}}\arrow[r,no head]&\Delta^{\{\markindex{0},0',1',1'',2''\}}\arrow[r,no head]&\Delta^{\{\markindex{0},0',0'',\markindex{1''},2''\}}
\end{tikzcd}
\end{equation}

We fill $D_1, D_4$, and $D_6$ first, since each intersects $A$ in a union of $3$-faces, Lemma~\ref{lem:essentialHelp} applies.  
Next, $D_2$ has filled faces opposite $1,1',2''$, and $D_5$ has filled faces opposite $0,1',1''$, so both can be filled.  

It remains to handle $D_3$.  
The adjacent simplices provide the faces opposite $1$ and $1''$, while $\Delta^{\{1,1',1''\}}\subset A\cap D_3$.  
Restricting to $\partial_{2''}D_3\cong\Delta^3$, these three faces form a horn, which can be filled.  
This gives the face $\partial_{2''} D_3$, so $D_3$ can then be filled by Lemma~\ref{lem:essentialHelp}.

\smallskip
\emph{Case $i=0$.}  
Here $\Delta^I\subseteq D_l$ lies in $A$ iff:  
(1) $I$ omits $1,1',1''$; (2) $I$ omits $0',1',2'$; or (3) $I$ omits $0'',1'',2''$.  
The modified adjacency graph is shown in~\eqref{diag:adjGr5}.
\begin{equation}\label{diag:adjGr5}
    \begin{tikzcd}[row sep=15pt, column sep=7pt]
\Delta^{\{0,\markindex{1},2,\markindex{2'},\markindex{2''}\}}\arrow[r,no head]&\Delta^{\{0,1,1',2',\markindex{2''}\}}\arrow[d,no head]\arrow[r,no head]&\Delta^{\{0,1,\markindex{1'},1'',2''\}}\arrow[d,no head]&\\
& \Delta^{\{0,0',\markindex{1'},2',\markindex{2''}\}}\arrow[r,no head]&\Delta^{\{0,0',1',1'',2''\}}\arrow[r,no head]&\Delta^{\{0,\markindex{0'},0'',\markindex{1''},2''\}}
\end{tikzcd}
\end{equation}  

We first fill $D_1$, $D_4$, and $D_6$.  
Then we can fill $D_2$. Consider $D_3$, it has already filled faces $\partial_{1'}D_3$, $\partial_{1''}D_3$, and $\Delta^{\{1,1'',1'''\}}$. Restricting to $\partial_{2''}D_3$, we obtain a $3$-horn, which we can fill. So we obtain the face $\partial_{2''}D_3$ and those we can apply Lemma~\ref{lem:essentialHelp}.
Finally, after obtaining three filled faces of $D_5$, we fill it by Lemma~\ref{lem:essentialHelp}.  
\end{proof}

We will only need the next lemma for $m,n\leq 2$, which can be proved directly in the style of Lemma~\ref{lem:lotsOfWork}.  
Nevertheless, we state it in general to highlight a useful correspondence between top-dimensional simplices of $\Delta^m\times\Delta^n$ and lattice paths in the $m\times n$ grid.  
We denote the domain of $(\partial\Delta^n\subseteq\Delta^n)\Box(\partial\Delta^m\subset\Delta^m)$ by $\partial(\Delta^m\times\Delta^n)$.

\begin{lemma}\label{lem:hope}
    Let $n,m\in\mathbb N$. Then $(\partial\Delta^n\subseteq\Delta^n)\Box(\partial\Delta^m\subset\Delta^m)$ belongs to the weakly saturated class generated by
    horns $\Lambda^k_i$, with $k\geq \max\{m,n\}+1$ and the cell $\partial\Delta^l\subset\Delta^{m+n}$.

Consequently, $$\overline{\Cell}_{\geq m}\boxprod \overline{\Cell}_{\geq n}\subseteq \overline{\Horn_{\geq \max\{m,n\}+1}\cup\Cell_{\geq m+n}}.$$
\end{lemma}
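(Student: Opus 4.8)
The plan is to realize the inclusion $\partial(\Delta^m\times\Delta^n)\hookrightarrow \Delta^m\times\Delta^n$ as a finite composite of pushouts of the allowed generators, by attaching the ``interior'' simplices one at a time in a carefully chosen order. First I would set up the combinatorial dictionary promised in the preceding remark: a nondegenerate simplex of $\Delta^m\times\Delta^n$ is a chain in $[m]\times[n]$, and it fails to lie in $\partial(\Delta^m\times\Delta^n)$ exactly when its two coordinate projections are surjective onto $\{0,\dots,m\}$ and $\{0,\dots,n\}$ respectively. Such \emph{interior} simplices correspond precisely to lattice paths from $(0,0)$ to $(m,n)$ with unit steps $R=(1,0)$, $U=(0,1)$ and $D=(1,1)$; the dimension of the simplex equals the number of steps, namely $m+n-\#D$, so the top-dimensional simplices (the shuffles) are the paths with no diagonal, and the lowest interior dimension $\max\{m,n\}$ is attained by the paths with the maximal number $\min\{m,n\}$ of diagonals. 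Everything of dimension $<\max\{m,n\}$ already lies in $\partial(\Delta^m\times\Delta^n)$. When $\min\{m,n\}=0$ the box product is literally the cell $\partial\Delta^{m+n}\subseteq\Delta^{m+n}$, so I assume $m,n\ge 1$.

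Second, I would produce the attaching order from a discrete-Morse-style acyclic matching on the interior simplices. The rule: scan a path from the left and locate the first step that is either a diagonal $D$ or the left half of a corner $RU$ (an $R$ immediately followed by a $U$); then match the path carrying the corner $RU$ at this spot with the path obtained by fusing it into a single $D$, i.e.\ $RU\leftrightarrow D$. One checks that the segment preceding the distinguished spot is forced to have the form $U\cdots U\,R\cdots R$ (the only $\{R,U\}$-strings avoiding the factor $RU$ and the letter $D$), so the two partners share the same distinguished spot and the rule is a well-defined involution. Fusing $RU$ into $D$ deletes one vertex, so each matched pair consists of a simplex $P$ together with a codimension-one face $P'=d_jP$. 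Crucially, any path containing a genuine (non-diagonal) $R$ and $U$ has $\#D\le \min\{m,n\}-1$, whence $\dim P=m+n-\#D\ge \max\{m,n\}+1$; and the only path with neither a $D$ nor an $RU$-corner is the single staircase $U^nR^m$, which is the unique critical (unmatched) cell, of dimension $m+n$.

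Third, I would convert this into the filtration. Acyclicity of the matching provides a linear order on the matched pairs and the critical cell in which every proper face of a simplex is attached no later than that simplex, its matched partner being attached simultaneously. Attaching a matched pair $(P',P)$ is then exactly a pushout of the horn inclusion $\Lambda^{\dim P}_{j}\subset\Delta^{\dim P}$ whose missing face is $P'=d_jP$, with $\dim P\ge\max\{m,n\}+1$ by the dimension bound; attaching the critical cell is a pushout of $\partial\Delta^{m+n}\subset\Delta^{m+n}$. Composing these pushouts exhibits $(\partial\Delta^n\subseteq\Delta^n)\boxprod(\partial\Delta^m\subseteq\Delta^m)$ as a member of the weakly saturated class generated by $\{\Lambda^k_i\mid k\ge\max\{m,n\}+1\}\cup\{\partial\Delta^{m+n}\subseteq\Delta^{m+n}\}$, which is the first assertion. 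For the ``Consequently'' clause I would invoke Lemma~\ref{lem:boxAndClosure} with $\mathcal K=\Cell_{\ge m}$ and $\mathcal L=\Cell_{\ge n}$, giving $\overline{\Cell}_{\ge m}\boxprod\overline{\Cell}_{\ge n}\subseteq\overline{\Cell_{\ge m}\boxprod\Cell_{\ge n}}$, and then apply the first assertion to each generator $(\partial\Delta^p\subseteq\Delta^p)\boxprod(\partial\Delta^q\subseteq\Delta^q)$ with $p\ge m$, $q\ge n$, noting $\max\{p,q\}+1\ge\max\{m,n\}+1$ and $p+q\ge m+n$.

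The main obstacle I anticipate is not any single computation but the bookkeeping behind the filtration: verifying that the ``first $D$ or $RU$-corner'' rule is genuinely acyclic (no cyclic gradient paths), and that at the moment a pair $(P',P)$ is attached all faces of $P$ other than $P'$, together with all faces of $P'$, are already present --- equivalently, that $P'$ is a free face, with no coface other than $P$ still missing. This is exactly what acyclicity of the matching guarantees via the standard elementary-collapse principle for regular CW complexes. For $m,n\le 2$ this bookkeeping can instead be discharged by the explicit hands-on filling used in Lemma~\ref{lem:lotsOfWork}, which is all the paper actually requires.
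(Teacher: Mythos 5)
Your proposal is correct in substance, but it takes a genuinely different route from the paper. The paper never treats the lower-dimensional interior simplices individually: it orders only the top-dimensional shuffles $\sigma_1,\dots,\sigma_N$ of $\Delta^m\times\Delta^n$ by the statistic $u(\sigma)$ (the number of grid squares above the corresponding monotone lattice path) and computes that each $\sigma_p$ meets $\partial(\Delta^m\times\Delta^n)\cup\sigma_1\cup\dots\cup\sigma_{p-1}$ in exactly the union of facets $\partial_x\sigma_p$ with $x\in I(\sigma_p)\cup L(\sigma_p)$; since $|U(\sigma_p)|\le\min\{m,n\}$, that union has at least $\max\{m,n\}+1$ members, so each attachment is a generalized horn lying in $\overline{\Horn}_{\ge\max\{m,n\}+1}$ by Lemma~\ref{lem:genHorn}, and the final shuffle (the one with $U(\sigma_N)=\emptyset$) is attached along $\partial\Delta^{m+n}$. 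Your discrete-Morse matching instead attaches \emph{every} interior simplex, in matched pairs $(d_jP,P)$, via honest single-horn pushouts $\Lambda^{\dim P}_j\subset\Delta^{\dim P}$ in dimensions ranging over $\max\{m,n\}+1,\dots,m+n$, plus one critical cell $U^nR^m$ attached along $\partial\Delta^{m+n}$. Both yield exactly the stated generating set; the paper's argument buys a shorter verification (one intersection computation plus Lemma~\ref{lem:genHorn}, no Morse theory), while yours is finer-grained, produces genuine horns rather than generalized ones, and makes the unique critical cell conceptually visible. Your dictionary (interior simplices are the $\{R,U,D\}$-paths), the dimension bound for matched pairs, the uniqueness of the critical cell, the degenerate case $\min\{m,n\}=0$, and the reduction of the ``consequently'' clause to Lemma~\ref{lem:boxAndClosure} are all correct.

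The one obligation you left open --- acyclicity of the matching --- is real, but it holds, and it can be closed in a few lines rather than by appeal to bookkeeping. Write an upper cell as $P_i=U^aR^b\,RU\,v$, with the spot at position $a+b+1$. In a gradient path $P_i\searrow Q_{i+1}\nearrow P_{i+1}$, the facet $Q_{i+1}$ must be a \emph{lower} cell (its first spot a $D$) distinct from the partner of $P_i$. Inspecting the possible corner merges shows this forces $Q_{i+1}$ to be the merge of a $UR$ corner either at the junction inside the prefix $U^aR^b$ or immediately after the spot's $U$; merging any corner strictly beyond that leaves the first spot an $RU$ corner, so the resulting facet is an upper cell and the path stops. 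In both surviving cases, splitting the new first $D$ of $Q_{i+1}$ shows that $P_{i+1}$ is obtained from $P_i$ by transposing that adjacent $UR$ into $RU$. Hence each step of a gradient path strictly decreases the step-word in the lexicographic order with $R<U$ (all $P_i$ in a gradient path have the same dimension, so the comparison is well defined), and no closed gradient path can exist. With acyclicity established, the standard ordering of matched pairs realizes each attachment as the asserted horn pushout, and your proof is complete.
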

\begin{proof}
    Without loss of generality assume $n\leq m$. Let us organize the vertices of $\Delta^m\times\Delta^n$ into an $m\times n$ grid, 
    labeling the vertices with elements of $\{0,\ldots,n\}\times\{0,\ldots,m\}$.
    There is a one-to-one correspondence between top-dimensional simplices of $\Delta^m\times\Delta^n$ and the shortest lattice paths in the grid from $(0,0)$ to $(m,n)$. 
    We introduce some notation. For a top-dimension simplex $\sigma$ of $\Delta^m\times\Delta^n$, denote by $u(\sigma)$ the number of squares in the grid located above the lattice path corresponding to $\sigma$.
    Moreover, we decompose the set of $m+n+1$ vertices of $\sigma$ into three disjoint sets $I(\sigma)$, $U(\sigma)$, and $L(\sigma)$ in the following manner:
    \begin{itemize}
     \item The vertex $(i,j)$ of the path belongs to $I(\sigma)$ if and only if it is the first, the last, or a straight vertex of the path; that is, there is a vertical or horizontal line of the grid that meets the path exactly at the vertex $(i,j)$.
    \item The vertex $(i,j)$ of the path belongs to $U(\sigma)$ if and only if there is a north–east turn at $(i,j)$.
    \item The vertex $(i,j)$ of the path belongs to $L(\sigma)$ if and only if there is an east–north turn at $(i,j)$.
   \end{itemize}
Consider the example where $m=4$, $n=2$, and $\sigma$ is defined by the thick path in diagram~\eqref{diag:grid}. Here, $u(\sigma)=5$, and the sets $I(\sigma)$, $U(\sigma)$, and $L(\sigma)$ are indicated in the diagram.
\begin{equation}\label{diag:grid}
    \begin{tikzcd}[row sep=10pt, column sep=20pt]
        (0,2) \arrow[r] & (1,2) \arrow[r]& (2,2)\arrow[r]& U\arrow[r,Tarrow]&I\\
        (0,1) \arrow[r]\arrow[u] & (1,1) \arrow[r]\arrow[u]&U\arrow[r,Tarrow]\arrow[u]& L\arrow[r]\arrow[u,Tarrow]&(4,1)\arrow[u]\\
        I \arrow[r,Tarrow]\arrow[u] &I\arrow[u]\arrow[r,Tarrow]& L\arrow[r]\arrow[u,Tarrow]&(0,3)\arrow[u]\arrow[r]\arrow[u]&(0,4)\arrow[u]
    \end{tikzcd}
\end{equation}

   Let us order all the top-dimensional simplices of $\Delta^m \times \Delta^n$ as 
$\sigma_1, \sigma_2, \ldots, \sigma_N$ in such a way that for every pair 
$\sigma_i, \sigma_j$ with $i \leq j$, we have $u(\sigma_i) \leq u(\sigma_j)$. 
We claim that for each $\sigma_p$, the following holds:
    \begin{equation}\label{eq:intersectionxxx}
\sigma_p\cap\bigg(\partial(\Delta^m\times\Delta^n) \cup \sigma_1\cup \ldots \cup \sigma_{p-1}\bigg)=\bigcup_{x\in I(\sigma)\cup L(\sigma)}\partial_x\sigma_p.
\end{equation} 
For each $i \leq m$, the intersection of $\partial_i \Delta^m \times \Delta^n$ with $\sigma_p$ is the face of $\sigma_p$ obtained by omitting all the vertices lying on the line $\ell = (i,-)$. 
There is at least one vertex $x$ in $I(\sigma_p)$ or $L(\sigma_p)$ on this line. Hence, $\sigma_p \cap (\partial_i \Delta^m \times \Delta^n)$ is contained in the right-hand side of~\eqref{eq:intersectionxxx}. An analogous argument applies to $\Delta^m \times \partial_i \Delta^n$, so $\sigma_p\cap\partial(\Delta^m \times \Delta^n)$ is contained in the right-hand side.

By the way we have ordered the simplices $\sigma_q$, for each $\sigma_q$ with $q < p$, there is at least one vertex $x \in L(\sigma_p)$ that is not a vertex of $\sigma_q$. 
Hence, $\sigma_q \cap \sigma_p \subset \partial_x \sigma_p$, which establishes the left-hand inclusion of~\eqref{eq:intersectionxxx}. 
On the other hand, for each vertex $x \in I(\sigma_p)$, the face $\partial_x \sigma_p$ is clearly contained in $\partial(\Delta^m \times \Delta^n)$. 
For each $x \in L(\sigma_p)$, there is a unique $\sigma_q$ such that $u(\sigma_q) = u(\sigma_p) - 1$ and $\partial_x \sigma_p = \sigma_p \cap \sigma_q$.
Thus, we have proved~\eqref{eq:intersectionxxx}.

Note that for each $\sigma_p$, the number of vertices in $U(\sigma_p)$ is at most $n$, so the intersection~\eqref{eq:intersectionxxx} contains at least $(m+n+1) - n = m+1$ codimension-$1$ faces of $\sigma_p$. 
It follows that we can fill the simplices $\sigma_1, \ldots, \sigma_{N-1}$ one by one aplying fillings of $\mathrm{Horn}_{\geq m+1}$. 
Finally, we fill $\sigma_N$ using the filling of $\partial \Delta^{m+n} \subset \Delta^{m+n}$ (note that $U(\sigma_N) = \emptyset$).
    \end{proof}

\section{Non-commutative effect algebras}
Given two effect algebras $E$ and $F$, the mapping space $[N(E),N(F)]$ described in (the proof of) Theorem~\ref{thm:mappingSpace} exhibits no interaction between distinct vertices.  This is a consequence of the commutativity. In contrast, the non-commutative case is more subtle: 
interactions between distinct vertices arise, and the 
structure of the mapping space becomes significantly more involved.

The following is a non-commutative analogue of an effect algebra.
\begin{definition}[\cite{DV2001}]\label{def:PEA}
    A pseudo effect algebra is a partial algebra $(E,\oplus,0,1)$, where
    \begin{enumerate}[label=(\arabic*)]
        \item $(E,\oplus,0)$ is a partial monoid;
        \item for all $a,b\in E$, if $a\perp b$, 
        then there exist $a_1,b_1\in E$ such that $b_1\oplus a = a\oplus b = b\oplus a_1$;\label{it:braiding}
        \item for each $a\in E$, there are unique elements $a^-,a^\sim$ with $a^-\oplus a=1$ and $a\oplus a^\sim=1$;\label{it:leftRingtOrthosup}
        \item for each $a\in E$, if $a\oplus 1$ or $1\oplus a$ is defined, then $a=0$. 
    \end{enumerate}
\end{definition}

Pseudo effect algebras admit a partial order and satisfy cancellation property (see~\cite{DV2001} for details). Moreover, for each $a\in E$,
\begin{equation}\label{eq:nonComOrthosupplement}
    \{b\in E\mid b\perp a\}=[0,a^-].
\end{equation}

We call property~\ref{it:braiding} from Definition~\ref{def:PEA} \emph{braiding}. It is equivalent to the statement that $N(E)$ has the right lifting property with respect to the following two subsets of the middle cylinder:
\begin{equation}\label{diag:braiding}
    \begin{tikzcd}[row sep=10pt, column sep=20pt]
        1&1&                &&  1\arrow[r,"a_2"{description}]&1 &&& 1\arrow[r,"a_2"{description}]&1\\
                  & &{}\arrow[r,hook]&{}&            &  &{}&{}\arrow[l,hook']& &\\
         0\arrow[uu,"b"{description}]\arrow[r,"a_1"{description}]\arrow[uur]&0\arrow[uu,"b"{description}] &&& 0\arrow[uu,"b"{description}]\arrow[r,"a_1"{description}]\arrow[uur]&0\arrow[uu,"b"{description}] &&& 0\arrow[uu,"b"{description}]\arrow[uur]&0\arrow[uu,"b"{description}]
    \end{tikzcd}
\end{equation}
For $F\in\RelFA$, if $N(F)$ satisfies the RLP with respect to~\eqref{diag:braiding}, we say that $F$ is \emph{braided}, or equivalently, that \emph{$F$ admits a braiding}.

\medskip

We now show that braiding, together with cancellation, yields a well-behaved notion of conjugation. 
Let $E,F\in \PEA$ and let $f,g\colon E\to F$ be morphisms of partial monoids. 
We say that $f$ and $g$ are \emph{conjugate} if there exists $b\in F$ such that 
\[
  b\oplus f(a) \;=\; g(a)\oplus b \qquad \text{for all } a\in E.
\]

\begin{lemma}\label{lem:conjugate}
Let $E$ and $F$ be pseudo effect algebras, and let $f\colon E\to F$ be a morphism of partial monoids. 
Then for each $b\in [0,f(1)^{-}]$ there exists a unique morphism 
$g\colon E\to F$
such that 
$b\oplus f(a)=g(a)\oplus b$ for all $a\in E$.
\end{lemma}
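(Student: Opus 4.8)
The plan is to construct $g$ pointwise using the braiding axiom and to verify that it is a morphism by repeated use of the cancellation property; uniqueness will be immediate from cancellation, so the whole content lies in the existence and well-definedness of $g$.

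The first step is to establish that $b\perp f(a)$ for every $a\in E$. Since $a\le 1$, the right orthosupplement gives $1=a\oplus a^\sim$, and applying the partial-monoid morphism $f$ yields $f(1)=f(a)\oplus f(a^\sim)$; in particular $f(a)\le f(1)$. The hypothesis $b\in[0,f(1)^-]$ together with~\eqref{eq:nonComOrthosupplement} says exactly that $b\perp f(1)$, i.e.\ $b\oplus f(1)$ is defined. Substituting $f(1)=f(a)\oplus f(a^\sim)$ and invoking the associativity axiom of the partial monoid shows that $(b\oplus f(a))\oplus f(a^\sim)$ is defined, whence $b\oplus f(a)$ is defined as well. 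Now I would apply braiding (Definition~\ref{def:PEA}\ref{it:braiding}) to the orthogonal pair $(b,f(a))$ to obtain an element $c\in F$ with $c\oplus b=b\oplus f(a)$; by right cancellation such a $c$ is unique, so I may set $g(a):=c$. This defines a function $g\colon E\to F$ characterised by the identity $g(a)\oplus b=b\oplus f(a)$.

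The next step is to check that $g$ is a morphism of partial monoids. For the unit, $g(0)\oplus b=b\oplus f(0)=b=0\oplus b$ forces $g(0)=0$ by cancellation. For additivity, suppose $a_1\oplus a_2=a_3$; starting from $g(a_3)\oplus b=b\oplus f(a_3)=b\oplus\bigl(f(a_1)\oplus f(a_2)\bigr)$ and alternately rebracketing via associativity and rewriting each occurrence $b\oplus f(a_i)=g(a_i)\oplus b$, one arrives at $\bigl(g(a_1)\oplus g(a_2)\bigr)\oplus b$. In particular this shows $g(a_1)\perp g(a_2)$, and cancelling $b$ on the right gives $g(a_1)\oplus g(a_2)=g(a_3)$. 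Finally, any morphism $g'$ satisfying $g'(a)\oplus b=b\oplus f(a)$ obeys $g'(a)\oplus b=g(a)\oplus b$ for all $a$, so $g'=g$ by right cancellation, establishing uniqueness.

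I expect the only delicate point to be the definedness bookkeeping: because $\oplus$ is partial, every rebracketing in the additivity computation must be justified by the partial-monoid associativity axiom (which guarantees that one bracketing of an iterated sum is defined precisely when the other is), rather than treated as a formal identity. The reduction $b\perp f(a)$ carried out in the first step is exactly what makes all the intermediate sums defined, so that associativity can legitimately be applied throughout.
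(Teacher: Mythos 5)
Your proof is correct and follows essentially the same route as the paper's: define $g$ pointwise via braiding and cancellation, then verify additivity by the rebracketing computation $b\oplus f(a_1\oplus a_2)=g(a_1)\oplus g(a_2)\oplus b$ and cancel $b$. In fact, you spell out details the paper leaves implicit, namely that $f(a)\le f(1)$ and $b\perp f(1)$ guarantee $b\perp f(a)$, so that braiding applies and each $g(a)$ exists and is unique.
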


\begin{proof}
    Let $b\in[0,f(1)^{-}]$. For each $a\in E$, there is a unique $g(a)$ such that $b\oplus f(a)=g(a)\oplus b$. 
    Clearly, $g(0)=0$. Next, if $a_1,a_2\in E$ with $a_1\perp a_2$, then
    \begin{equation}\label{eq:conjugation1}
        b\oplus f(a_1\oplus a_2) = b\oplus f(a_1)\oplus f(a_2) 
        = g(a_1)\oplus b\oplus f(a_2) 
        = g(a_1)\oplus g(a_2)\oplus b,
    \end{equation}
    while
    \begin{equation}\label{eq:conjugation2}
        b\oplus f(a_1\oplus a_2) = g(a_1\oplus a_2)\oplus b.
    \end{equation}
    By cancellation, $g(a_1\oplus a_2)=g(a_1)\oplus g(a_2)$. Thus $g$ is a morphism of partial monoids.
\end{proof}

The proof that the pointwise-defined conjugate \(g\) is a morphism has a natural presentation in simplicial combinatorics. 
Consider the prism in~\eqref{diag:prism}, where the base encodes 
\(f(a) = f(a_1) \oplus f(a_2)\), and the three faces encode \(g(a_1)\), \(g(a_2)\), and \(g(a)\), are the conjugates of \(f(a_1)\), \(f(a_2)\), and \(f(a)\), respectively.
\begin{equation}\label{diag:prism}
    \vcenter{\hbox{%
    \begin{tikzcd}[row sep=12pt, column sep=30pt]
& 2'  & \\
  0' 
    \arrow[rr, "g(a_2)"{description}, bend right=5,near start=0.8] 
    \arrow[ru, "g(a)"{description}] 
  & & 
  1' 
    \arrow[lu,"g(a_1)"'{description}] \\
  &[20pt] & \\
  & 2 \arrow[uuu, "b"{description},near start=0.2] & \\
  0 \arrow[uuuur]\arrow[uuurr]
  \arrow[uuu, "b"{description}]
    \arrow[rr, "f(a_2)"{description}, bend right=5] 
    \arrow[ru, "f(a)"{description}] 
  & & 
  1 \arrow[uuuul]\arrow[uuu, "b"{description}]
    \arrow[lu, "f(a_1)"{description}] 
\end{tikzcd}
}}
\end{equation}

Since \(F\) is cancellative, it admits fillings of all \(3\)-horns. Hence we can fill the cup-shaped sub-simplicial set in three steps (see Example~\ref{ex:cup}) and obtain the top base, which gives the desired equation 
$g(a)=g(a_1)\oplus g(a_2)$.

Recalling Diagram~\eqref{diag:internalMap} from the proof of Theorem~\ref{thm:mappingSpace}, we deduce that conjugations between monoid morphisms 
\(
E \rightarrow F
\)
correspond exactly to edges in the mapping space
\(
[N(E),N(F)].
\) 
Given a morphism \(f\colon E \rightarrow F\) between pseudo effect algebras, Lemma~\ref{lem:conjugate} shows that all edges from the vertex corresponding to \(f\) are 
determined by the element \(f(1)\). This follows directly from Corollary~\ref{cor:KanFib}, which states that the evaluation morphism
\[
p\colon [E,F] \rightarrow [\underline{1},F],
\] 
induced by the unique morphism \(\underline{1} \rightarrow E\) in \(\PEA\) (where \(\underline{1}\) is initial), is a Kan fibration. 
The following proposition provides the key step for the proof.


\begin{proposition}\label{prop:bigFun}
    Let $\mathcal{L}$ and $\mathcal{M}$ be weakly saturated classes where
    \begin{itemize}
        \item $\mathcal{L}$ is generated by all horns $\Lambda^i_n$, $0\leq i\leq n$, and $\partial\Delta^k\subset\Delta^k$, $k\geq 2$;
        \item $\mathcal{M}$ is generated by~\eqref{eq:K} and braiding~\eqref{diag:braiding}.
    \end{itemize}
 Moreover, let $X\subseteq Y\in\SSet$ be a pair of simplicial sets satisfying:
\begin{enumerate}
    \item[(1)] $Y$ has a unique vertex (so that all edges are loops);
    \item[(2)] the edges in $X_1$ form a dominating set in $Y_1$ with respect to the preorder $\leq$ on $Y_1$, defined as the transitive closure of the relation
    \begin{eqnarray}\label{eq:defOfPreorder}
        a\leq b\quad\text{iff}\quad \exists c\in E, \mu\colon(a,c)\dashmapsto b\quad \text{or}\quad \mu\colon(c,a)\dashmapsto b.
    \end{eqnarray}
\end{enumerate}
Then
    \begin{equation}
        \mathcal{L}\boxprod (X\hookrightarrow Y)\subseteq \mathcal{M}. 
    \end{equation}
\end{proposition}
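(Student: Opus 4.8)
The plan is to combine a Rezk-style reduction on the $\mathcal{L}$-factor with a single filtration of the box-product total space ordered by the divisibility preorder~\eqref{eq:defOfPreorder}; the dominating-set hypothesis (2) is used precisely to keep every filling step a generalized horn, so that no full boundary inclusion $\partial\Delta^n\subset\Delta^n$ (which does \emph{not} lie in $\mathcal{M}$) is ever required.

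First I would apply Lemma~\ref{lem:boxAndClosure} to reduce $\mathcal{L}\boxprod(X\hookrightarrow Y)$ to $\overline{G\boxprod(X\hookrightarrow Y)}$, where $G=\{\Lambda^n_i\subset\Delta^n\}\cup\{\partial\Delta^k\subset\Delta^k\mid k\ge2\}$ generates $\mathcal{L}$. Since $\mathcal{M}$ is weakly saturated, it then suffices to show $g\boxprod(X\hookrightarrow Y)\in\mathcal{M}$ for a single generator $g$, i.e. to fill the total inclusion $\Delta^n\times X\cup\Lambda^n_i\times Y\subset\Delta^n\times Y$ (when $g=\Lambda^n_i$) or $\Delta^l\times X\cup\partial\Delta^l\times Y\subset\Delta^l\times Y$ (when $g=\partial\Delta^l$). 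By condition (1) the set $\mathrm{sk}_0 Y$ is a single vertex lying in $X$, so in each case the simplices still to be attached are exactly the nondegenerate simplices of $\Delta^{\dim g}\times Y$ projecting to simplices of $Y\setminus X$ and meeting the interior of the $\Delta^{\dim g}$-factor.

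The heart of the argument is the order in which these simplices are filled. A naive skeletal order attaches each $m$-simplex of $Y$ along its entire boundary, which forces the full cells $\partial\Delta^m\subset\Delta^m$ to appear (and, after Lemma~\ref{lem:hope}, cells of dimension $l+m$); these are not in $\mathcal{M}$, as Lemma~\ref{lem:essentialHelp} reaches only unions of at most $m$ facets. Instead I would order the simplices $\tau$ of $Y\setminus X$ so that ``larger'' simplices --- those whose composite edge lies nearer a dominating edge of $X$ under~\eqref{eq:defOfPreorder} --- are processed first, with the simplices of $X$ (and hence all of $\Delta^{\dim g}\times X$) present from the start. Condition (2) guarantees that every $\tau\in Y\setminus X$ lies below some simplex supported on $X_1$, so at the moment $\tau$ is reached the prism $\Delta^{\dim g}\times\tau$ shares a facet with an already-filled prism pointing toward $X$; this shared facet supplies a free face for each top-dimensional simplex of $\Delta^{\dim g}\times\tau$, turning every filling into a generalized horn rather than a full cell. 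Establishing that this ``flow toward $X$'' is acyclic and exhausts $Y\setminus X$ --- so that the free face is always available --- is the main obstacle and the only place where hypothesis (2) is genuinely used.

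It remains to certify each individual filling. Filling the top-dimensional simplices of $\Delta^{\dim g}\times\tau$ by the lattice-path order of Lemma~\ref{lem:hope}, the cases with $\dim g\ge3$ or $\dim\tau\ge3$ are supplied by Lemma~\ref{lem:joyal} and land in $\overline{\mathrm{Horn}}_{\ge3}\subseteq\mathcal{M}$, the free facet ensuring the final simplex is again only a generalized horn; here a $\partial\Delta^l$-factor only helps, as it contributes extra free faces. The remaining low-dimensional pieces are products of a $\Delta^2$-factor of $g$ with a $\Delta^2$-part of $\tau$: with a free facet on the $\tau$-side these are exactly the inclusions dispatched into $\mathcal{K}\subseteq\mathcal{M}$ by Lemmas~\ref{lem:lotsOfWork} and~\ref{lem:lotsOfWork2}, the sole exception being the inner square ($i=j=1$) configuration, which is the conjugation square of~\eqref{diag:prism} and is filled by the braiding generators~\eqref{diag:braiding} placed in $\mathcal{M}$ for this purpose. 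Throughout, condition (1) is what makes every edge a loop, so that these squares have the uniform shape of~\eqref{diag:braiding} and the two factors of each prism interact as in Lemmas~\ref{lem:lotsOfWork}--\ref{lem:lotsOfWork2}.
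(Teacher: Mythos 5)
Your proposal has a genuine gap, and it sits exactly where you put the weight of the argument: the claim that hypothesis (2) lets you order the fillings so that ``no full boundary inclusion $\partial\Delta^n\subset\Delta^n$ \ldots is ever required.'' This cannot be made to work, for a homotopy-theoretic reason. Every generator you permit yourself --- the horns of~\eqref{eq:K}, the two associativity inclusions (inclusions of contractible complexes into $\Delta^3$), and the braiding maps~\eqref{diag:braiding} (each of which is a pushout of a $2$-horn $\Lambda^2_0$ or $\Lambda^2_2$) --- is a monomorphism and a weak homotopy equivalence, and the class of such maps is weakly saturated. Hence anything produced by your filtration, however the simplices of $Y\setminus X$ are ordered, is a weak equivalence. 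But $(\partial\Delta^2\subset\Delta^2)\boxprod(X\hookrightarrow Y)$ is in general not one: after geometric realization its cofiber is $(\Delta^2/\partial\Delta^2)\wedge(Y/X)\simeq S^2\wedge(Y/X)$. Take $E$ the four-element Boolean algebra and $X=N(\underline{1})\subseteq Y=N(E)$, which satisfies (1) and (2); then $Y/X$ has $H_1\cong\mathbb{Z}$, so the cofiber has $H_3\cong\mathbb{Z}$ and the box product lies in no weakly saturated class of weak equivalences. So the ``flow toward $X$'' whose acyclicity you defer as ``the main obstacle'' is not merely unproved --- it is unprovable: genuine cell attachments are unavoidable. (Two secondary defects point the same way: the preorder~\eqref{eq:defOfPreorder} is defined only on edges, so it does not order the higher simplices of $Y\setminus X$ as your filtration requires; and two prisms $\Delta^n\times\tau$, $\Delta^n\times\tau'$ over distinct $2$-simplices of $Y$ share only $\Delta^n\times(\tau\cap\tau')$, a union of squares, not a facet, so no free face is supplied.)

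The paper's proof embraces the cells rather than avoiding them, and this is the real role of hypothesis (2). It factors $X\hookrightarrow Y$ in one variable: domination of $X_1$ is used only to produce $X\subseteq Y'\subseteq Y$ with $\mathrm{sk}_1Y\subseteq Y'$, where $X\subseteq Y'$ is a transfinite composite of pushouts of the reduced $2$-simplex attachments $g_0,g_1,g_2$; the rest, $Y'\subseteq Y$, lies in $\overline{\Cell_{\geq 2}}$ by Lemma~\ref{lem:fillingCells}. Lemma~\ref{lem:boxAndClosure} then reduces everything to finite, generator-against-generator box products, dispatched by Lemmas~\ref{lem:joyal}, \ref{lem:lotsOfWork}, \ref{lem:lotsOfWork2} and~\ref{lem:hope}, plus two explicit prism fillings --- and it is there, in $g_i\boxprod(\Lambda^1_j\subset\Delta^1)$, that braiding enters, not at an inner $\Lambda^2_1$-against-$\Lambda^2_1$ square, which never arises in this decomposition (the $Y$-side factors are only $\Lambda^2_0$, $\Lambda^2_2$ and $\Delta^{\{0,2\}}\cup\Delta^{\{1\}}$). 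Note finally that Lemma~\ref{lem:hope} outputs, besides horns, a cell of dimension $m+n\geq 3$; so even the paper's own argument lands in the saturation of~\eqref{eq:K}, braiding, \emph{and} $\Cell_{\geq 3}$, which is evidently the intended reading of $\mathcal{M}$ (harmless for Corollary~\ref{cor:KanFib}, since $N(F)$ is $2$-coskeletal). You read the statement literally, with cells excluded, and built a strategy around a constraint that is impossible to satisfy; the correct repair is to enlarge $\mathcal{M}$, not to eliminate the cells.
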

Before proving the proposition, we state the corollary promised above.
\begin{corollary}\label{cor:KanFib}
    Let $E$ and $F$ be a pair of pseudo effect algebras, and let $\underline{1}\rightarrow E$ be the unique morphism in $\PEA$. 
    Then 
    $$
        p\colon [E,F] \longrightarrow [\underline{1},F]
    $$ 
    is a minimal Kan fibration. Moreover, $p$ admits a unique RLP with respect to:
    \begin{enumerate}
        \item[(1)] $\partial\Delta^n \subset \Delta^n$, $n \geq 2$;
        \item[(2)] $\Delta^1 \subset \Sim^1$.
    \end{enumerate}
\end{corollary}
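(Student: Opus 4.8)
The plan is to reduce every lifting problem for $p$ to one for $N(F)$ against a box product and then invoke Proposition~\ref{prop:bigFun}. Write $j\colon N(\underline 1)\hookrightarrow N(E)$ for the monomorphism obtained by applying the fully faithful, product-preserving nerve (Proposition~\ref{prop:monoidalFunctor}) to the unique map $\underline 1\to E$; its image is the sub-$\epsilon$-simplicial set on the subalgebra $\{0_E,1_E\}\cong\underline 1$, so $p=[j,N(F)]$. Since $\eSSet$ is cartesian closed, the three equivalent lifting problems of~\eqref{diag:3equivalentLifProb} give, for any monomorphism $s$, the equivalence $s\,\boxslash\,p \iff (s\boxprod j)\,\boxslash\,N(F)$, and this equivalence also respects uniqueness of lifts. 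Thus it suffices to analyze $s\boxprod j$ for $s$ ranging over horns, cells, and the marking map $\Delta^1\subset\Sim^1$.

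For $s$ a horn $\Lambda^n_i\subset\Delta^n$ or a cell $\partial\Delta^n\subset\Delta^n$ with $n\ge 2$ (the generators of the class $\mathcal{L}$), the domain and codomain of $s$ carry no $\epsilon$-edges, hence neither does $s\boxprod j$, since an $\epsilon$-edge of a product is a pair of $\epsilon$-edges and $\Delta^n$ has none. Consequently $(s\boxprod j)\,\boxslash\,N(F)$ is literally the underlying simplicial lifting problem, and I may work in $\SSet$. I then verify the hypotheses of Proposition~\ref{prop:bigFun} for $X=N(\underline 1)\subseteq N(E)=Y$: the nerve of a partial monoid has a single vertex, and the preorder~\eqref{eq:defOfPreorder} is exactly the natural order of $E$, for which $1_E$ is the greatest element; hence the edge $1_E\in N(\underline 1)_1$ alone dominates $N(E)_1$, giving condition~(2). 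Proposition~\ref{prop:bigFun} yields $s\boxprod j\in\mathcal{M}$, and $N(F)$ has the RLP with respect to every generator of $\mathcal{M}$: the horns and associativity inclusions of~\eqref{eq:K} by Theorem~\ref{thm:char} together with cancellation (which supplies $3$-horn fillings), and the braiding~\eqref{diag:braiding} because $F$, being a pseudo effect algebra, is braided by property~\ref{it:braiding} of Definition~\ref{def:PEA}. Hence $N(F)\,\boxslash\,\mathcal{M}$, so $s\,\boxslash\,p$ for all horns and all cells with $n\ge 2$; in particular $p$ is a Kan fibration.

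For minimality and the unique-RLP claim~(1) I upgrade each filling to a \emph{unique} one. Because $F$ is a cancellative pseudo effect algebra, $\oplus$ is a genuine partial \emph{function}; combined with the $2$-coskeletality clause of Theorem~\ref{thm:char} this forces uniqueness of the fillings of all generators of $\mathcal{M}$. For instance, a lift against $\partial_0\Delta^3\cup\partial_2\Delta^3\subset\Delta^3$ must realize the missing edge $\Delta^{\{0,2\}}$ as $x\oplus y$ for prescribed $x,y$, which is determined by single-valuedness, and then the whole $3$-simplex is determined by coskeletality; the braiding filling is similarly forced by cancellation. The unique-lifting version of the Galois connection is again weakly saturated and is preserved by the adjunction~\eqref{diag:3equivalentLifProb}, so $N(F)$ has the unique RLP with respect to $\mathcal{M}$, whence $p$ has the unique RLP with respect to $\mathcal{L}$. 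This delivers both minimality (unique filling of every horn) and~(1).

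The remaining step, and the one I expect to be the main point to get right, is the marking map $s=(\Delta^1\subset\Sim^1)$ of claim~(2), which lies outside Proposition~\ref{prop:bigFun} (here $s\boxprod j$ genuinely involves $\epsilon$-edges). I argue directly on $\epsilon$-edges. By the product description of $\epsilon$-edges, an edge $H\colon\Delta^1\times N(E)\to N(F)$ of $[N(E),N(F)]$ underlies a witness precisely when $H$ carries the diagonal edge over $1_E$ to the unique $\epsilon$-edge $1_F$ of $N(F)$; since $\{1_F\}\hookrightarrow F$ is injective, such a witness is unique when it exists, so being an $\epsilon$-edge is a \emph{property}. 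As the unique morphism sends $1_{\underline 1}\mapsto 1_E$, the edge $p(H)$ is an $\epsilon$-edge of $[N(\underline 1),N(F)]$ iff $H$ sends the diagonal over $1_E$ to $1_F$ iff $H$ is itself an $\epsilon$-edge. Thus $p$ reflects $\epsilon$-edges, which (given the property-ness just noted) is exactly the unique RLP with respect to $\Delta^1\subset\Sim^1$, completing the proof.
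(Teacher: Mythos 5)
Your proposal is correct and follows essentially the same route as the paper: reduce lifting problems for $p$ through the adjunction to lifting problems for $N(F)$ against box products with $j\colon N(\underline{1})\hookrightarrow N(E)$, verify the hypotheses of Proposition~\ref{prop:bigFun}, use that $N(F)$ has the unique RLP with respect to the generators of $\mathcal{M}$, and treat the marking map $\Delta^1\subset\Sim^1$ separately. Your direct argument for case (2) (that $p$ reflects $\epsilon$-edges because all witnesses of $\Sim^1\times N(E)$ lie over $1_E$) is just an element-wise restatement of the paper's observation that $(\Delta^1\subset\Sim^1)\boxprod j$ is an isomorphism, and your explicit verification of uniqueness of fillings (which the paper compresses into ``by assumption'') is a welcome addition.
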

\begin{proof}
Let $\mathcal{L}$ and $\mathcal{M}$ be as in Proposition~\ref{prop:bigFun}. 
Consider the inclusion $i\colon N(\underline{1})\hookrightarrow N(E)$, and note that $X=N(\underline{1})$ and $Y=N(E)$ satisfy conditions (1) and (2) of Proposition~\ref{prop:bigFun}. 
We aim to show that the morphism 
\[
p\colon [N(E),N(F)] \longrightarrow [N(\underline{1}),N(F)]
\] 
has the unique right lifting property with respect to every map $f\in \mathcal{L}$.

For each $f\in\mathcal{L}$, the corresponding lifting problems can be expressed equivalently as:
\begin{equation}
\begin{tikzcd}
    {\cdot} \arrow[r]\arrow[d,"f"'] & {[N(E),N(F)]\arrow[d,"p"]} &&& {\cdot}\arrow[d,"f\boxprod i"']\arrow[r] & {N(f)}\\
    {\cdot}\arrow[ru,dotted] \arrow[r] & {[\underline{1},N(F)]} &&& {\cdot}\arrow[ur,dotted] &
\end{tikzcd}
\end{equation}
By Proposition~\ref{prop:bigFun}, we have $f\boxprod i \in \mathcal{M}$, and by assumption, $N(F)$ admits a unique lifting property with respect to $\mathcal{M}$. 

It remains to verify property (2). Expanding the definitions, we observe that
\begin{equation}
    (\Delta^1 \subset \Sim^1)\boxprod (\underline{1}\subset N(E)) = \Delta^1 \times N(E) \coprod_{\Delta^1 \times \underline{1}} \Sim^1 \times \underline{1} \subseteq \Sim^1 \times N(E),
\end{equation}
which is in fact the identity, since $\Sim^1 \times \underline{1}$ contains all the relevant $\epsilon$-edges.
\end{proof}
\begin{proof}[Proof of Proposition~\ref{prop:bigFun}]
Consider the following inclusions:
\begin{enumerate}
    \item[(a)] $g_i\colon (\Lambda^2_i)_{\mathrm{red}} \hookrightarrow \Delta^2_{\mathrm{red}}$, for $i=0,2$;
    \item[(b)] $g_1\colon \Delta^{\{0,2\}}_{\mathrm{red}} \hookrightarrow \Delta^3_{\mathrm{red}}$;
    \item[(c)] $b_k\colon \partial\Delta^k \hookrightarrow \Delta^k$, for $k\geq 2$. 
\end{enumerate}

We claim that the inclusion $X\subseteq Y$ arises as a transfinite composition of pushouts of the morphisms $g_0,g_1,g_2$ and the $b_k$'s. By the assumption that edges in $X$ form a dominating subset of $Y_1$, there exists a simplicial set $Y'$ with $X \subseteq Y' \subseteq Y$ such that $Y'$ contains all edges of $Y$, and $X\subset Y'$ belongs to the weakly saturated class generated by $g_0,g_1,g_2$. 
By Lemma~\ref{lem:fillingCells}, the inclusion $Y'\subseteq Y$ belongs to the weakly saturated class generated by $g_k$'s, $k\geq 2$.

Our goal is to show that for each $g$ from (a--c), we have $\mathcal{L}\boxprod g \subseteq \mathcal{M}$. We discuss the cases (a--c) separately.

\smallskip
\emph{Case $g_i$, $i=0,2$:} Both cases are analogous; we focus on $g_2$. Note that $g_2$ is a pushout of $\tilde{g}_2\colon \Lambda^2_2 \subset \Delta^2$ due to Lemma~\ref{lem:reduction}. 

\begin{itemize}
    \item $g_2\boxprod (\Lambda^1_j \subset \Delta^1)$, $j=1$: the domain of the morphism in question has the following shape:
    \begin{equation}\label{diag:prismCase2}
    \begin{tikzcd}[row sep=10pt, column sep=20pt]
    & 0'  & \\
      0' \arrow[ru,"c'"{description}] & & 0' \arrow[lu,"b'"{description}] \\
      & & \\
      & 0 \arrow[uuu, "d"{description}] & \\
      0 \arrow[uuuur] \arrow[uuu, "d"{description}] \arrow[rr, "a"{description}, bend right=5] 
      \arrow[ru, "c"{description}] & & 0 \arrow[uuu, "d"{description}] \arrow[lu, "b"{description}] \arrow[luuuu]
    \end{tikzcd}
    \end{equation}
    It is easy to see that this shape can be completed into the whole prism using $3$-horn fillings, associativity, and braiding. The case $j=0$ is analogous.

    \item $g_2\boxprod (\Lambda^2_i \subset \Delta^2)$, $i=0,1,2$: Since $g_2$ is a pushout of $\tilde{g}_2$, it suffices to show $\tilde{g}_2\boxprod (\Lambda^2_i \subset \Delta^2) \in \mathcal{M}$, which follows from Lemma~\ref{lem:lotsOfWork}.

    \item $g_2\boxprod (\Lambda^n_i \subset \Delta^n)$, $n\geq 3$: Since $\Lambda^n_i\subset\Delta^n$ belongs to $\overline{\mathrm{Cell}}_{\geq 2}$, Lemma~\ref{lem:joyal} yields $\tilde{g}_2\boxprod (\Lambda^n_i \subset \Delta^n) \in \mathcal{M}$.

    \item $g_2\boxprod (\partial\Delta^n \subset \Delta^n)$, $n\geq 2$: This follows from Lemma~\ref{lem:joyal} applied to $\tilde{g}_2$, since $n+1\geq 3$.
\end{itemize}
\smallskip
\emph{Case $g_1$:} Note that $g_1$ is a pushout of $\tilde{g}_1\colon \Delta^{\{0,1\}}\cup \Delta^{\{1\}} \subset \Delta^2$ by Lemma~\ref{lem:reduction}. 

\begin{itemize}
    \item $g_1\boxprod (\Lambda^1_j)$: For $j=1$, the domain of the monomorphism has the following shape:
    \begin{equation}\label{diag:prismCase3}
    \begin{tikzcd}[row sep=10pt, column sep=20pt]
    & 0'  & \\
      0' \arrow[ru,"c'"{description}] & & 0' \\
      & & \\
      & 0 \arrow[uuu, "d"{description}] & \\
      0 \arrow[uuuur] \arrow[uuu, "d"{description}] \arrow[rr, "a"{description}, bend right=5] 
      \arrow[ru, "c"{description}] & & 0 \arrow[uuu, "d"{description}] \arrow[lu, "b"{description}] 
    \end{tikzcd}
    \end{equation}
    Again, using associativity (twice), braiding (twice), and $3$-horn fillings, we can complete the prism.

    \item $g_1\boxprod (\Lambda^2_i \subset \Delta^2)$, $i=0,1,2$: Since $g_1$ is a pushout of $\tilde{g}_1$, it suffices to apply Lemma~\ref{lem:lotsOfWork} to show that $\tilde{g}_1\boxprod (\Lambda^2_i \subset \Delta^2)\in\mathcal{M}$.

    \item $g_1\boxprod (\Lambda^n_i \subset \Delta^n)$, $n\geq 3$: We can apply Lemma~\ref{lem:joyal} to $(\Lambda^3_i\subset\Delta^3)\boxprod \tilde{g}_1$.

    \item $g_1\boxprod (\partial\Delta^n\subset\Delta^n)$, $n\geq 3$: Since $g_1\in \overline{\mathrm{Cell}}_{\geq 1}$, Lemma~\ref{lem:hope} implies $g_1\boxprod (\partial\Delta^n\subset\Delta^n) \in \mathcal{M}$.
\end{itemize}

\smallskip
\emph{Case $b_k\colon \partial\Delta^k\subset\Delta^k$, $k\geq 2$:} This case follows directly from Lemmas~\ref{lem:joyal} and~\ref{lem:hope}.
\end{proof}

Our next aim is to show that $[N(E),N(F)]$ has the structure of $\mathrm{FA}$ for each pair $E,F\in\PEA$. Thanks to Corollary~\ref{cor:KanFib}, it suffices consider the case $[\underline{1},N(F)]$, and then lift the result to $[N(E),N(F)]$.

\begin{proposition}\label{prop:smallFun}
    Let $\mathcal{L}_1$ and $\mathcal{L}_2$ be weakly saturated classes, where 
    \begin{itemize}
        \item    $\mathcal{L}_1$ is generated by (i--iii) from Theorem~\ref{thm:char} and $\mathrm{Horn}_{\geq 3}$,
        \item  $\mathcal{L}_2$ is generated by $\mathcal{L}_1$ and braiding~\eqref{diag:braiding}.
    \end{itemize}
    We have 
\begin{equation}
    \mathcal{L}_i\boxprod(\emptyset\subset\Sim^1_{\mathrm{red}})\subseteq\mathcal{L}_i.
\end{equation}
\end{proposition}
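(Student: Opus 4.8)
The plan is to exploit that, for the fixed map $\iota=(\emptyset\subset\Sim^1_{\mathrm{red}})$, the operation $(-)\boxprod\iota$ on the arrow category of $\eSSet$ is a left adjoint, hence preserves pushouts, transfinite compositions and (being a functor) retracts; so it suffices to prove $f\boxprod\iota\in\mathcal{L}_i$ for each generator $f$ of $\mathcal{L}_i$. Since the second leg of $\iota$ is $\emptyset$, we have $f\boxprod\iota=f\times\mathrm{id}_{\Sim^1_{\mathrm{red}}}$. I factor $\emptyset\to\Delta^0\xrightarrow{v}\Sim^1_{\mathrm{red}}$; by Lemma~\ref{lem:reduction} (with $Y=\Sim^1$, $X=\partial\Delta^1$) the map $v$ is a pushout of $\partial\Delta^1\subset\Sim^1$, and the behaviour of the Leibniz product under composition in the second variable then exhibits $f\times\mathrm{id}_{\Sim^1_{\mathrm{red}}}$ as a pushout of $u(f):=f\boxprod(\emptyset\subset\Delta^0)$ followed by a pushout of $f\boxprod(\partial\Delta^1\subset\Sim^1)$. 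Here $A\times\Delta^0$ is the underlying simplicial set of $A$ with empty marking, so $u$ forgets $\epsilon$-edges. Thus the task splits into checking $u(f)\in\mathcal{L}_i$ and $f\boxprod(\partial\Delta^1\subset\Sim^1)\in\mathcal{L}_i$.

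First I dispose of the unmarked generators (cells, horns, associativity, braiding), for which $u(f)=f$ and the marking of $\Sim^1$ is annihilated, so $f\boxprod(\partial\Delta^1\subset\Sim^1)$ is the unmarked box product $f\boxprod(\partial\Delta^1\subset\Delta^1)$. For a cell $\partial\Delta^n\subset\Delta^n$ ($n\geq3$) this lies in $\mathcal{L}_1$ by Lemma~\ref{lem:hope} (the resulting horns and cell have dimension $\geq n+1\geq4$); for a horn $\Lambda^n_i\subset\Delta^n$ ($n\geq3$) by Lemma~\ref{lem:joyal} (the resulting horns have dimension $\geq n\geq3$). For the associativity and braiding generators, $f\boxprod(\partial\Delta^1\subset\Delta^1)$ is a three–dimensional prism which I fill one top-cell at a time in the manner of Example~\ref{ex:cup}: the interior $3$-faces are produced by $3$-horns and by the associativity (resp.\ braiding) generator, and each $4$-cell is then closed by a $4$-horn. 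A direct check of the filling order shows that only $\partial_0\cup\partial_2$-configurations and horns of dimension $\geq3$ occur (no $2$-horn and no $\partial_1\cup\partial_3$-configuration), so the prism lies in $\mathcal{L}_1$, resp.\ $\mathcal{L}_2$.

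Next come the $\epsilon$-horns. Applying $u$ sends $\ELambda^n_0\subset\Sim^n$ to $\Lambda^n_0\subset\Delta^n$, which is itself a generator as soon as $n\geq3$; so for $n\geq3$ the reduction of the first paragraph is legitimate and only $f\boxprod(\partial\Delta^1\subset\Sim^1)$ remains. Now $\Sim^n\times\Sim^1$ carries a single $\epsilon$-edge $\delta$, the product of the marked edges $\Delta^{\{0,n\}}$ and $\Delta^{\{0,1\}}$, which is the first-to-last edge of every top-cell of the prism; for $n\geq2$ it already lies in $\ELambda^n_0\times\Sim^1$. Hence the underlying morphism is again $(\Lambda^n_0\subset\Delta^n)\boxprod(\partial\Delta^1\subset\Delta^1)$, now performed in $\eSSet$ with $\delta$ merely transported. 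For $n\geq3$ Lemma~\ref{lem:joyal} keeps every filling in dimension $\geq3$, and unmarked horns leave the marking of $\delta$ untouched, so $f\boxprod(\partial\Delta^1\subset\Sim^1)\in\mathcal{L}_i$.

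The two generators $\ELambda^1_0\subset\Sim^1$ and $\ELambda^2_0\subset\Sim^2$ are where the argument becomes delicate; as their underlying maps are a $1$- and a $2$-horn (not in $\mathcal{L}_i$), I treat them directly, building $\Sim^n\times\Sim^1_{\mathrm{red}}$ from $\ELambda^n_0\times\Sim^1_{\mathrm{red}}$ by generating pushouts. For $n=2$ the edge $\delta$ is still present in the domain, but the filling is forced to create one interior $2$-simplex containing $\delta$ before the adjacent top-cells can be closed; being bounded by $\delta$ and an ordinary edge, this $2$-simplex is supplied by the generator $\ELambda^2_0$ in place of an unavailable $2$-horn, after which $3$-horns finish the job. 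The genuine obstacle is $n=1$, where $\delta$ itself must be \emph{created} — and no morphism of $\mathcal{L}_i$ introduces an $\epsilon$-edge between two vertices already present. The construction survives precisely because the marked edge of $\Sim^1_{\mathrm{red}}$ is a \emph{loop}: in $\Sim^1\times\Sim^1_{\mathrm{red}}$ the edge $\delta$ runs from the vertex $(0,v)$ of the domain $\{0\}\times\Sim^1_{\mathrm{red}}$ to the \emph{fresh} vertex $(1,v)$, so $\delta$ and its endpoint can be introduced together by a pushout of $\ELambda^1_0\subset\Sim^1$ itself; the remaining vertical edge, the loop at $(1,v)$, and the $2$-cells of this cylinder are then closed by ordinary and $\epsilon$-horns. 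Exhibiting an explicit admissible order for that cylinder, and checking that every attaching map is exactly a generating inclusion (so that no $2$-horn or $\partial_1\cup\partial_3$-configuration slips in), is the crux on which the proposition rests.
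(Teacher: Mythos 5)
Your overall architecture coincides with the paper's own proof: reduce to generators of $\mathcal{L}_i$, factor $\emptyset\subset\Sim^1_{\mathrm{red}}$ as $\emptyset\subset\Delta^0$ followed by a pushout of $\partial\Delta^1\subset\Delta^1$ via Lemma~\ref{lem:reduction} (the paper invokes Lemma~\ref{lem:boxAndClosure} at this point), dispatch the unmarked cells and horns by Lemma~\ref{lem:hope} and Lemma~\ref{lem:joyal}, reduce the $\epsilon$-horns with $n\geq 3$ to ordinary horns, and treat $\ELambda^1_0\subset\Sim^1$ and $\ELambda^2_0\subset\Sim^2$ directly. All of these reductions are sound and match the paper.

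The genuine gap is exactly where you place it yourself: the two low-dimensional $\epsilon$-horn cases are never proved, and the partial details you do give are incorrect, so the proposal is an outline rather than a proof. For $n=1$, after the pushout of $\ELambda^1_0\subset\Sim^1$ creating the marked diagonal $\delta$ and the fresh vertex, ordinary horns contribute nothing (only $\mathrm{Horn}_{\geq 3}$ is available and the cylinder is $2$-dimensional); what finishes the job is two successive pushouts of $\ELambda^2_0\subset\Sim^2$: the first, attached along the pair (loop at the old vertex, $\delta$), produces the horizontal edge $h$ together with one triangle, and the second, attached along $(h,\delta)$, produces the loop at the new vertex and the other triangle. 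For $n=2$, your claim that one interior $2$-simplex through $\delta$ followed by $3$-horns suffices is false: after a single $\ELambda^2_0$-filling, say of $\Sim^{\{0,1',2'\}}$, the prism $3$-simplex $\Delta^{\{0,0',1',2'\}}$ can indeed be closed by a $3$-horn, but each of the two remaining $3$-simplices $\Delta^{\{0,1,2,2'\}}$ and $\Delta^{\{0,1,1',2'\}}$ is still missing two faces (both require $\Delta^{\{0,1,2'\}}$, hence the edge $\Delta^{\{1,2'\}}$), so no $3$-horn applies and a second pushout of $\ELambda^2_0$ is unavoidable. The paper's proof consists precisely of this bookkeeping, in the order $\Sim^{\{0,1',2'\}}$, $\Sim^{\{0,0',1',2'\}}$, $\Sim^{\{0,1,2'\}}$, $\Sim^{\{0,1,2,2'\}}$, $\Sim^{\{0,1,1',2'\}}$. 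The same criticism applies to associativity and braiding: the associativity prism $\Delta^3\times\Delta^1$ is $4$-dimensional (not three-dimensional as you state), and your assertion that ``a direct check'' shows no $\partial_1\Delta^3\cup\partial_3\Delta^3$-configuration arises is claimed but not performed, whereas the paper carries it out via the adjacency graph of the four $4$-simplices and Lemma~\ref{lem:essentialHelp}. Since these explicit admissible fillings are the actual mathematical content of the proposition, the missing verifications constitute a real gap, not a routine omission.
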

\begin{proof}
    First assume $f\colon A\rightarrow B$ is a generating morphism of $\mathcal{L}_0$, $\mathcal{L}_1$, respectivelly, not involving $\epsilon$-edges (associativity, $3$-coskeletality, cancelativity).
    In this case $B\times \Sim^1_{\mathrm{red}}$ does not contain any $\epsilon$-edge, hence $f\boxprod (\emptyset\subset\Sim^1_{\mathrm{red}})$ equals $f\boxprod (\emptyset\subset\Delta^1_{\mathrm{red}})$.
    
    Inclusion $\emptyset\hookrightarrow (\Delta^1_{\mathrm{red}})$ admits a splitting 
    \begin{equation}
        \emptyset \subset \Delta^0 \subset \Delta^1_{\mathrm{red}},
    \end{equation}
    where $\Delta^0\subset \Delta^1_{\mathrm{red}}$ is a pushout of $\partial \Delta^1\subset\Delta^1$ (Lemma~\ref{lem:reduction}). By Lemma~\ref{lem:boxAndClosure}, it is enough to show that
    $f\boxprod (\emptyset\subset \Delta^0)$ and $f\boxprod(\partial\Delta^1\subset\Delta^1)$ belong to $\mathcal{L}_1$, $\mathcal{L}_2$, respectively. Since $f$ does not contain any $\epsilon$-edge, the former morphism is isomorphic to $f$, hence it belongs to $\mathcal{L}_1$, $\mathcal{L}_2$, respectivelly.
We will solve the second case separately for each type of morphism $f$.

Associativity: we need to verify that
\begin{equation}
    (\partial_0\Delta^3\cup\partial_2\Delta^3)\times \Delta^1\coprod_{(\partial_0\Delta^3\cup\partial_2)\times\partial\Delta^1}  \Delta^3\times\partial\Delta^1 \quad\subset\quad \Delta^3\times\Delta^1\quad\in\mathcal{L}_1.
\end{equation}
Consider the adjancency graph of $4$-simplices of $\Delta^3\times\Delta^1$:

\begin{equation}\label{diag:graph1}
    \begin{tikzcd}[row sep=15pt, column sep=7pt]
\Delta^{\{0,\markindex{1},2,3,\markindex{3'}\}}\arrow[r,no head]&\Delta^{\{0,\markindex{1},2,2',\markindex{3'}\}}\arrow[r,no head]&\Delta^{\{0,1,1',2',\markindex{3'}\}}\arrow[r,no head]&\Delta^{\{\markindex{0},0',\markindex{1'},2',\markindex{3'}\}}\\
\end{tikzcd}
\end{equation}
It is straightforward to show that using Lemma~\ref{lem:essentialHelp} we can fill all the $4$-simplices.

$\mathrm{Horn}_{\geq 3}$: 
For each $n\geq 3$, the morphism $(\Lambda^n_i\subset \Delta^n)\boxprod (\partial_1\subset\Delta^1)$ belongs to $\mathcal{L}_1$ by Lemma~\ref{lem:joyal}.

$2$-coskeletality: For each $n\geq 3$ we have $(\partial\Delta^n\subset \Delta^n)\boxprod(\partial\Delta^1\subset\Delta^1)\in\mathcal{L}_1$ by Lemma~\ref{lem:hope}. 

Braiding: let $f$ be one of the morphism in~\eqref{diag:braiding}. Clearly, $f$ is a pushout of $\Lambda^2_0$, or $\Lambda^2_2$. 
Hence, it is enough to show that $(\Lambda^2_i\subset\Delta^2)\boxprod(\partial\Delta^1\subset\Delta^1)$, $i=0,2$, belongs to $\mathcal{L}_2$, which is easy.

It remains to deal with $\epsilon$-horn inclusions. Since cancelation property, we only need to consider $\ELambda^n_{i}\subset\Sim^n$, for $n=1,2$, $i=0,n$.
The cases $i=0,n$ are analogous, so we only focus on the case $i=0$.

$n=1$:  $(\ELambda^1_0\subset \Sim^1)\boxprod (\emptyset\subset (\Sim^1_{\mathrm{red}}))$ is pictured in the diagram~\eqref{diag:1hornExp}, where $\Delta^{\{0,1\}}$ and $\Delta^{\{0',1'\}}$ denote the same edge of the codomain cylinder.
\begin{equation}\label{diag:1hornExp}
    \begin{tikzcd}[row sep=6pt, column sep=20pt]
       0'&&&0'\arrow[r]&1'\\
       &{\ }\arrow[r,hook]&{\ }&&\\
       0\arrow[uu]&&&0\arrow[r]\arrow[uu]\arrow[ruu,Tarrow]&1\arrow[uu]
    \end{tikzcd}
    \end{equation}
  
    One can fill the cylinder in three steps. First we obtain the $\epsilon$-edge $\Sim^{\{0,1'\}}$, then we obtain the simplex $\Sim^{\{0,0'1'\}}$, and finally we yield $\Sim^{\{0,1,1'\}}$.

   $n=2$: the morphism $\ELambda^2_0\times \Sim^1_{\mathrm{red}} \subset \Sim^2\times\Sim^1_{\mathrm{red}}$ in concern is pictured in diagram~\eqref{diag:anotherPrism}, 
   where we identify the $2$-simplices $\Delta^{\{0,1,2\}}$ and $\Delta^{\{0',1',2'\}}$.

    \begin{equation}\label{diag:anotherPrism}
    \begin{tikzcd}[row sep=8pt, column sep=20pt]
        &2'&&&&&   &2'&\\
        0'\arrow[ru]\arrow[rr]&&1' &&&& 0'\arrow[ru]\arrow[rr]&&1'\arrow[lu]\\
        && &{}\arrow[rr,hook]&  &{}&&&\\
        &2\arrow[uuu]& && &&&2\arrow[uuu]&\\
        0\arrow[rruuu]\arrow[ruuuu,Tarrow]\arrow[uuu]\arrow[ur]\arrow[rr]&&1\arrow[uuu] &&&& 0\arrow[rruuu]\arrow[ruuuu,Tarrow]\arrow[uuu]\arrow[ur]\arrow[rr]&&1\arrow[lu]\arrow[luuuu]\arrow[uuu]
    \end{tikzcd}
\end{equation}
We show that the inclusion~\eqref{diag:anotherPrism} belongs to the weakly saturated class generated by all $\epsilon$-horns. As the first step we fill $\Sim^{\{0,1',2'\}}$, and then $\Sim^{\{0,0',1',2'\}}$. Next, we fill $\Sim^{\{0,1,2'\}}$ and $\Sim^{\{0,1,2,2'\}}$ (recall $\Delta^{\{0,1,2\}}=\Delta^{\{0',1',2'\}}$). 
It remains to fill the last $\epsilon$-horn to obtain $\Sim^{\{0,1,1',2'\}}$. This finishes the whole proof.
\end{proof}
\begin{corollary}
    Let $E, F\in\PEA$. Then there exists cancellative $G\in\RelFA$ such that $N(G)$ is isomorphic to the $\epsilon$-simplicial set $X=[N(E),N(F)]$. 
    Moreover, the vertices of $X$ are in bijection with the partial monoid morphisms $E\rightarrow F$ and there is an edge between two vertices if and only if the corresponding morphisms are conjugate.
 \end{corollary}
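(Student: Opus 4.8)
The plan is to run the two-step strategy announced just before Proposition~\ref{prop:smallFun}: first treat $[N(\underline1),N(F)]$, and then transport the conclusion along the fibration of Corollary~\ref{cor:KanFib} up to $[N(E),N(F)]$. Concretely, to exhibit the Frobenius algebra $G$ I will check that $X:=[N(E),N(F)]$ satisfies the three conditions of Theorem~\ref{thm:char} and, in addition, that $\Horn_{\geq 3}\boxslash X$, which is what forces $G$ to be cancellative. Throughout I identify $N(\underline1)$ with $\Sim^1_{\mathrm{red}}$: the initial object $\underline1$ has underlying simplicial set a single vertex with one non-degenerate loop, and that loop is the $\epsilon$-edge.

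For the base case set $X_0:=[\Sim^1_{\mathrm{red}},N(F)]$. By the internal-hom adjunction, ``$X_0$ has the (unique) RLP against $f$'' is equivalent to ``$N(F)$ has the (unique) RLP against $f\boxprod(\emptyset\subset\Sim^1_{\mathrm{red}})$''. Since $F\in\PEA$, its nerve $N(F)$ is a cancellative braided Frobenius algebra, so it lifts against every generator of the class $\mathcal L_2$ of Proposition~\ref{prop:smallFun} (the $\epsilon$-horns, the cells $\partial\Delta^n\subset\Delta^n$ with $n\geq3$, associativity, $\Horn_{\geq3}$, and braiding). Proposition~\ref{prop:smallFun} tells us that $f\boxprod(\emptyset\subset\Sim^1_{\mathrm{red}})$ stays inside $\mathcal L_2$, so $X_0$ inherits the RLP against all those generators; by Theorem~\ref{thm:char} we get $X_0\cong N(G_0)$, with $G_0$ cancellative because $\Horn_{\geq3}\boxslash X_0$.

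To lift to $X$, I would invoke Corollary~\ref{cor:KanFib}: the evaluation $p\colon X\to X_0$ is a minimal Kan fibration that also has the unique RLP against the cells $\partial\Delta^n$ ($n\geq2$) and against $\Delta^1\subset\Sim^1$. First I would note that $p$ then has the (unique) RLP against every $\epsilon$-horn: for $n\geq2$ the inclusion $\ELambda^n_i\subset\Sim^n$ is a pushout of the ordinary horn $\Lambda^n_i\subset\Delta^n$ (one merely marks the already-present edge $\Sim^{\{0,n\}}$), and $\ELambda^1_i\subset\Sim^1$ factors as $\Lambda^1_i\subset\Delta^1$ followed by $\Delta^1\subset\Sim^1$. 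A two-out-of-three diagram chase then transfers each lifting property from $X_0$ and $p$ to $X$: solve the lifting problem in $X_0$ first, then lift through $p$; running the same chase with the uniqueness clauses inserted yields unique lifts for the $\epsilon$-horns and cells. Theorem~\ref{thm:char} now gives $X\cong N(G)$, and $\Horn_{\geq3}\boxslash X$ makes $G$ cancellative. The subtle point --- and the part I expect to require the most care --- is uniqueness rather than existence: the saturated classes $\mathcal L_i$ record only plain RLP of $N(F)$, whereas Theorem~\ref{thm:char} asks for \emph{unique} lifts of $\epsilon$-horns and cells. I would resolve this by locating the relevant box products in the smaller saturated classes against which $N(F)$ lifts uniquely --- the $\epsilon$-horn box products lie in the class generated by the $\epsilon$-horns alone (as the proof of Proposition~\ref{prop:smallFun} shows), while $\Sim^1_{\mathrm{red}}\times\partial\Delta^n\subset\Sim^1_{\mathrm{red}}\times\Delta^n$ contains the full $(n-1)$-skeleton and so lies in $\overline{\Cell_{\geq n}}$ by Lemma~\ref{lem:fillingCells}; $N(F)$, as a Frobenius algebra, lifts uniquely against both.

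For the \emph{moreover} clause I would argue exactly as in Theorem~\ref{thm:mappingSpace}. A vertex of $X$ is a map $\Delta^0\times N(E)\to N(F)$, and since $\Delta^0\times N(E)\cong N'(E)$ carries no $\epsilon$-edges these are precisely the partial monoid morphisms $E\to F$. An edge $H\colon\Delta^1\times N(E)\to N(F)$ restricts to vertices $h_0,h_1$ and, reading Diagram~\eqref{diag:internalMap} with $b=H(\Delta^1\times\{v\})$, satisfies $b\oplus h_0(a)=h_1(a)\oplus b$ for all $a$, i.e. $h_0$ and $h_1$ are conjugate; conversely Lemma~\ref{lem:conjugate} together with the prism-filling of Diagram~\eqref{diag:prism} builds such an edge from any conjugator $b\in[0,h_0(1)^-]$. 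Hence edges correspond to conjugations, and two vertices are joined by an edge exactly when the corresponding morphisms are conjugate.
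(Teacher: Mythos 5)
Your proposal follows essentially the same route as the paper's proof: establish the lifting properties of $[N(\underline{1}),N(F)]$ via the internal-hom adjunction and Proposition~\ref{prop:smallFun}, transfer them to $[N(E),N(F)]$ through the fibration $p$ of Corollary~\ref{cor:KanFib}, conclude with Theorem~\ref{thm:char}, and read off the \emph{moreover} clause from Section~\ref{sec:5} and Lemma~\ref{lem:conjugate}. One small caveat on your uniqueness discussion: the proof of Proposition~\ref{prop:smallFun} places the $\epsilon$-horn box products in the class generated by $\epsilon$-horns only for $n\leq 2$, while for $n\geq 3$ they land in $\overline{\mathrm{Horn}}_{\geq 3}$, so you also need that $N(F)$ lifts \emph{uniquely} against ordinary horns --- which does hold for the nerve of a cancellative partial monoid (2-simplices are determined by their boundaries, and higher cells by coskeletality) and is implicitly assumed by the paper as well.
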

 \begin{proof}
    The interpretation of vertices and edges of $[N(E),N(F)]$ is clear (see Section~\ref{sec:5}). Let $\mathcal{L}_2$ be the class of morphisms in $\eSSet$ from Proposition~\ref{prop:bigFun}. We need to show that $\mathcal{L}_2\boxslash [N(E),N(F)]$.
    
    Let $f\colon A\hookrightarrow B$ be one of the generating morphism of  $\mathcal{L}_2$ and consider the following diagram:
    \begin{equation}\label{diag:lastLift}
        \begin{tikzcd}[row sep=15pt, column sep=15pt]
            A\arrow[dd,"f"]\arrow[r]&{[N(E),N(F)]}\arrow[d,"p"]\\
            &{[\underline{1},N(F)]}\arrow[d]\\
            B\arrow[r]&\underline{0}\\
        \end{tikzcd}
    \end{equation}
By Proposition~\ref{prop:smallFun}, we obtain a unique lift $l'\colon B\rightarrow [N(\underline{1}),N(F)]$. 

One can go through all the generators of $\mathcal{L}_2$ and easily check that the morphism $f$ belongs to the weakly saturated class generated by $\mathrm{Horn}$, $\mathrm{Cell}_{\geq 2}$, and $\Delta^1\subset\Sim^1$. Hence, Corollary~\ref{cor:KanFib} together with Theorem~\ref{thm:char} imply the desired property.
 \end{proof}
\begin{corollary}
    There is a $\RelFA$-enriched category $\PEA$, whose objects are pseudo effect algebras and the underlying category $\PEA_0$ is isomorphic to the ordinary category of pseudo effect algebras. 
\end{corollary}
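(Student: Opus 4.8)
The plan is to reproduce the formal argument of Theorem~\ref{thm:enrichedEA}, now in the non-commutative setting, with the preceding corollary furnishing the one substantive input: that every mapping space $[N(E),N(F)]$ lies in the image of the nerve $N$. Since $\SSet^\epsilon$ is a presheaf category it is cartesian closed, hence enriched over itself; restricting this self-enrichment to the full subcategory spanned by $\{N(E)\mid E\in\PEA\}$ and relabelling $N(E)$ by $E$ exhibits the pseudo effect algebras as a category enriched over $(\SSet^\epsilon,\times,\Sim^0)$, with hom-objects $[N(E),N(F)]$. The whole task is therefore to show that this $\SSet^\epsilon$-enrichment descends along the embedding $N\colon\RelFA\hookrightarrow\SSet^\epsilon$.

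For the descent I would invoke the preceding corollary, which for each pair $E,F$ supplies a (cancellative) Frobenius algebra that we take as the hom-object $\PEA(E,F)$, with $N(\PEA(E,F))\cong[N(E),N(F)]$. Because $N$ preserves finite products (Proposition~\ref{prop:monoidalFunctor}), each composition map
\[
[N(F),N(K)]\times[N(E),N(F)]\longrightarrow[N(E),N(K)]
\]
is a morphism $N\big(\PEA(F,K)\times\PEA(E,F)\big)\to N(\PEA(E,K))$ between objects in the image of $N$, hence by full faithfulness the image of a unique $\RelFA$-morphism; the same applies to the identity maps $\Sim^0=N(\underline 0)\to N(\PEA(E,E))$. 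Full faithfulness further forces the associativity and unit coherence diagrams to commute in $\RelFA$, since their $N$-images are exactly the (commuting) coherence diagrams of the $\SSet^\epsilon$-enrichment. This produces the desired $\RelFA$-enriched category $\PEA$.

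For the underlying category, by definition
\[
\mathrm{Hom}_{\PEA_0}(E,F)=\mathrm{Hom}_{\RelFA}(\underline 0,\PEA(E,F)),
\]
which by full faithfulness equals $\mathrm{Hom}_{\SSet^\epsilon}(\Sim^0,[N(E),N(F)])$. As $\Sim^0$ carries a marked degenerate edge, such a map is a vertex $v$ of $[N(E),N(F)]$ whose degenerate loop $s_0(v)$ is an $\epsilon$-edge. By the preceding corollary the vertex $v$ is a partial monoid morphism $f\colon E\to F$, and by Lemma~\ref{lem:conjugate} the edges emanating from $v$ are indexed by $b\in[0,f(1)^{-}]$, with $s_0(v)$ corresponding to $b=0$. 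The $\epsilon$-preservation constraint inherited from the marked diagonal $\Sim^1\times\{1_E\}$ of the domain forces the unique $\epsilon$-edge out of $v$ to satisfy $b\oplus f(1)=1$, i.e.\ $b=f(1)^{-}$; hence $s_0(v)$ is an $\epsilon$-edge exactly when $f(1)^{-}=0$, that is $f(1_E)=1_F$. A partial monoid morphism with $f(1_E)=1_F$ preserves orthosupplements (by uniqueness of $a^{-},a^{\sim}$ and cancellation), so it is precisely a morphism of $\PEA$; and composition in $\PEA_0$ is composition of partial monoid morphisms, whence $\PEA_0$ is isomorphic to the ordinary category of pseudo effect algebras.

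I expect the only genuinely non-formal step to be this last identification --- that $s_0(v)$ is an $\epsilon$-edge if and only if $f$ preserves the top element --- since it requires combining the indexing of conjugation edges by $[0,f(1)^{-}]$ from Lemma~\ref{lem:conjugate} with the $\epsilon$-preservation constraint inherited from the marked diagonal over $1_E$. Everything else transfers mechanically from the commutative case once the preceding corollary is in hand.
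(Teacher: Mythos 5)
Your proposal is correct and takes essentially the same approach as the paper: the paper's proof is literally ``analogous to Theorem~\ref{thm:enrichedEA}'', i.e.\ restrict the $\SSet^\epsilon$ self-enrichment along the fully faithful monoidal nerve $N$, using the preceding corollary to know each $[N(E),N(F)]$ lies in the image of $N$, and then identify maps $\Sim^0\to[N(E),N(F)]$ with top-preserving partial monoid morphisms. Your explicit verification that $s_0(v)$ is an $\epsilon$-edge iff $f(1_E)=1_F$, via Lemma~\ref{lem:conjugate} and the marked diagonal over $1_E$, is exactly the non-commutative analogue of the argument the paper gives for effect algebras in Theorems~\ref{thm:mappingSpace} and~\ref{thm:enrichedEA}.
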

\begin{proof}
    The proof is analogous to that of Theorem~\ref{thm:enrichedEA}.
\end{proof}
\section*{Conclusion}

In this work we have shown that $\epsilon$-simplicial sets provide a convenient framework for studying (pseudo) effect algebras. 
Among our main contributions, we highlight a stability result: if $N(F)$ satisfies certain lifting properties relevant to 
pseudo effect algebras, then the mapping space $[N(E),N(F)]$ inherits the same lifting properties, provided suitable conditions 
on $N(E)$ are met. Such stability phenomena are fundamental in the theory of quasi-categories and quasi-groupoids, and our results 
suggest that various 
theorems from the theory of quasi-categories and quasi-groupoids admit analogues in the context of Frobenius algebras.  

In future work, we plan to investigate mapping spaces $[A,B]$ associated with broader classes of Frobenius algebras in $\Rel$. 
To give a concrete question, consider the following situation: let $\mathcal{L}$ be a weakly saturated class of morphisms in $\SSet$ 
generated by inner horn inclusions (trivial cofibrations in the Joyal model structure for quasi-categories), 
and consider $f\colon A\to B\in \mathcal{L}$ and $g\colon C\to D\in \mathcal{L}^\boxslash$. The induced morphism
\[
p\colon [B,C]\;\longrightarrow\; [A,C]\times_{[A,D]}[B,D]
\]
has the interpretation that the codomain parametrizes lifting problems involving $(f,g)$, while the domain parametrizes their solutions.
 The fact that each lifting problem admits a solution, i.e.\ our assumption $f \boxslash g$, is internalised in Joyal's result (see \cite{L}, Cor.~2.3.2.5) that
 $p$ is a trivial Kan fibration. An interesting question is which lifting properties are satisfied by $p$ when $\mathcal{L}$ is a weakly
 saturated class generated by cofibrations encoding various classes of Frobenius algebras.

 \end{document}